\makeatletter

\documentclass[
    11pt,
    a4paper,
    oneside,
    openright,
    center,
    chapterbib,
    crosshair,
    reqno,
    headcount,
    headline,
    indent,
    indentfirst=false,
    portrait,
    phonetic,
    oldernstyle,
    onecolumn,
    sfbold,
    upper,
]{amsart}

\let\th@plain\relax

\PassOptionsToPackage{T2A}{fontenc} 
\PassOptionsToPackage{utf8}{inputenc} 
\PassOptionsToPackage{cyrpart}{cyrillic}
\PassOptionsToPackage{british,ngerman,russian}{babel}
\PassOptionsToPackage{
    foot,
}{amsaddr}
\PassOptionsToPackage{
    style=numeric-comp,
    citestyle=authoryear,
}{biblatex}
\PassOptionsToPackage{
    english,
    ngerman,
    russian,
    capitalise,
}{cleveref}
\PassOptionsToPackage{
    margin=10pt,
    position=bottom,
    justification=centering,
    font=small,
    labelformat=simple,
    labelfont={sc},
    labelsep={period},
    textfont={it},
}{caption}
\PassOptionsToPackage{
    margin=10pt,
    position=bottom,
    justification=centering,
    font=small,
    labelformat=parens,
    labelfont={bf},
    labelsep={space},
    textfont={it},
}{subcaption}
\PassOptionsToPackage{framemethod=TikZ}{mdframed}
\PassOptionsToPackage{
    bookmarks=true,
    bookmarksopen=false,
    bookmarksopenlevel=0,
    bookmarkstype=toc,
    raiselinks=true,
    colorlinks=true,
    hyperfigures=true,
    anchorcolor=red,
    citecolor=green,
    linkcolor=blue,
    urlcolor=blue,
    hypertexnames=false, 
}{hyperref} 
\PassOptionsToPackage{normalem}{ulem}
\PassOptionsToPackage{
    amsmath,
    thmmarks,
}{ntheorem}
\PassOptionsToPackage{
}{thmtools}
\PassOptionsToPackage{
    table,
}{xcolor}
\PassOptionsToPackage{
    all,
    color,
    curve,
    frame,
    import,
    knot,
    line,
    movie,
    rotate,
    textures,
    tile,
    tips,
    web,
    xdvi,
}{xy}
\PassOptionsToPackage{
    reset,
    left=3cm,
    right=3cm,
    top=20mm,
    bottom=20mm,
    heightrounded,
}{geometry}
\PassOptionsToPackage{
    symbol*, 
    multiple, 
}{footmisc}
\PassOptionsToPackage{overload}{textcase}
\PassOptionsToPackage{indentafter}{titlesec}
\PassOptionsToPackage{
    shortcuts, 
}{extdash}
\PassOptionsToPackage{
    algo2e,
    english,
    onelanguage,
    algosection,
    procnumbered,
    ruled,
    vlined,
    linesnumbered,
}{algorithm2e}

\usepackage{amsaddr}
\usepackage{amsfonts}
\usepackage{amsmath}
\usepackage{amssymb}
\usepackage{ntheorem} 
\usepackage{thmtools} 
\usepackage{array}
\usepackage{babel}
\usepackage{braket}
\usepackage{algorithm2e}
\usepackage{bbding}
\usepackage{bm}
\usepackage{bbm}
\usepackage{bibentry}
\usepackage{booktabs}
\usepackage{bold-extra} 
\usepackage{calc}
\usepackage{cancel}
\usepackage{caption} 
\usepackage{changepage}
\usepackage{cjhebrew}
\usepackage{cmlgc}
\usepackage{colonequals}
\usepackage{color}
\usepackage{comment}
\usepackage{datetime}
\usepackage{dsfont}
\usepackage{etex}
\usepackage{etoolbox}
\usepackage{eurosym}
\usepackage{extdash}
\usepackage{fancybox}
\usepackage{fancyhdr}
\usepackage{float}
\usepackage{fontenc}
\usepackage{footmisc}
\usepackage{fp}
\usepackage{geometry}
\usepackage{graphicx}
\usepackage{ifpdf}
\usepackage{ifthen}
\usepackage{ifoddpage}
\usepackage{ifnextok}  
\usepackage{inputenc}
\usepackage{latexsym}
\usepackage{lineno}
\usepackage{listings}
\usepackage{longtable}
\usepackage{lscape}
\usepackage{mathtools} 
\usepackage{mathrsfs}
\usepackage{multicol}
\usepackage{multirow}
\usepackage{nameref}
\usepackage{nowtoaux}
\usepackage{paralist}
\usepackage{enumerate} 
\usepackage{enumitem} 
\usepackage{pgf}
\usepackage{pgfplots}
\usepackage{phonetic}
\usepackage{proof}
\usepackage{qtree}
\usepackage{refcount}
\usepackage{savesym}
\usepackage{stmaryrd}
\usepackage{synttree}
\usepackage{subcaption}
\usepackage{suffix}
\usepackage{yfonts} 
\usepackage{textcase} 
\usepackage{tikz}
\usepackage{xy}
\usepackage{ulem} 
\usepackage{wrapfig}
\usepackage{xcolor}
\usepackage{xspace}
\usepackage{xstring}
\usepackage{arydshln}
\usepackage{hyperref}
\usepackage{cleveref} 

\pgfplotsset{compat=newest}
\usetikzlibrary{math}

\usetikzlibrary{
    angles,
    arrows,
    automata,
    calc,
    decorations,
    decorations.pathmorphing,
    decorations.pathreplacing,
    positioning,
    patterns,
    patterns.meta,
    quotes,
}

\savesymbol{corresponds}
\savesymbol{Diamond}
\savesymbol{emptyset}
\savesymbol{ggg}
\savesymbol{int}
\savesymbol{lll}
\savesymbol{RectangleBold}
\savesymbol{langle}
\savesymbol{rangle}
\savesymbol{hookrightarrow}
\savesymbol{hookleftarrow}
\savesymbol{Asterisk}
\usepackage{mathabx}
\usepackage{wasysym}

\restoresymbol{x}{corresponds}
\restoresymbol{x}{Diamond}
\restoresymbol{x}{emptyset}
\restoresymbol{x}{ggg}
\restoresymbol{x}{int}
\restoresymbol{x}{lll}
\restoresymbol{x}{RectangleBold}
\restoresymbol{x}{langle}
\restoresymbol{x}{rangle}
\restoresymbol{x}{hookrightarrow}
\restoresymbol{x}{hookleftarrow}
\restoresymbol{x}{Asterisk}

\ifpdf
    \usepackage{pdfcolmk}
\fi

\usepackage{mdframed}

\DeclareFontFamily{U}{MnSymbolA}{}
\DeclareFontShape{U}{MnSymbolA}{m}{n}{
    <-6> MnSymbolA5
    <6-7> MnSymbolA6
    <7-8> MnSymbolA7
    <8-9> MnSymbolA8
    <9-10> MnSymbolA9
    <10-12> MnSymbolA10
    <12-> MnSymbolA12
}{}
\DeclareFontShape{U}{MnSymbolA}{b}{n}{
    <-6> MnSymbolA-Bold5
    <6-7> MnSymbolA-Bold6
    <7-8> MnSymbolA-Bold7
    <8-9> MnSymbolA-Bold8
    <9-10> MnSymbolA-Bold9
    <10-12> MnSymbolA-Bold10
    <12-> MnSymbolA-Bold12
}{}
\DeclareSymbolFont{MnSymA}{U}{MnSymbolA}{m}{n}
\DeclareMathSymbol{\lcirclearrowright}{\mathrel}{MnSymA}{252}
\DeclareMathSymbol{\lcirclearrowdown}{\mathrel}{MnSymA}{255}
\DeclareMathSymbol{\rcirclearrowleft}{\mathrel}{MnSymA}{250}
\DeclareMathSymbol{\rcirclearrowdown}{\mathrel}{MnSymA}{251}
\DeclareFontFamily{U}{MnSymbolC}{}
\DeclareSymbolFont{MnSyC}{U}{MnSymbolC}{m}{n}
\DeclareFontShape{U}{MnSymbolC}{m}{n}{
    <-6>  MnSymbolC5
    <6-7>  MnSymbolC6
    <7-8>  MnSymbolC7
    <8-9>  MnSymbolC8
    <9-10> MnSymbolC9
    <10-12> MnSymbolC10
    <12->   MnSymbolC12%
}{}
\DeclareMathSymbol{\powerset}{\mathord}{MnSyC}{180}
\DeclareMathSymbol{\righthalfcap}{\mathbin}{MnSyC}{186}

\DeclareMathAlphabet{\mathpzc}{OT1}{pzc}{m}{it}
\DeclareMathAlphabet{\blackboardfont}{U}{BOONDOX-ds}{m}{n}

\let\braket\relax
\let\ketbra\relax
\DeclarePairedDelimiterX{\braket}[2]{\langle}{\rangle}{#1\,\delimsize\vert\,\mathopen{}#2}
\DeclarePairedDelimiterX{\ketbra}[2]{\lvert}{\rvert}{#1\delimsize\rangle\!\delimsize\langle#2}

\def\boolwahr{true}
\def\boolfalsch{false}
\def\boolleer{}

\let\boolinappendix\boolfalsch
\let\boolinmdframed\boolfalsch

\newcount\bufferctr
\newcount\bufferreplace

\newlength\rtab
\newlength\gesamtlinkerRand
\newlength\gesamtrechterRand
\newlength\ownspaceabovethm
\newlength\ownspacebelowthm
\newlength\aboveequation
\newlength\belowequation
\def\secnumberingpt{.}
\def\secnumberingseppt{.}
\def\subsecnumberingseppt{}
\def\thmnumberingpt{.}
\def\thmnumberingseppt{}
\def\thmForceSepPt{.}

\definecolor{leer}{gray}{1}
\definecolor{boxgrau}{gray}{0.85}
\definecolor{dunkelgrau}{gray}{0.5}
\definecolor{maroon}{rgb}{0.6901961,0.1882353,0.3764706}
\definecolor{dunkelgruen}{rgb}{0.015625,0.363281,0.109375}
\definecolor{dunkelrot}{rgb}{0.5450980392,0,0}
\definecolor{dunkelblau}{rgb}{0,0,0.5450980392}
\definecolor{blau}{rgb}{0,0,1}
\definecolor{newresult}{rgb}{0.6,0.6,0.6}
\definecolor{improvedresult}{rgb}{0.9,0.9,0.9}
\definecolor{hervorheben}{rgb}{0,0.9,0.7}
\definecolor{starkesblau}{rgb}{0.1019607843,0.3176470588,0.8156862745}
\definecolor{achtung}{rgb}{1,0.5,0.5}
\definecolor{frage}{rgb}{0.5,1,0.5}
\definecolor{schreibweise}{rgb}{0,0.7,0.9}
\definecolor{axiom}{rgb}{0,0.3,0.3}
\definecolor{drawing_light_grey}{gray}{0.85}
\definecolor{background_light_grey}{gray}{0.95}

\def\let@name#1#2{
    \expandafter\let\csname #1\expandafter\endcsname\csname #2\endcsname\relax
}
\DeclareRobustCommand\crfamily{\fontfamily{ccr}\selectfont}
\DeclareTextFontCommand{\textcr}{\crfamily}

\def\ifthenelseleer#1#2#3{\ifthenelse{\equal{#1}{}}{#2}{#1#3}}
\def\bedingtesspaceexpand#1#2#3{\ifthenelseleer{\csname #1\endcsname}{#3}{#2#3}}

\def\hraum{\null\hfill\null}

\def\nvraum{\@ifnextchar\bgroup{\nvraum@c}{\nvraum@bes}}
    \def\nvraum@c#1{\vspace*{-#1\baselineskip}}
    \def\nvraum@bes{\vspace*{-\baselineskip}}
\def\forceaddspace{\relax\ifmmode\else\@\xspace\fi}
\def\forceremovespace{\relax\ifmmode\else\expandafter\@gobble\fi}

\def\send@toaux#1{\@bsphack\protected@write\@auxout{}{\string#1}\@esphack}

\def\rlabel#1[#2]#3#4#5{#5\rlabel@aux{#1}[#2]{#3}{#4}{#5}}
    \def\rlabel@aux#1[#2]#3#4#5{%
        \send@toaux{\newlabel{#1}{{\@currentlabel}{\thepage}{{\unexpanded{#5}}}{#2.\csname the#2\endcsname}{}}}\relax%
    }

\def\tag@rawscheme#1#2[#3]#4#5{\@ifnextchar[{\tag@rawscheme@{#1}{#2}[#3]{#4}{#5}}{\tag@rawscheme@{#1}{#2}[#3]{#4}{#5}[*]}}
    \def\tag@rawscheme@#1#2[#3]#4#5[#6]{\@ifnextchar\bgroup{\tag@rawscheme@@{#1}{#2}[#3]{#4}{#5}[#6]}{\tag@rawscheme@@{#1}{#2}[#3]{#4}{#5}[#6]{}}}
    \def\tag@rawscheme@@#1#2[#3]#4#5[#6]#7{%
        \ifthenelse{\equal{#6}{*}}{%
            \ifthenelse{\equal{#7}{\boolleer}}{\refstepcounter{#3}#4\csname the#3\endcsname#5}{#4#7#5}%
        }{%
            \refstepcounter{#3}#4%
            \ifthenelse{\equal{#7}{\boolleer}}{\rlabel{#6}[#3]{#1}{#2}{\csname the#3\endcsname}}{\rlabel{#6}[#3]{#1}{#2}{#7}}%
            #5%
        }%
    }
\def\tag@scheme#1#2[#3]{\tag@rawscheme{#1}{#2}[#3]{\upshape(}{\upshape)}}

\def\eqtag@post#1{\makebox[0pt][r]{#1}}
\def\eqtag@pre{\tag@scheme{Eq}{Equation}[equation]}
\def\eqtag{\@ifnextchar[{\eqtag@}{\eqtag@[*]}}
    \def\eqtag@[#1]{\@ifnextchar\bgroup{\eqtag@@[#1]}{\eqtag@@[#1]{}}}
    \def\eqtag@@[#1]#2{\eqtag@post{\eqtag@pre[#1]{#2}}}

\def\eqcref#1{\text{(\ref{#1})}}
\def\punktlabel#1{\label{it:#1:\beweislabel}}
\def\punktcref#1{\eqcref{it:#1:\beweislabel}}

\def\opfromto[#1]_#2^#3{\underset{#2}{\overset{#3}{#1}}}
\def\textoverset#1#2{\overset{\text{#1}}{#2}}

\def\eqcrefoverset#1#2{\textoverset{\eqcref{#1}}{#2}}

\def\mathclap#1{#1}
\def\oberunterset#1{\@ifnextchar^{\oberunterset@oben{#1}}{\oberunterset@unten{#1}}}
    \def\oberunterset@oben#1^#2_#3{\underset{\mathclap{#3}}{\overset{\mathclap{#2}}{#1}}}
    \def\oberunterset@unten#1_#2^#3{\underset{\mathclap{#2}}{\overset{\mathclap{#3}}{#1}}}
    \def\breitunderbrace#1_#2{\underbrace{#1}_{\mathclap{#2}}}
    \def\breitoverbrace#1^#2{\overbrace{#1}^{\mathclap{#2}}}
    \def\breitunderbracket#1_#2{\underbracket{#1}_{\mathclap{#2}}}
    \def\breitoverbracket#1^#2{\overbracket{#1}^{\mathclap{#2}}}

\def\generatenestedsecnumbering#1#2#3{%
    \expandafter\gdef\csname thelong#3\endcsname{%
        \expandafter\csname the#2\endcsname%
        \secnumberingpt%
        \expandafter\csname #1\endcsname{#3}%
    }%
    \expandafter\gdef\csname theshort#3\endcsname{%
        \expandafter\csname #1\endcsname{#3}%
    }%
}
\def\generatenestedthmnumbering#1#2#3{%
    \expandafter\gdef\csname the#3\endcsname{%
        \expandafter\csname the#2\endcsname%
        \thmnumberingpt%
        \expandafter\csname #1\endcsname{#3}%
    }%
    \expandafter\gdef\csname theshort#3\endcsname{%
        \expandafter\csname #1\endcsname{#3}%
    }%
}

\providecommand{\setcounternach}{}
\renewcommand{\setcounternach}[2]{\setcounter{#1}{#2}\addtocounter{#1}{-1}}
\providecommand{\textsubscript}{}
\renewcommand{\textsubscript}[1]{${}_{\textup{#1}}$}

\def\forcepunkt#1{#1\IfEndWith{#1}{.}{}{.}}

\def\matrix#1{\left(\begin{array}{#1}}
    \def\endmatrix{\end{array}\right)}
\def\smatrix{\left(\begin{smallmatrix}}
    \def\endsmatrix{\end{smallmatrix}\right)}

\def\multiargrekursiverbefehl#1#2#3#4#5#6#7#8{%
    \expandafter\gdef\csname#1\endcsname #2##1#4{\csname #1@anfang\endcsname##1#3\egroup}
    \expandafter\def\csname #1@anfang\endcsname##1#3{#5##1\@ifnextchar\egroup{\csname #1@ende\endcsname}{#7\csname #1@mitte\endcsname}}
    \expandafter\def\csname #1@mitte\endcsname##1#3{#6##1\@ifnextchar\egroup{\csname #1@ende\endcsname}{#7\csname #1@mitte\endcsname}}
    \expandafter\def\csname #1@ende\endcsname##1{#8}
}
\multiargrekursiverbefehl{svektor}{[}{;}{]}{\begin{smatrix}}{}{\\}{\\\end{smatrix}}
\multiargrekursiverbefehl{vektor}{[}{;}{]}{\begin{matrix}{c}}{}{\\}{\\\end{matrix}}
\multiargrekursiverbefehl{vektorzeile}{}{,}{;}{}{&}{}{}
\multiargrekursiverbefehl{matlabmatrix}{[}{;}{]}{\begin{smatrix}\vektorzeile}{\vektorzeile}{;\\}{;\end{smatrix}}

\def\BeweisRichtung[#1]{\@ifnextchar\bgroup{\@BeweisRichtung@c[#1]}{\@BeweisRichtung@bes[#1]}}
    \def\@BeweisRichtung@bes[#1]{{\bfseries (#1)}}
    \def\@BeweisRichtung@c[#1]#2#3{#2~#1~#3}
\def\erzeugeBeweisRichtungBefehle#1#2{
    \expandafter\gdef\csname #1text\endcsname##1##2{\BeweisRichtung[#2]{##1}{##2}}
    \expandafter\gdef\csname #1\endcsname{%
        \@ifnextchar\bgroup{\csname #1@\endcsname}{\csname #1text\endcsname{}{}}%
    }
    \expandafter\gdef\csname #1@\endcsname##1##2{%
        \csname #1text\endcsname{\punktcref{##1}}{\punktcref{##2}}%
    }
}
\erzeugeBeweisRichtungBefehle{hinRichtung}{$\Rightarrow$}
\erzeugeBeweisRichtungBefehle{herRichtung}{$\Leftarrow$}
\erzeugeBeweisRichtungBefehle{hinherRichtung}{$\Leftrightarrow$}

\def\cal#1{\mathcal{#1}}
\def\mathfrak#1{\mbox{\usefont{U}{euf}{m}{n}#1}}

\def\rectangleblack{\text{\RectangleBold}}

\def\squareblack{\blacksquare}

\def\create@abbreviation#1#2{
    \expandafter\gdef\csname #1\endcsname{%
        #2\@ifnextchar.{%
            \relax\ifmmode\else\expandafter\@gobble\fi%
        }{%
            \relax\ifmmode\else\@\xspace\fi%
        }%
    }
}

\create@abbreviation{cf}{\emph{cf.}}
\create@abbreviation{Cf}{\emph{Cf.}}
\create@abbreviation{thatis}{\emph{i.e.}}
\create@abbreviation{idest}{\emph{i.e.}}
\create@abbreviation{Idest}{\emph{I.e.}}
\create@abbreviation{exempli}{\emph{e.g.}}
\create@abbreviation{Exempli}{\emph{E.g.}}
\create@abbreviation{etcetera}{\emph{etc.}}
\create@abbreviation{etAlia}{\emph{et al.}}
\create@abbreviation{andothers}{\emph{et al.}}
\create@abbreviation{initself}{\emph{per se}}
\create@abbreviation{perse}{\emph{per se}}
\create@abbreviation{namely}{\emph{viz.}}
\create@abbreviation{viz}{\emph{viz.}}

\create@abbreviation{Tfae}{T.\,f.\,a.\,e.}
\create@abbreviation{tfae}{t.\,f.\,a.\,e.}
\create@abbreviation{wrt}{wrt.}
\create@abbreviation{randomvar}{r.\,v.}
\create@abbreviation{iid}{i.\,i.\,d.}
\create@abbreviation{almostEverywhere}{a.\,e.}
\create@abbreviation{almostSurely}{a.\,s.}
\create@abbreviation{withoutlog}{w.\,l.\,o.\,g.}
\create@abbreviation{Withoutlog}{W.\,l.\,o.\,g.}
\create@abbreviation{resp}{resp.}
\create@abbreviation{posdef}{p.d.}
\create@abbreviation{akin}{\`{a}~la}

\def\crefname@full#1#2#3#4#5{%
    \crefname{#1}{#2}{#3}
    \Crefname{#1}{#4}{#5}
}
\def\crefname@fullmod#1#2#3#4#5{%
    \crefname@full{#1}{#2}{#3}{#4}{#5}
    \crefname@full{#1@basic}{#2}{#3}{#4}{#5}
    \crefname@full{#1@withName}{#2}{#3}{#4}{#5}
}
\crefname@full{algorithm}{algorithm}{algorithms}{Algorithm}{Algorithms}
\crefname@full{algorithm2e}{algorithm}{algorithms}{Algorithm}{Algorithms}
\crefname@full{chapter}{chapter}{chapters}{Chapter}{Chapters}
\crefname@full{appendix}{appendix}{appendices}{Appendix}{Appendices}
\crefname@full{section}{section}{sections}{Section}{Sections}
\crefname@full{subsection}{section}{sections}{Section}{Sections}
\crefname@full{subsubsection}{section}{sections}{Section}{Sections}
\crefname@full{subsubsubsection}{section}{sections}{Section}{Sections}
\crefname@full{table}{table}{tables}{Table}{Tables}
\crefname@full{figure}{figure}{figures}{Figure}{Figures}
\crefname@full{subfigure}{figure}{figures}{Figure}{Figures}

\crefname@fullmod{thm}{theorem}{theorems}{Theorem}{Theorems}
\crefname@fullmod{thmStar}{theorem}{theorems}{Theorem}{Theorems}
\crefname@fullmod{conj}{conjecture}{conjectures}{Conjecture}{Conjectures}
\crefname@fullmod{cor}{corollary}{corollaries}{Corollary}{Corollaries}
\crefname@fullmod{defn}{definition}{definitions}{Definition}{Definitions}
\crefname@fullmod{conv}{convention}{conventions}{Convention}{Conventions}
\crefname@fullmod{e.g.}{example}{examples}{Example}{Examples}
\crefname@fullmod{prop}{proposition}{propositions}{Proposition}{Propositions}
\crefname@fullmod{proof}{proof}{proofs}{Proof}{Proofs}
\crefname@fullmod{lemm}{lemma}{lemmata}{Lemma}{Lemmata}
\crefname@fullmod{problem}{problem}{problems}{Problem}{Problems}
\crefname@fullmod{qstn}{question}{questions}{Question}{Questions}
\crefname@fullmod{rem}{remark}{remarks}{Remark}{Remarks}

\def\qedEIGEN#1{\@ifnextchar[{\qedEIGEN@c{#1}}{\qedEIGEN@bes{#1}}}
\def\qedEIGEN@bes#1{%
    \parfillskip=0pt
    \widowpenalty=10000
    \displaywidowpenalty=10000
    \finalhyphendemerits=0
    \leavevmode
    \unskip
    \nobreak
    \hfil
    \penalty50
    \hskip.2em
    \null
    \hfill
    #1
    \par%
}
\def\qedEIGEN@c#1[#2]{%
    \parfillskip=0pt
    \widowpenalty=10000
    \displaywidowpenalty=10000
    \finalhyphendemerits=0
    \leavevmode
    \unskip
    \nobreak
    \hfil
    \penalty50
    \hskip.2em
    \null
    \hfill
    {#1~{\small\bfseries\upshape (#2)}}%
    \par%
}
\def\qedVARIANT#1#2{
    \expandafter\def\csname ennde#1Sign\endcsname{#2}
    \expandafter\def\csname ennde#1\endcsname{\@ifnextchar[{\qedEIGEN@c{#2}}{\qedEIGEN@bes{#2}}} 
}
\qedVARIANT{OfProof}{$\squareblack$}
\qedVARIANT{OfWork}{\rectangleblack}
\qedVARIANT{OfSomething}{$\lrcorner$}
\qedVARIANT{OnNeutral}{} 

\def\ra@pretheoremwork{
    \setlength{\theorempreskipamount}{\ownspaceabovethm}
    \setlength{\theorempostskipamount}{\ownspacebelowthm}
}
\def\rathmtransfer#1#2{
    \expandafter\def\csname #2\endcsname{\csname #1\endcsname}
    \expandafter\def\csname end#2\endcsname{\csname end#1\endcsname}
}

\def\ranewthm#1#2#3[#4]{
    \theoremstyle{\current@theoremstyle}
    \theoremseparator{\current@theoremseparator}
    \theoremprework{\ra@pretheoremwork}
    \@ifundefined{#1@basic}{\newtheorem{#1@basic}[#4]{#2}}{\renewtheorem{#1@basic}[#4]{#2}}
    \theoremstyle{\current@theoremstyle}
    \theoremseparator{\thmForceSepPt}
    \theoremprework{\ra@pretheoremwork}
    \@ifundefined{#1@withName}{\newtheorem{#1@withName}[#4]{#2}}{\renewtheorem{#1@withName}[#4]{#2}}
    \theoremstyle{nonumberplain}
    \theoremseparator{\thmForceSepPt}
    \theoremprework{\ra@pretheoremwork}
    \@ifundefined{#1@star@basic}{\newtheorem{#1@star@basic}[#4]{#2}}{\renewtheorem{#1@star@basic}[#4]{#2}}
    \theoremstyle{nonumberplain}
    \theoremseparator{\thmForceSepPt}
    \theoremprework{\ra@pretheoremwork}
    \@ifundefined{#1@star@withName}{\newtheorem{#1@star@withName}[#4]{#2}}{\renewtheorem{#1@star@withName}[#4]{#2}}
    \umbauenenv{#1}{#3}[#4]
    \umbauenenv{#1@star}{#3}[#4]
    \rathmtransfer{#1@star}{#1*}
}

\def\umbauenenv#1#2[#3]{%
    \expandafter\def\csname #1\endcsname{\relax%
        \@ifnextchar[{\csname #1@\endcsname}{\csname #1@\endcsname[*]}%
    }
    \expandafter\def\csname #1@\endcsname[##1]{\relax%
        \@ifnextchar[{\csname #1@@\endcsname[##1]}{\csname #1@@\endcsname[##1][*]}%
    }
    \expandafter\def\csname #1@@\endcsname[##1][##2]{%
        \ifx*##1%
            \def\enndeOfBlock{\csname end#1@basic\endcsname}
            \csname #1@basic\endcsname%
        \else%
            \def\enndeOfBlock{\csname end#1@withName\endcsname}
            \csname #1@withName\endcsname[##1]%
        \fi%
        \def\makelabel####1{%
            \gdef\beweislabel{####1}%
            \label{\beweislabel}%
        }%
        \ifx*##2%
            \def\enndeSymbol{\qedEIGEN{#2}}
        \else%
            \def\enndeSymbol{\qedEIGEN{#2}[##2]}
        \fi
    }
    \expandafter\gdef\csname end#1\endcsname{\enndeSymbol\enndeOfBlock}
}

    \def\current@theoremstyle{plain}
    \def\current@theoremseparator{\thmnumberingseppt}
    \theoremstyle{\current@theoremstyle}
    \theoremseparator{\current@theoremseparator}
    \theoremsymbol{}

\generatenestedthmnumbering{arabic}{section}{equation}

\generatenestedthmnumbering{arabic}{section}{X}
\generatenestedthmnumbering{Roman}{section}{Xsp}

    \theoremheaderfont{\upshape\bfseries}
    \theorembodyfont{\slshape}

\ranewthm{thm}{Theorem}{\enndeOfSomethingSign}[X]
\ranewthm{lemm}{Lemma}{\enndeOfSomethingSign}[X]
\ranewthm{cor}{Corollary}{\enndeOfSomethingSign}[X]
\ranewthm{prop}{Proposition}{\enndeOfSomethingSign}[X]

    \theorembodyfont{\upshape}

\ranewthm{defn}{Definition}{\enndeOfSomethingSign}[X]
\ranewthm{conv}{Convention}{\enndeOfSomethingSign}[X]
\ranewthm{e.g.}{Example}{\enndeOfSomethingSign}[X]
\ranewthm{fact}{Fact}{\enndeOfSomethingSign}[X]
\ranewthm{problem}{Problem}{\enndeOfSomethingSign}[X]
\ranewthm{qstn}{Question}{\enndeOfSomethingSign}[X]
\ranewthm{rem}{Remark}{\enndeOfSomethingSign}[X]

    \theoremheaderfont{\itshape}
    \theorembodyfont{\upshape}

\ranewthm{proof@tmp}{Proof}{\enndeOfProofSign}[Xdisplaynone]
\rathmtransfer{proof@tmp*}{proof}

\def\shortclaim@claim{%
    \iflanguage{british}{Claim}{%
    \iflanguage{english}{Claim}{%
    \iflanguage{ngerman}{Behauptung}{%
    \iflanguage{russian}{Утверждение}{%
    Claim%
    }}}}%
}
\def\shortclaim@pf@kurz{%
    \iflanguage{british}{Pf}{%
    \iflanguage{english}{Pf}{%
    \iflanguage{ngerman}{Bew}{%
    \iflanguage{russian}{Доказательство}{%
    Pf%
    }}}}%
}
\def\shortclaim{\@ifnextchar\bgroup{\shortclaim@c}{\shortclaim@bes}}
    \def\shortclaim@c#1{\item[{\bfseries \shortclaim@claim\forceaddspace #1.}]}
    \def\shortclaim@bes{\item[{\bfseries \shortclaim@claim.}]}
\def\proofofshortclaim{\item[{\bfseries\itshape\shortclaim@pf@kurz.}]}

\newdateformat{standardshort}{\oldstylenums{\THEYEAR}.\oldstylenums{\THEMONTH}.\oldstylenums{\THEDAY}}
\newdateformat{standardcompact}{\THEYEAR\twodigit{\THEMONTH}\twodigit{\THEDAY}}
\newdateformat{standardlong}{\THEYEAR\ \monthname\ \THEDAY}
\newcolumntype{\RECHTS}[1]{>{\raggedleft}p{#1}}
\newcolumntype{\LINKS}[1]{>{\raggedright}p{#1}}
\newcolumntype{m}{>{$}l<{$}}
\newcolumntype{C}{>{$}c<{$}}
\newcolumntype{L}{>{$}l<{$}}
\newcolumntype{R}{>{$}r<{$}}
\newcolumntype{0}{@{\hspace{0pt}}}
\newcolumntype{\LINKSRAND}{@{\hspace{\@totalleftmargin}}}
\newcolumntype{h}{@{\extracolsep{\fill}}}
\newcolumntype{i}{>{\itshape}}
\newcolumntype{t}{@{\hspace{\tabcolsep}}}
\newcolumntype{q}{@{\hspace{1em}}}
\newcolumntype{n}{@{\hspace{-\tabcolsep}}}
\newcolumntype{M}[2]{%
    >{\begin{minipage}{#2}\begin{math}}%
    {#1}%
    <{\end{math}\end{minipage}}%
}
\newcolumntype{T}[2]{%
    >{\begin{minipage}{#2}}%
    {#1}%
    <{\end{minipage}}%
}
\def\punkteumgebung@genbefehl#1#2#3{
    \punkteumgebung@genbefehl@{#1}{#2}{#3}{}{}
    \punkteumgebung@genbefehl@{multi#1}{#2}{#3}{
        \setlength{\columnsep}{10pt}%
        \setlength{\columnseprule}{0pt}%
        \begin{multicols}{\thecolumnanzahl}%
    }{\end{multicols}\nvraum{1}}
}
\def\punkteumgebung@genbefehl@#1#2#3#4#5{
    \expandafter\gdef\csname #1\endcsname{
        \@ifnextchar\bgroup{\csname #1@c\endcsname}{\csname #1@bes\endcsname}
    }
        \expandafter\def\csname #1@c\endcsname##1{
            \@ifnextchar[{\csname #1@c@\endcsname{##1}}{\csname #1@c@\endcsname{##1}[\z@]}
        }
        \expandafter\def\csname #1@c@\endcsname##1[##2]{
            \@ifnextchar[{\csname #1@c@@\endcsname{##1}[##2]}{\csname #1@c@@\endcsname{##1}[##2][\z@]}
        }
        \expandafter\def\csname #1@c@@\endcsname##1[##2][##3]{
            \let\alterlinkerRand\gesamtlinkerRand
            \let\alterrechterRand\gesamtrechterRand
            \addtolength{\gesamtlinkerRand}{##2}
            \addtolength{\gesamtrechterRand}{##3}
            \advance\linewidth -##2%
            \advance\linewidth -##3%
            \advance\@totalleftmargin ##2%
            \parshape\@ne \@totalleftmargin\linewidth%
            #4
            \begin{#2}[\upshape ##1]%
                \setlength{\parskip}{0.5\baselineskip}\relax%
                \setlength{\topsep}{\z@}\relax%
                \setlength{\partopsep}{\z@}\relax%
                \setlength{\parsep}{\parskip}\relax%
                \setlength{\itemsep}{#3}\relax%
                \setlength{\listparindent}{\z@}\relax%
                \setlength{\itemindent}{\z@}\relax%
        }
        \expandafter\def\csname #1@bes\endcsname{
            \@ifnextchar[{\csname #1@bes@\endcsname}{\csname #1@bes@\endcsname[\z@]}
        }
        \expandafter\def\csname #1@bes@\endcsname[##1]{
            \@ifnextchar[{\csname #1@bes@@\endcsname[##1]}{\csname #1@bes@@\endcsname[##1][\z@]}
        }
        \expandafter\def\csname #1@bes@@\endcsname[##1][##2]{
            \let\alterlinkerRand\gesamtlinkerRand
            \let\alterrechterRand\gesamtrechterRand
            \addtolength{\gesamtlinkerRand}{##1}
            \addtolength{\gesamtrechterRand}{##2}
            \advance\linewidth -##1%
            \advance\linewidth -##2%
            \advance\@totalleftmargin ##1%
            \parshape\@ne \@totalleftmargin\linewidth%
            #4
            \begin{#2}%
                \setlength{\parskip}{0.5\baselineskip}\relax%
                \setlength{\topsep}{\z@}\relax%
                \setlength{\partopsep}{\z@}\relax%
                \setlength{\parsep}{\parskip}\relax%
                \setlength{\itemsep}{#3}\relax%
                \setlength{\listparindent}{\z@}\relax%
                \setlength{\itemindent}{\z@}\relax%
        }
    \expandafter\gdef\csname end#1\endcsname{%
        \end{#2}#5
        \setlength{\gesamtlinkerRand}{\alterlinkerRand}
        \setlength{\gesamtlinkerRand}{\alterrechterRand}
    }
}

\def\ritempunkt{{\Large \textbullet}} 
\setdefaultitem{\ritempunkt}{\ritempunkt}{\ritempunkt}{\ritempunkt}
\punkteumgebung@genbefehl{itemise}{compactitem}{\parskip}{}{}
\punkteumgebung@genbefehl{kompaktitem}{compactitem}{\z@}{}{}
\punkteumgebung@genbefehl{kompaktenum}{compactenum}{\z@}{}{}

\def\enumerate{%
    \@ifnextchar\bgroup{%
        \enumerate@legacyarg%
    }{%
        \@ifnextchar[{\enumerate@args}{\enumerate@noargs}
    }%
}
    \def\enumerate@spacing{
        \setlength{\parskip}{0.5\baselineskip}\relax%
        \setlength{\topsep}{\z@}\relax%
        \setlength{\partopsep}{\z@}\relax%
        \setlength{\parsep}{\parskip}\relax%
        \setlength{\itemsep}{\z@}\relax%
        \setlength{\listparindent}{\z@}\relax%
        \setlength{\itemindent}{\z@}\relax%
    }
    \def\enumerate@noargs{
        \begin{oldenumerate}
        \enumerate@spacing
    }
    \def\enumerate@args[#1]{
        \begin{oldenumerate}[#1]
        \enumerate@spacing
    }
    \def\enumerate@legacyarg#1{
        \begin{oldenumerate}[label=#1]
        \enumerate@spacing
    }
    \def\endenumerate{%
        \end{oldenumerate}
    }

\def\shorteqnarray{%
    \bgroup
    \setlength{\abovedisplayshortskip}{\aboveequation}%
    \setlength{\belowdisplayshortskip}{\belowequation}%
    \setlength{\abovedisplayskip}{\aboveequation - \baselineskip}%
    \setlength{\belowdisplayskip}{\belowequation}%
    \begin{eqnarray*}%
}
\def\endshorteqnarray{%
    \end{eqnarray*}%
    \egroup
}
\def\longeqnarray{%
    \bgroup%
    \allowdisplaybreaks%
    \setlength{\abovedisplayshortskip}{\aboveequation}%
    \setlength{\belowdisplayshortskip}{\belowequation}%
    \setlength{\abovedisplayskip}{\aboveequation - \baselineskip}%
    \setlength{\belowdisplayskip}{\belowequation}%
    \begin{eqnarray*}
}
\def\endlongeqnarray{%
    \end{eqnarray*}%
    \egroup%
}
\def\displayarray[#1]#2{
    \bgroup
    \everymath={\displaystyle}
    \begin{array}[#1]{#2}
}
\def\enddisplayarray{
    \end{array}
    \egroup
}

\def\matrix#1{\left(\begin{array}[mc]{#1}}
    \def\endmatrix{\end{array}\right)}
\def\smatrix{\left(\begin{smallmatrix}}
    \def\endsmatrix{\end{smallmatrix}\right)}

\def\multiargrekursiverbefehl#1#2#3#4#5#6#7#8{%
    \expandafter\gdef\csname#1\endcsname #2##1#4{\csname #1@anfang\endcsname##1#3\egroup}
    \expandafter\def\csname #1@anfang\endcsname##1#3{#5##1\@ifnextchar\egroup{\csname #1@ende\endcsname}{#7\csname #1@mitte\endcsname}}
    \expandafter\def\csname #1@mitte\endcsname##1#3{#6##1\@ifnextchar\egroup{\csname #1@ende\endcsname}{#7\csname #1@mitte\endcsname}}
    \expandafter\def\csname #1@ende\endcsname##1{#8}
}
\multiargrekursiverbefehl{svektor}{[}{;}{]}{\begin{smatrix}}{}{\\}{\\\end{smatrix}}
\multiargrekursiverbefehl{vektor}{[}{;}{]}{\begin{matrix}{c}}{}{\\}{\\\end{matrix}}
\multiargrekursiverbefehl{vektorzeile}{}{,}{;}{}{&}{}{}
\multiargrekursiverbefehl{matlabmatrix}{[}{;}{]}{\begin{smatrix}\vektorzeile}{\vektorzeile}{;\\}{;\end{smatrix}}

\def\underbracenodisplay#1{%
    \mathop{\vtop{\m@th\ialign{##\crcr
    $\hfil\displaystyle{#1}\hfil$\crcr
    \noalign{\kern3\p@\nointerlineskip}%
    \upbracefill\crcr\noalign{\kern3\p@}}}}\limits%
}

\def\changemargins{\@ifnextchar[{\indents@}{\indents@[\z@]}}
\def\indents@[#1]{\@ifnextchar[{\indents@@[#1]}{\indents@@[#1][\z@]}}
\def\indents@@[#1][#2]{%
    \begin{list}{}{\relax
        \setlength{\topsep}{\z@}\relax
        \setlength{\partopsep}{\z@}\relax
        \setlength{\parsep}{\parskip}\relax
        \setlength{\listparindent}{\z@}\relax
        \setlength{\itemindent}{\z@}\relax
        \setlength{\leftmargin}{#1}\relax
        \setlength{\rightmargin}{#2}\relax
        \let\alterlinkerRand\gesamtlinkerRand
        \let\alterrechterRand\gesamtrechterRand
        \addtolength{\gesamtlinkerRand}{#1}
        \addtolength{\gesamtrechterRand}{#2}
    }\relax
        \item[]\relax
}
    \def\endchangemargins{%
        \setlength{\gesamtlinkerRand}{\alterlinkerRand}
        \setlength{\gesamtlinkerRand}{\alterrechterRand}
        \end{list}%
    }

\def\indentonce{\begin{changemargins}[\rtab][\rtab]}
    \def\endindentonce{\end{changemargins}}

\def\restoremargins{\begin{changemargins}[-\gesamtlinkerRand][-\gesamtrechterRand]}
    \def\endrestoremargins{\end{changemargins}}

\def\programmiercode{
    \modulolinenumbers[1]
    \begin{changemargins}[\rtab][\rtab]%
    \begin{linenumbers}%
        \fontfamily{cmtt}\fontseries{m}\fontshape{u}\selectfont%
        \setlength{\parskip}{1\baselineskip}%
        \setlength{\parindent}{0pt}%
}
    \def\endprogrammiercode{
        \end{linenumbers}
        \end{changemargins}
    }

\def\schattiertebox@genbefehl#1#2#3{
    \expandafter\gdef\csname #1\endcsname{%
        \@ifnextchar[{\csname #1@args\endcsname}{\csname #1@args\endcsname[#3]}
    }
        \expandafter\def\csname #1@args\endcsname[##1]{%
            \@ifnextchar[{\csname #1@args@l\endcsname[##1]}{\csname #1@args@n\endcsname[##1]}
        }
        \expandafter\def\csname #1@args@l\endcsname[##1][##2]{%
            \@ifnextchar[{\csname #1@args@l@r\endcsname[##1][##2]}{\csname #1@args@l@n\endcsname[##1][##2]}
        }
        \expandafter\def\csname #1@args@n\endcsname[##1]{%
            \let\boolinmdframed\boolwahr
            \begin{mdframed}[#2leftmargin=0,rightmargin=0,outermargin=0,innermargin=0,##1]
        }
        \expandafter\def\csname #1@args@l@n\endcsname[##1][##2]{%
            \let\boolinmdframed\boolwahr
            \begin{mdframed}[#2leftmargin=##2/2,rightmargin=##2/2,outermargin=##2/2,innermargin=##2/2,##1]
        }
        \expandafter\def\csname #1@args@l@r\endcsname[##1][##2][##3]{%
            \let\boolinmdframed\boolwahr
            \begin{mdframed}[#2leftmargin=##2,rightmargin=##3,outermargin=##2,innermargin=##3,##1]
        }
    \expandafter\gdef\csname end#1\endcsname{%
        \end{mdframed}
        \let\boolinmdframed\boolfalsch
    }
}
    \schattiertebox@genbefehl{schattiertebox}{
        splittopskip=0,%
        splitbottomskip=0,%
        frametitleaboveskip=0,%
        frametitlebelowskip=0,%
        skipabove=1\baselineskip,%
        skipbelow=1\baselineskip,%
        linewidth=2pt,%
        linecolor=black,%
        roundcorner=4pt,%
    }{
        backgroundcolor=leer,%
        nobreak=true,%
    }

    \schattiertebox@genbefehl{schattierteboxdunn}{
        splittopskip=0,%
        splitbottomskip=0,%
        frametitleaboveskip=0,%
        frametitlebelowskip=0,%
        skipabove=1\baselineskip,%
        skipbelow=1\baselineskip,%
        linewidth=1pt,%
        linecolor=black,%
        roundcorner=2pt,%
    }{
        backgroundcolor=leer,%
        nobreak=true,%
    }

    \schattiertebox@genbefehl{highlightbox}{
        splittopskip=0,%
        splitbottomskip=0,%
        frametitleaboveskip=0,%
        frametitlebelowskip=0,%
        skipabove=1\baselineskip,%
        skipbelow=1\baselineskip,%
        linewidth=0pt,%
        linecolor=black,%
        roundcorner=2pt,%
    }{
        backgroundcolor=background_light_grey,%
        nobreak=true,%
    }

    \schattiertebox@genbefehl{highlightboxWithBreaks}{
        splittopskip=0,%
        splitbottomskip=0,%
        frametitleaboveskip=0,%
        frametitlebelowskip=0,%
        skipabove=1\baselineskip,%
        skipbelow=1\baselineskip,%
        linewidth=0pt,%
        linecolor=black,%
        roundcorner=2pt,%
    }{
        backgroundcolor=background_light_grey,%
    }

\def\tikzsetzepfeil#1{%
    \begin{tikzpicture}[remember picture,overlay,>=latex]%
        \draw #1;%
    \end{tikzpicture}%
}

\def\tikzsetzekreise[#1]#2#3{%
    \tikzsetzepfeil{%
    [rounded corners,#1]%
        ([shift={(-\tabcolsep,0.75\baselineskip)}]#2)%
        rectangle%
        ([shift={(\tabcolsep,-0.5\baselineskip)}]#3)
    }%
}

\tikzset{
    >=stealth,
    auto,
    node distance=1cm,
    thick,
    main node/.style={
        circle,draw,font=\sffamily\Large\bfseries,minimum size=0pt
    },
    state/.style={minimum size=0pt}
    loop above right/.style={loop,out=30,in=60,distance=0.5cm},
    loop above left/.style={above left,out=150,in=120,loop},
    loop below right/.style={below right,out=330,in=300,loop},
    loop below left/.style={below left,out=240,in=210,loop},
    itria/.style={
        draw,dashed,shape border uses incircle,
        isosceles triangle,shape border rotate=90,yshift=-1.45cm
    },
    rtria/.style={
        draw,dashed,shape border uses incircle,
        isosceles triangle,isosceles triangle apex angle=90,
        shape border rotate=-45,yshift=0.2cm,xshift=0.5cm
    },
    ritria/.style={
        draw,dashed,shape border uses incircle,
        isosceles triangle,isosceles triangle apex angle=110,
        shape border rotate=-55,yshift=0.1cm
    },
    litria/.style={
        draw,dashed,shape border uses incircle,
        isosceles triangle,isosceles triangle apex angle=110,
        shape border rotate=235,yshift=0.1cm
    }
}

\renewenvironment{cases}[0]{\left\{\begin{array}[c]{0lcl}}{\end{array}\right.}

\renewenvironment{displaycases}[0]{%
    \left\{\relax%
    \bgroup\relax%
    \everymath={\displaystyle}\relax%
    \begin{array}[c]{0lcl}\relax%
}{%
    \end{array}\relax%
    \egroup\relax%
    \right.\relax%
}

\providecommand{\usesinglequotes}{}
\renewcommand{\usesinglequotes}[1]{`#1'}

\providecommand{\zeroone}{}
\renewcommand{\zeroone}[0]{\textup{0\=/1}\relax\ifmmode\else\@\xspace\fi}
\providecommand{\onetoone}{}
\renewcommand{\onetoone}[0]{\ensuremath{1\!\!:\!\!1}\relax\ifmmode\else\@\xspace\fi}
\providecommand{\First}{}
\renewcommand{\First}[0]{\text{I\textsuperscript{st}}\relax\ifmmode\else\@\xspace\fi}
\providecommand{\Second}{}
\renewcommand{\Second}[0]{\text{II\textsuperscript{nd}}\relax\ifmmode\else\@\xspace\fi}
\providecommand{\Third}{}
\renewcommand{\Third}[0]{\text{III\textsuperscript{rd}}\relax\ifmmode\else\@\xspace\fi}

\providecommand{\TextCStarAlg}{}
\renewcommand{\TextCStarAlg}[0]{\text{C\textsuperscript{\ensuremath{\ast}}\=/algebra}\relax\ifmmode\else\@\xspace\fi}
\providecommand{\TextCStarSubAlg}{}
\renewcommand{\TextCStarSubAlg}[0]{\text{C\textsuperscript{\ensuremath{\ast}}\=/subalgebra}\relax\ifmmode\else\@\xspace\fi}
\providecommand{\TextCStarAlgs}{}
\renewcommand{\TextCStarAlgs}[0]{\text{C\textsuperscript{\ensuremath{\ast}}\=/algebras}\relax\ifmmode\else\@\xspace\fi}
\providecommand{\TextCStarSubAlgs}{}
\renewcommand{\TextCStarSubAlgs}[0]{\text{C\textsuperscript{\ensuremath{\ast}}\=/subalgebras}\relax\ifmmode\else\@\xspace\fi}

\providecommand{\envPreMathsLong}{}
\renewcommand{\envPreMathsLong}[0]{%
    \bgroup\relax%
    \let\old@arraystretch\arraystretch\relax%
    \renewcommand\arraystretch{1.2}\relax\relax%
}
\providecommand{\envPostMathsLong}{}
\renewcommand{\envPostMathsLong}[0]{%
    \renewcommand\arraystretch{\old@arraystretch}\relax%
    \egroup\relax%
}
\providecommand{\id}{}
\renewcommand{\id}[0]{\mathrm{\textit{id}}}

\providecommand{\complex}{}
\renewcommand{\complex}[0]{\mathbb{C}}

\providecommand{\reals}{}
\renewcommand{\reals}[0]{\mathbb{R}}
\providecommand{\realsNonNeg}{}
\renewcommand{\realsNonNeg}[0]{\reals_{\geq 0}}

\providecommand{\rationals}{}
\renewcommand{\rationals}[0]{\mathbb{Q}}

\providecommand{\naturals}{}
\renewcommand{\naturals}[0]{\mathbb{N}}
\providecommand{\naturalsZero}{}
\renewcommand{\naturalsZero}[0]{\mathbb{N}_{0}}

\providecommand{\HilbertRaum}{}
\renewcommand{\HilbertRaum}[0]{\mathcal{H}}

\providecommand{\GenSet}{}
\renewcommand{\GenSet}[1]{\langle #1 \rangle}
\providecommand{\GenSetLong}{}
\renewcommand{\GenSetLong}[1]{\Big\langle #1 \Big\rangle}
\providecommand{\GenSetBy}{}
\renewcommand{\GenSetBy}[2]{\langle #1 \mathrel{\vert} #2 \rangle}
\providecommand{\GenSetByLong}{}
\renewcommand{\GenSetByLong}[2]{\Big\langle #1 \mathrel{\Big\vert} #2 \Big\rangle}
\providecommand{\Proj}{}
\renewcommand{\Proj}[0]{\mathrm{Proj}}

\providecommand{\oBall}{}
\renewcommand{\oBall}[2]{\cal{B}_{#2}(#1)}

\providecommand{\topSOT}{}
\renewcommand{\topSOT}[0]{\text{\upshape\scshape sot}}

\providecommand{\topWOT}{}
\renewcommand{\topWOT}[0]{\text{\upshape\scshape wot}}

\providecommand{\card}{}
\renewcommand{\card}[1]{\lvert #1 \rvert}

\providecommand{\einser}{}
\renewcommand{\einser}[0]{1\!\!1}

\providecommand{\abs}{}
\renewcommand{\abs}[1]{\lvert #1 \rvert}

\providecommand{\linspann}{}
\renewcommand{\linspann}[0]{\textup{lin}}

\providecommand{\ad}{}
\renewcommand{\ad}[1]{\text{\upshape Ad}_{#1}}
\providecommand{\tr}{}
\renewcommand{\tr}[0]{\text{\upshape tr}}
\providecommand{\onematrix}{}
\renewcommand{\onematrix}[0]{\text{\upshape\bfseries I}}
\providecommand{\zeromatrix}{}
\renewcommand{\zeromatrix}[0]{\mathbf{0}}

\providecommand{\norm}{}
\renewcommand{\norm}[1]{\lVert #1 \rVert}

\providecommand{\normLarge}{}
\renewcommand{\normLarge}[1]{\left\| #1 \right\|}

\providecommand{\BoundedOpsSymbol}{}
\renewcommand{\BoundedOpsSymbol}[0]{\mathfrak{L}}
\providecommand{\FiniteRankOpsSymbol}{}
\renewcommand{\FiniteRankOpsSymbol}[0]{\mathfrak{L}_{0}}
\providecommand{\BaseVector}{}
\renewcommand{\BaseVector}[1]{\mathbf{e}_{#1}}
\providecommand{\ElementaryMatrix}{}
\renewcommand{\ElementaryMatrix}[2]{\mathbf{E}_{#1,#2}}
\providecommand{\Choi}{}
\renewcommand{\Choi}[1]{\mathcal{C}_{#1}}

\def\Cts{\@ifnextchar_{\Cts@tief}{\Cts@tief_{}}}
    \def\Cts@tief_#1#2{\@ifnextchar\bgroup{\Cts@two_{#1}{#2}}{\Cts@one_{#1}{#2}}}
    \def\Cts@one_#1#2{C_{#1}\big(#2\big)}
    \def\Cts@two_#1#2#3{C_{#1}\big(#2,~#3\big)}

\def\BoundedOps#1{\@ifnextchar\bgroup{\BoundedOps@two{#1}}{\mathop{\BoundedOpsSymbol}(#1)}}
    \def\BoundedOps@two#1#2{\mathop{\BoundedOpsSymbol}(#1,#2)}
\def\BoundedOpsInv#1{\@ifnextchar\bgroup{\BoundedOps@two{#1}}{\mathop{\BoundedOpsSymbol}(#1)^{\times}}}
    \def\BoundedOpsInv@two#1#2{\mathop{\BoundedOpsSymbol}(#1,#2)^{\times}}
\def\FiniteRankOps#1{\@ifnextchar\bgroup{\FiniteRankOps@two{#1}}{\mathop{\FiniteRankOpsSymbol}(#1)}}
    \def\FiniteRankOps@two#1#2{\mathop{\FiniteRankOpsSymbol}(#1,#2)}
\def\FiniteRankOpsInv#1{\@ifnextchar\bgroup{\FiniteRankOps@two{#1}}{\mathop{\FiniteRankOpsSymbol}(#1)^{\times}}}
    \def\FiniteRankOpsInv@two#1#2{\mathop{\FiniteRankOpsSymbol}(#1,#2)^{\times}}

\def\restr#1{\vert_{#1}}
\def\without{\mathbin{\setminus}}

\let\altphi\phi
\let\altvarphi\varphi
    \def\phi{\altvarphi}
    \def\varphi{\altphi}
\def\quer#1{\overline{#1}}

\def\lim{\mathop{\ell\mathrm{im}}}
\def\supp{\mathop{\textup{supp}}}
\def\dim{\mathop{\textup{dim}}}

\def\ran{\mathop{\textup{ran}}}

\def\Matr{\mathop{\cal{M}}}

\def\Re{\mathop{\mathfrak{R}\mathrm{e}}}

\def\tinytopSOT{\text{\scriptsize\upshape \scshape sot}}

\def\Generate{\@ifnextchar[{\Generate@named}{\Generate@plain}}
    \def\Generate@named[#1]#2{\@ifnextchar\bgroup{\mathrm{#1}{}\GenSetBy{#2}}{\mathrm{#1}{}\GenSet{#2}}}
    \def\Generate@plain#1{\@ifnextchar\bgroup{\GenSetBy{#1}}{\GenSet{#1}}}
\def\GenerateLong{\@ifnextchar[{\GenerateLong@named}{\GenerateLong@plain}}
    \def\GenerateLong@named[#1]#2{\@ifnextchar\bgroup{\mathrm{#1}{}\GenSetByLong{#2}}{\mathrm{#1}{}\GenSetLong{#2}}}
    \def\GenerateLong@plain#1{\@ifnextchar\bgroup{\GenSetByLong{#1}}{\GenSetLong{#1}}}

\@ifundefined{setcitestyle}{%
}{%
    \setcitestyle{numeric-comp,open={[},close={]}}
}

\allowdisplaybreaks 
\renewcommand{\arraystretch}{1}
\def\firstparagraph{\noindent}
\def\continueparagraph{\noindent}

\generatenestedsecnumbering{arabic}{section}{subsection}
\generatenestedsecnumbering{arabic}{subsection}{subsubsection}

\def\sectionname{}
\def\subsectionname{}
\def\subsubsectionname{}

\def\documentpartnormal{
    \let\boolinappendix\boolfalse
    \addtocontents{toc}{\protect\setcounter{tocdepth}{1}}
    \def\chaptername{Chapter}
    \def\sectionname{}
    \def\subsectionname{}
    \def\subsubsectionname{}
    \generatenestedsecnumbering{arabic}{section}{subsection}
}

\def\documentpartappendix{
    \appendix
    \let\boolinappendix\boolwahr
    \pagenumberinghandledelay
    \addcontentsline{toc}{section}{Appendices}
    \addtocontents{toc}{\protect\setcounter{tocdepth}{0}}
    \def\sectionname{Appendix}
    \generatenestedsecnumbering{Alph}{section}{subsection}
}

\def\@settitle{%
    \bgroup\relax
    \centering
    \LARGE\relax
    \scshape\relax
    \@title\relax
    \egroup\relax
}

\def\@seccntformat#1{%
    \protect\textup{%
        \protect\@secnumfont
        \expandafter\protect\csname format#1\endcsname%
        \csname the#1\endcsname
        \expandafter\protect\csname format#1@pt\endcsname%
        \space
    }%
}

\def\formatsection@text{\centering\Large\scshape}
\def\formatsection@pt{\secnumberingseppt}
\def\section{\@startsection{section}{1}{\z@}{.7\linespacing\@plus\linespacing}{.5\linespacing}{\formatsection@text}}

\def\formatsubsection@text{\flushleft\bfseries\scshape}
\def\formatsubsection@pt{\subsecnumberingseppt}
\def\subsection{\@startsection{subsection}{2}{\z@}{\z@}{\z@\hspace{1em}}{\formatsubsection@text}}

\renewcommand{\paragraph}[1]{%
    {\itshape #1}\:%
}

\def\pagenumbering#1{
    \gdef\thepage{\csname @#1\endcsname\c@page}
}

\def\pagenumberinghandledelay{}

\DefineFNsymbols*{customAlphabet}{abcdefghijklmnopqrstuvwxyz}
\setfnsymbol{customAlphabet}

\def\footnotemark[#1]{\text{\textsuperscript{\getrefnumber{#1}}}}

\def\footnote@custom@period{24}
\providecommand{\footnote@ctr@prebump}{}
\renewcommand{\footnote@ctr@prebump}[1]{%
    \ifnum\value{#1}<\footnote@custom@period%
    \else\relax
        \setcounter{#1}{0}%
    \fi%
}
\providecommand{\footnoteref}{}
\renewcommand{\footnoteref}[1]{\protected@xdef\@thefnmark{\ref{#1}}\@footnotemark}
\let\@old@footnotetext\footnotetext
\def\footnotetext[#1]#2{%
    \footnote@ctr@prebump{footnote}%
    \addtocounter{footnote}{1}%
    \@old@footnotetext[\value{footnote}]{\label{#1}#2}%
}

\let\@old@footnote\footnote
\renewcommand{\footnote}[1]{%
    \footnote@ctr@prebump{footnote}%
    \@old@footnote{#1}%
}

\def\kopfzeiledefault{
    \lhead[]{}
    \lhead[]{}
    \chead[]{}
    \rhead[]{}
    \lfoot[]{}
    \cfoot{\footnotesize\thepage}
    \rfoot[]{}
}

\def\aktuellesfont{\csname lmodern\endcsname}
\def\documentfont{%
    \gdef\aktuellesfont{\csname lmodern\endcsname}%
    \fontfamily{lmr}\fontseries{m}\selectfont%
    \renewcommand{\sfdefault}{phv}%
    \renewcommand{\ttdefault}{pcr}%
    \renewcommand{\rmdefault}{cmr}
    \renewcommand{\bfdefault}{bx}%
    \renewcommand{\itdefault}{it}%
    \renewcommand{\sldefault}{sl}%
    \renewcommand{\scdefault}{sc}%
    \renewcommand{\updefault}{n}%
}

\SetAlgoHangIndent{0pt}
\DontPrintSemicolon
\def\Line#1{#1\;}
\def\LineNoNr#1{#1}
\SetKwInput{Input}{Input}
\SetKwInput{Output}{Output}
\SetKwInput{Inputs}{Inputs}
\SetKwInput{Outputs}{Outputs}
\SetKwComment{Comment}{// }{}
\SetKwIF{If}{ElseIf}{Else}{If}{then:}{Else If}{Else:}{}
\SetKwFor{ForEach}{For each}{do:}{}
\SetKw{Return}{Return}
\SetKwProg{Fn}{Function}{:}{}

\allowdisplaybreaks

\def\startdocumentlayoutoptions{
    \selectlanguage{british}
    \setlength{\parskip}{0.25\baselineskip}
    \setlength{\parindent}{2em}
    \kopfzeiledefault
    \documentfont
    \normalsize
}

\providecommand{\highlightTerm}{}
\renewcommand{\highlightTerm}[1]{\emph{#1}}

\renewenvironment{highlightForReview}[0]{\color{blue}}{}

\def\addresseshere{%
  \bgroup
  \setlength{\parindent}{0pt}
  \enddoc@text
  \egroup
  \let\enddoc@text\relax
}

\makeatother

\begin{document}
\startdocumentlayoutoptions

\documentpartnormal
\thispagestyle{plain}



\def\abstractname{Abstract}
\begin{abstract}
    We establish explicit means via which natural dilations of
    completely positive (CP) maps
    can be constructed
    \akin Kraus's \Second representation theorem.
    To obtain this,
    we rely on the Choi\==Jamio{\l}kowski correspondence
    and develop a Cholesky algorithm for bi-partite systems.
    This enables a canonical construction of adjoint actions
    which recover the behaviour of the original CP\=/maps.
    Our results hold under separability assumptions
    and the requirement that the maps
    are completely bounded
    and preserve the subideal of finite rank operators.
\end{abstract}



\title[The Choi\==Cholesky algorithm for completely positive maps]{%
    \hraum The Choi\==Cholesky algorithm\hraum%
    \relax
    \newline%
    \hraum for completely positive maps\hraum%
}

\author{Raj Dahya}
\address{Fakult\"at f\"ur Mathematik und Informatik\newline
Universit\"at Leipzig, Augustusplatz 10, D-04109 Leipzig, Germany}
\email{raj\,[\!\![dot]\!\!]\,dahya\:[\!\![at]\!\!]\:web\,[\!\![dot]\!\!]\,de}

\def\subjclassname{Mathematics Subject Classification (2020)}
\subjclass{81R15, 46L07, 47C15, 47A20}
\keywords{CPTP-maps; Choi matrices; Cholesky decomposition; dilations; measurability.}

\maketitle



\documentpartnormal
\setcounternach{section}{1}



\section[Introduction]{Introduction}
\label{sec:intro:sig:article-graph-raj-dahya}


\firstparagraph
Completely positive trace-preserving (CPTP) maps
provide a well established mathematical model
of transformations of physical states in quantum systems
(see \exempli
  \cite{%
      Davies1976BookQuantumOpenSys,%
      Kraus1983%
  }%
).
A map ${\Phi : L^{1}(H_{1}) \to L^{1}(H_{2})}$
between trace-class operators defined on Hilbert spaces $H_{1}$ and $H_{2}$
constitutes a CPTP\=/map
if the maps
${\id_{n} \otimes \Phi : L^{1}(\complex^{n} \otimes H_{1}) \to L^{1}(\complex^{n} \otimes H_{2})}$
are positive%
\footnote{%
  \idest
  $(\id_{n} \otimes \Phi)(s)$
  is positive semi-definite
  for all positive semi-definite trace-class operators
  $s \in \BoundedOps{\complex^{n} \otimes H_{1}}$.
}
for all $n\in\naturals$
and
$\tr(\Phi(s)) = \tr(s)$
for all $s \in L^{1}(H_{1})$.
The goal of this paper is to address the question:
\emph{%
  Can dilations of completely positive maps
  be explicitly constructed
  and can this be obtained in a \usesinglequotes{natural}, \idest canonical, fashion?
}



In
  \cite[Theorem~4.1]{Kraus1971Article},
  \cite[\S{}3 and \S{}5]{Kraus1983}
Kraus introduced two cornerstone representation theorems for CPTP\=/maps.
By the \First representation theorem,
$\Phi$ is a CPTP\=/map
if and only if it can be expressed as

\begin{restoremargins}
\begin{equation}
\label{eq:kraus:I:sig:article-graph-raj-dahya}
  \Phi(s)
    =
      \sum_{i}
        w_{i}\:s\:w_{i}^{\ast}
\end{equation}
\end{restoremargins}

\continueparagraph
for all $s \in L^{1}(H_{1})$
and some family $\{w_{i}\}_{i} \subseteq \BoundedOps{H_{1}}{H_{2}}$
satisfying $\sum_{i}w_{i}^{\ast}w_{i} = \onematrix$.%
\footnote{%
  A similar representation was established earlier by de~Pillis
  (see
    \cite[Theorem~2.1]{DePillis1967Article}%
  ).
  However, this representation
  is limited to the finite-dimensional setting,
  involves the use of the matrix transpose of $s$ on the right hand side,
  and characterises positive (instead of completely positive) maps.
}
By Kraus's \Second representation theorem,
which can be directly derived from this,
$\Phi$ is a CPTP\=/map
if and only if it can be expressed as

\begin{restoremargins}
\begin{equation}
\label{eq:kraus:II:sig:article-graph-raj-dahya}
  \Phi(s)
    =
      \tr_{2}(\ad{U}\:(s \otimes \omega))
\end{equation}
\end{restoremargins}

\continueparagraph
for all $s \in L^{1}(H_{1})$,
where
  $U \in \BoundedOps{H_{1} \otimes H}{H_{2} \otimes H}$
  is a unitary operator for some auxiliary Hilbert space $H$,
  $\omega \in L^{1}(H)$ is a state,%
  \footnote{%
    \idest
    $\omega \geq \zeromatrix$ and $\tr(\omega) = 1$.
  }
  $\ad{U}$ denotes the \highlightTerm{adjoint action}
  and
  $\tr_{2}$ the \highlightTerm{partial trace} (see below).

A particular advantage of Kraus's results
is that they are applicable to Hilbert spaces of arbitrary dimensions.%
\footnote{%
  Note that whilst Kraus assumed separability of the underlying Hilbert spaces
  in his proofs, these are an unnecessary requirement
  (see \exempli
    \cite[Theorem~9.2.3]{Davies1976BookQuantumOpenSys},
    \cite[Proposition~2.3.10, Remark~2.3.11, and Appendix~A.5.3]{vomEnde2020PhdThesis}%
  ).
}
However, under the hood, the \First representation theorem
relies on the Stinespring dilation theorem
  \cite[Theorem~4.8]{Pisier2001bookCBmaps},
  \cite[Theorem~9.2.1]{Davies1976BookQuantumOpenSys}
and Naimark's representation theorem
of normal \TextCStarAlg representations
  \cite[Theorem~3]{Naimark1972normedalg},
  \cite[Lemma~9.2.2]{Davies1976BookQuantumOpenSys},
which in turn relies on Zorn's lemma.
That is, Kraus's representations do not allow us
to compute the dilations of CPTP\=/maps
via constructible means.

During the same period,
Choi \cite{Choi1975Article}
developed constructive tools
via which Kraus's \First representation theorem
could be achieved.
This approach has the following limitations:
\textbf{1)}~Choi's proof of the \First representation theorem
is restricted to the finite-dimensional setting.
\textbf{2)}~The means via which the parameters
in the \First and thereby \Second representation theorems
are derived involve a certain level of arbitrary choice,
and therefore do not establish a \emph{canonical} construction
(\cf
  \cite[Remark~4]{Choi1975Article}%
).

To expand on \textbf{2)}, consider a CPTP\=/map
${\Phi : L^{1}(H_{1}) \to L^{1}(H_{2})}$,
where $H_{1}$ is finite-dimensional
with a fixed orthonormal basis (ONB) $\{\BaseVector{i}\}_{i=1}^{N}$
for some $N \in \naturals$.
In this setting,
a certain positive operator,%
\footnote{%
 for operators on Hilbert spaces
 \highlightTerm{positive} shall always mean \highlightTerm{positive semi-definite}.
}
$\Choi{\Phi} \in \BoundedOps{H_{1} \otimes H_{2}}$,
referred to as the \highlightTerm{Choi matrix},
can be associated to $\Phi$ in a bijective manner
(\cf
  \cite[Definition~4.1.1 and Theorem~4.1.8]{Stoermer2013BookPosOps},
  see also
  \S{}\ref{sec:choi:sig:article-graph-raj-dahya}%
).
Due to positivity, the Choi matrix admits a diagonalisation,
which in turn is used to construct the $w_{i}$
operators in \eqcref{eq:kraus:I:sig:article-graph-raj-dahya}
(see
  \cite[Theorem~1]{Choi1975Article},
  \cite[Theorem~4.1.8]{Stoermer2013BookPosOps}%
).
The issue in this approach is that there is in general no natural choice
for the diagonalisation,
in particular where eigenvalues are repeated.
Relying on selection theorems
(\cf \exempli
  \cite{%
      Michael1956ArticleI,%
      Michael1956ArticleII%
  },
  \cite[Chapter~18]{Kechris1995BookDST}%
),
one can produce Borel-measurable means via which the diagonalisation
and thereby the dilation can be constructed.
However, such selections cannot in general be explicitly described
nor are they unique.



The present paper attempts to bridge this gap,
by achieving a result \akin Kraus's \Second representation theorem,
which establishes a \emph{canonical} construction.
Given a fixed choice of an ONB for $H_{1}$ (\viz as an ordered sequence of basis vectors),
our dilations can be constructed in a Borel-measurable fashion
which can be explicitly described without any reliance on arbitrary choice.
The key ingredient is to replace \emph{diagonalisations}
by \emph{Cholesky decompositions}
of the positive Choi matrices associated to completely positive (CP) maps.
Our main challenge here is that Choi matrices are defined on \emph{bi-partite systems},
requiring the Cholesky algorithm to be reworked for this setting.
This is the goal of \S{}\ref{sec:bipartite:sig:article-graph-raj-dahya}.
In \S{}\ref{sec:results:sig:article-graph-raj-dahya}
we then derive our representation theorem and its properties.



\subsection[General notation]{General notation}
\label{sec:intro:notation:sig:article-graph-raj-dahya}

\firstparagraph
Throughout this paper we use the following notation:

\begin{itemize}
  \item
    $\naturals = \{1,2,\ldots\}$,
    $\naturalsZero = \{0,1,2,\ldots\}$,
    $\realsNonNeg = \{r\in\reals \mid r\geq 0\}$.

  \item
    For any Hilbert space,
    $\onematrix$ shall denote the \highlightTerm{identity operator}.
    For \TextCStarAlgs, $\id$ shall denote the \highlightTerm{identity map}.
    In ambivalent circumstances we use subscripts to denote the space
    on which an identity operator lives.

  \item
    For any Hilbert space $H$,
    $
      \FiniteRankOps{H},
      \BoundedOps{H}_{\text{s-a}}
      \subseteq
      \BoundedOps{H}
    $
    denote the subspaces
    of finite rank
    \resp
    self-adjoint
    operators.

  \item
    Letting $H_{1}$, $H_{2}$ be Hilbert spaces,
    a linear transformation ${\Phi : L^{1}(H_{1}) \to L^{1}(H_{2})}$
    which satisfies
    $
      \norm{\Phi}_{\text{cb}}
      \coloneqq
        \sup_{n\in\naturals}
          \norm{\id_{n} \otimes \Phi}
      < \infty
    $
    \resp
    $\norm{\Phi}_{\text{cb}} \leq 1$
    is called
    \highlightTerm{completely bounded} (CB)
    \resp
    \highlightTerm{completely contractive} (CC).

  \item
    Maps that are CP and trace-preserving (TP) \resp CC \resp CB
    are referred to as CPTP \resp CPCC \resp CPCB.
    Note that CPTP\=/maps are automatically CC
    (\cf
      \cite[Lemma~2.1]{Davies1976BookQuantumOpenSys},
      \cite[\S{}2,~(2.21)]{Kraus1983}%
    )
    and that CB-, CC-, CP-, and TP\=/maps
    are closed under composition.

  \item
    The \highlightTerm{Hermitian conjugate}
    of a bounded operator $u \in \BoundedOps{H_{1}}{H_{2}}$,
    shall be denoted as $u^{\ast} \in \BoundedOps{H_{2}}{H_{1}}$.
    If $u$ (equivalently: $u^{\ast}$) is invertible,
    then $u^{-\ast}$ denotes $(u^{-1})^{\ast} = (u^{\ast})^{-1}$.
    If it exists, the \highlightTerm{Moore-Penrose pseudo-inverse}
    (see Appendix \ref{app:spectral+mb:sig:article-graph-raj-dahya})
    is denoted by $u^{\dagger} \in \BoundedOps{H_{2}}{H_{1}}$.
    The \highlightTerm{adjoint action}
    of $u$ is the linear map defined by
      $\ad{u}\:a = u\:a\:u^{\ast}$
    for $a \in \BoundedOps{H_{1}}{H_{1}}$.

  \item
    It shall be convenient to adopt the \usesinglequotes{bra-ket}
    notation from mathematical physics
    for inner products on a Hilbert space $H$,
    \viz
      $\braket{\eta}{\xi}$
    for vectors $\xi, \eta \in H$,
    which is linear in the second argument
    and conjugate linear in the first.
    The expressions
      $\ket{\xi}$
      \resp
      $\bra{\eta}$
      \resp
      $\ketbra{\xi}{\eta}$,
    denote the operators defined by
      ${
        \complex \ni t
        \mapsto
        t\:\xi
        \in H
      }$
      \resp
      ${
        \HilbertRaum \ni x
        \mapsto
        \braket{\eta}{x} \in \complex
      }$
      \resp
      ${
        \HilbertRaum \ni x
        \mapsto
        \braket{\eta}{x} \: \xi
        \in \linspann\{\xi\}
        \subseteq H
      }$.
    This notation makes the following
    algebraic and geometric expressions possible:
    $\bra{\xi}^{\ast} = \ket{\xi}$,
    $\ket{\xi}^{\ast} = \bra{\xi}$,
    $(\ketbra{\xi}{\eta})^{\ast} = \ketbra{\eta}{\xi}$,
    $(\ket{\xi})\:(\bra{\eta}) = \ketbra{\eta}{\xi}$,
    and
    $
      (\bra{\eta})\:(\ket{\xi})
      = \braket{\eta}{\xi}
      = \tr(\ketbra{\xi}{\eta})
    $.

  \item
    If it is unclear from the context,
    we shall add subscripts to the bra-kets
    \idest
      $\bra{\xi}_{H}$,
      $\ket{\xi}_{H}$,
      $\braket{\eta}{\xi}_{H}$,
      $\ketbra{\xi}{\eta}_{H}$,
    to indicate in which Hilbert space
    the constructions occur.

  \item
    Given a Hilbert space $H$,
    we may identify the \highlightTerm{dual space}
    $H^{\ast}$
    with
    $\{x^{\ast} \coloneqq \bra{x} \mid x \in H\}$
    endowed with the inner product structure
    $
      \braket{y^{\ast}}{x^{\ast}}
      \coloneqq \braket{x}{y}
    $.
    In particular,
    ${x \mapsto x^{\ast}}$
    defines a conjugate-linear isomorphism
    between $H$ and $H^{\ast}$.

  \item
    Given Hilbert spaces $H_{1},H_{2},\ldots,H_{n}$
    and any $l \in \{1,2,\ldots,n-1\}$
    the \highlightTerm{partial trace}
    ${
      \tr_{l+1,\ldots,n}
      : L^{1}(H_{1}\otimes H_{2} \otimes \ldots \otimes H_{n})
      \to
      L^{1}(H_{1}\otimes H_{2} \otimes \ldots \otimes H_{l})
    }$
    is the linear operation
    which uniquely satisfies

      \begin{shorteqnarray}
        \tr(\tr_{l+1,\ldots,n}(s)a)
          =
            \tr(
              s
              \: (
                a
                \otimes
                \onematrix_{H_{l+1} \otimes H_{l+2} \otimes \ldots \otimes H_{n}}
              )
            )
      \end{shorteqnarray}

    \continueparagraph
    for all $s \in L^{1}(H_{1}\otimes H_{2} \otimes \ldots \otimes H_{n})$,
    $a \in \BoundedOps{H_{1}\otimes H_{2} \otimes \ldots \otimes H_{l}}$
    (\cf
      \cite[\S{}10.2~(1.5)]{Davies1976BookQuantumOpenSys}%
    ).

    As a simple example one can readily verify
    that

      \begin{restoremargins}
      \begin{equation}
      \label{eq:partial-trace:tensorproduct:sig:article-graph-raj-dahya}
        \tr_{l+1,\ldots,n}(
          \bigotimes_{k=1}^{n}
            s_{k}
        )
          =
            \prod_{k=l+1}^{n}
              \tr(s_{k})
            \:
            \bigotimes_{i=1}^{l}
              s_{i}
      \end{equation}
      \end{restoremargins}

    \continueparagraph
    for all
    $s_{1} \in L^{1}(H_{1})$,
    $s_{2} \in L^{1}(H_{2})$,
    \ldots
    $s_{n} \in L^{1}(H_{n})$.
\end{itemize}



\subsection[Statement of results]{Statement of results}
\label{sec:intro:results:sig:article-graph-raj-dahya}

\firstparagraph
To formulate our representation theorem,
we make use of the following terminology.
Letting $H_{1}$ and $H_{2}$ be arbitrary Hilbert spaces,
we shall call a map
${\Phi : L^{1}(H_{1}) \to L^{1}(H_{2})}$
\highlightTerm{finite rank preserving (FP)}
if $\Phi(\FiniteRankOps{H_{1}}) \subseteq \FiniteRankOps{H_{2}}$.
This can be motivated by physical systems
whose evolving state is described by finitely many eigenvectors.
Clearly, FP\=/maps
include all adjoint maps
(in particular the identity transformation)
and are closed under composition.
If $H_{2}$ is finite-dimensional,
then all linear transformations
${\Phi : L^{1}(H_{1}) \to L^{1}(H_{2})}$
are trivially FP\=/maps.

We shall further make use of a certain CPTP\=/map,
via which our representations shall be factored:
By basic understanding of Hilbert\==Schmidt spaces,
$
  \HilbertRaum \coloneqq L^{2}(H_{1} \otimes H_{2}, H_{2})
$
is isomorphic to
$
  H_{21^{\ast}2^{\ast}} \coloneqq H_{2} \otimes H_{1}^{\ast} \otimes H_{2}^{\ast}
$
via a unitary transformation
${\theta : \HilbertRaum \to H_{21^{\ast}2^{\ast}}}$
which uniquely satisfies
$
  \theta^{\ast}(z \otimes \bra{x} \otimes \bra{y})
  = \bra{x} \otimes \ketbra{z}{y}
$
for $x \in H_{1}$, $y, z \in H_{2}$
(\cf \exempli
  \cite[Propositions~2.6.9]{KadisonRingrose1983volI},
  \cite[Propositions~3.4.14--15]{Pedersen1989analysisBook}%
).
It is also well understood that the partial trace
${\tr_{2,3} : L^{1}(H_{21^{\ast}2^{\ast}}) \to L^{1}(H_{2})}$
constitutes a CPTP\=/map
(\cf \exempli
  \cite[\S{}5,~(5.3)]{Kraus1971Article}%
).
It follows that the composition

  \begin{restoremargins}
  \begin{equation}
  \label{eq:defn:universal-cptp:sig:article-graph-raj-dahya}
    \Psi_{H_{1},H_{2}}
    \coloneqq
    \tr_{2,3} \circ \ad{\theta}
    : L^{1}(\HilbertRaum) \to L^{1}(H_{2})
  \end{equation}
  \end{restoremargins}

\continueparagraph
constitutes a CPTP\=/map.

Consider now elements of the form
$
  w_{1} \coloneqq \ket{x_{1}} \otimes \ketbra{z_{1}}{y_{1}},
  w_{2} \coloneqq \ket{x_{2}} \otimes \ketbra{z_{2}}{y_{2}}
  \in L^{1}(\HilbertRaum)
$,
where
$x_{1},x_{2} \in H_{1}$,
$y_{1},z_{1},y_{2},z_{2} \in H_{2}$.
Observe that
$
  w_{1} = \theta^{\ast}(z_{1} \otimes x_{1}^{\prime} \otimes y_{1}^{\prime})
$
and
$w_{2} = \theta^{\ast}(z_{2} \otimes x_{2}^{\prime} \otimes y_{2}^{\prime})$
where
$x_{1}^{\prime} = \bra{x_{1}}$,
$x_{2}^{\prime} = \bra{x_{2}}$,
$y_{1}^{\prime} = \bra{y_{1}}$,
$y_{2}^{\prime} = \bra{y_{2}}$.
Thus

  \begin{longeqnarray}
    \Psi_{H_{1},H_{2}}(\ketbra{w_{1}}{w_{2}})
      &=
        &\Psi_{H_{1},H_{2}}(
          \theta^{\ast}
          \:\ketbra{
            (z_{1} \otimes x_{1}^{\prime} \otimes y_{1}^{\prime})
          }{
            (z_{2} \otimes x_{2}^{\prime} \otimes y_{2}^{\prime})
          }_{\HilbertRaum}
          \:\theta
        )
        \\
      &=
        &(
          \tr_{2,3}
          \circ
          \ad{\theta}
          \circ
          \ad{\theta^{\ast}}
        )
        (
          \ketbra{z_{1}}{z_{2}}_{H_{2}}
          \otimes
          \ketbra{x_{1}^{\prime}}{x_{2}^{\prime}}_{H_{1}^{\prime}}
          \otimes
          \ketbra{y_{1}^{\prime}}{y_{2}^{\prime}}_{H_{1}^{\prime}}
        )
        \\
      &=
        &(
          \tr_{2,3}
        )
        (
          \ketbra{z_{1}}{z_{2}}_{H_{2}}
          \otimes
          \ketbra{x_{1}^{\prime}}{x_{2}^{\prime}}_{H_{1}^{\prime}}
          \otimes
          \ketbra{y_{1}^{\prime}}{y_{2}^{\prime}}_{H_{1}^{\prime}}
        )
        \\
      &\eqcrefoverset{eq:partial-trace:tensorproduct:sig:article-graph-raj-dahya}{=}
        &\tr(\ketbra{x_{1}^{\prime}}{x_{2}^{\prime}}_{H_{1}^{\prime}})
        \:\tr(\ketbra{y_{1}^{\prime}}{y_{2}^{\prime}}_{H_{1}^{\prime}})
        \:\ketbra{z_{1}}{z_{2}}_{H_{2}}
        \\
      &=
        &\braket{x_{2}^{\prime}}{x_{1}^{\prime}}_{H_{1}^{\prime}}
        \:\braket{y_{2}^{\prime}}{y_{1}^{\prime}}_{H_{2}^{\prime}}
        \:\ketbra{z_{1}}{z_{2}}_{H_{2}}
        \\
      &=
        &\braket{x_{1}}{x_{2}}_{H_{1}}
        \:\braket{y_{1}}{y_{2}}_{H_{2}}
        \:\ketbra{z_{1}}{z_{2}}_{H_{2}}
        \\
      &=
        &(
          \bra{x_{1}}
          \otimes
          \ketbra{z_{1}}{y_{1}}
        )
        \:(
          \bra{x_{2}}
          \otimes
          \ketbra{y_{2}}{z_{2}}
        )^{\ast}
      =
        w_{1}\:w_{2}^{\ast}.
  \end{longeqnarray}

Letting $\mathcal{W}$ be the linear span of elements of the form
$\ketbra{w_{1}}{w_{2}}$
as chosen above,
it follows by the
$L^{1}$-density of $\mathcal{W}$
in $L^{1}(\HilbertRaum)$
(\cf
  \cite[Theorem~2.4.17]{Murphy1990}%
)
and the continuity of CPTP\=/maps in the $L^{1}$-norm,
that

  \begin{restoremargins}
  \begin{equation}
  \label{eq:property:universal-cptp:sig:article-graph-raj-dahya}
    \Psi_{H_{1},H_{2}}(\ketbra{w_{1}}{w_{2}})
    = w_{1}\:w_{2}^{\ast}
  \end{equation}
  \end{restoremargins}

\continueparagraph
for all $w_{1},w_{2} \in \HilbertRaum = L^{2}(H_{1} \otimes H_{2}, H_{2})$.

We can now formulate our first main result:

\begin{highlightbox}
\begin{thm}[Representation of CPCB FP\=/maps]
\makelabel{thm:choi-cholesky-rep:sig:article-graph-raj-dahya}
  Let $H_{1}$ be a separable Hilbert space
  and $H_{2}$ an arbitrary Hilbert space.
  Further let $\{\BaseVector{i}\}_{i \in I}$ be an orthonormal basis (ONB) for $H_{1}$,
  whereby $I = \naturals$ or $\{1,2,\ldots,N\}$ for some $N\in\naturals$
  and set $\HilbertRaum \coloneqq L^{2}(H_{1} \otimes H_{2}, H_{2})$.
  Consider an FP\=/map
  ${
    \Phi:L^{1}(H_{1}) \to L^{1}(H_{2})
  }$.
  Then $\Phi$ is a CPCB- (\resp CPCC- \resp CPTP-) map
  if and only if

  \begin{restoremargins}
  \begin{equation}
  \label{eq:choi-cholesky-representation:sig:article-graph-raj-dahya}
    \Phi = \Psi_{H_{1},H_{2}} \circ \ad{V_{\Phi}}
  \end{equation}
  \end{restoremargins}

  \continueparagraph
  for a bounded operator (\resp contraction \resp an isometry)
  ${V_{\Phi} : H_{1} \to \HilbertRaum}$.
\end{thm}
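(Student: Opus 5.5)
The plan is to prove the two directions separately. Sufficiency is essentially formal; necessity rests on a factorisation of the Choi block matrix, obtained canonically via a block Cholesky algorithm.

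\textbf{Sufficiency.} Suppose $\Phi = \Psi_{H_{1},H_{2}} \circ \ad{V_{\Phi}}$ with $V_{\Phi}$ bounded. The map $\ad{V_{\Phi}} : L^{1}(H_{1}) \to L^{1}(\HilbertRaum)$ is CP, since $\id_{n} \otimes \ad{V_{\Phi}} = \ad{\onematrix \otimes V_{\Phi}}$ preserves positivity. Moreover $\norm{\ad{V_{\Phi}}}_{\text{cb}} \leq \norm{V_{\Phi}}^{2}$.

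Since $\Psi_{H_{1},H_{2}}$ is CPTP, hence CC, and these classes are closed under composition, $\Phi$ is CPCB. If $V_{\Phi}$ is a contraction, $\Phi$ is moreover CPCC. If $V_{\Phi}$ is an isometry, then $\tr(V_{\Phi} s V_{\Phi}^{\ast}) = \tr(V_{\Phi}^{\ast}V_{\Phi} s) = \tr(s)$, so $\ad{V_{\Phi}}$ is TP and therefore so is $\Phi$.

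\textbf{Necessity: reduction to a factorisation.} By \eqcref{eq:property:universal-cptp:sig:article-graph-raj-dahya}, it suffices to find vectors $w_{i} \in \HilbertRaum$, $i \in I$, with
\[
  \Phi(\ketbra{\BaseVector{i}}{\BaseVector{j}}) = w_{i}\,w_{j}^{\ast}
  \quad\text{for all } i,j \in I.
\]
One then defines $V_{\Phi}\BaseVector{i} \coloneqq w_{i}$ and extends linearly.

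To see that this extension is bounded, take a finitely supported $x = \sum_{i} c_{i}\BaseVector{i}$. Then
\[
  \norm{\textstyle\sum_{i} c_{i} w_{i}}_{2}^{2}
  = \tr\bigl(\textstyle\sum_{i,j} c_{i}\bar{c}_{j}\, w_{i} w_{j}^{\ast}\bigr)
  = \tr\Phi(\ketbra{x}{x})
  \leq \norm{\Phi}\,\norm{x}^{2}.
\]
So $V_{\Phi}$ extends to a bounded operator on $H_{1}$. By construction $\Phi(\ketbra{x}{y}) = \Psi_{H_{1},H_{2}}(\ad{V_{\Phi}}\ketbra{x}{y})$ on a dense set of rank-one operators, and $L^{1}$-continuity extends this to all of $L^{1}(H_{1})$.

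The same computation gives the refined cases. If $\Phi$ is CC, then $\norm{\Phi} \leq 1$ and $V_{\Phi}$ is a contraction. If $\Phi$ is TP, then equality $\norm{V_{\Phi}x}^{2} = \norm{x}^{2}$ holds, so $V_{\Phi}$ is an isometry.

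\textbf{Necessity: constructing the $w_{i}$.} Consider the Choi block matrix $C = \bigl(\Phi(\ketbra{\BaseVector{i}}{\BaseVector{j}})\bigr)_{i,j \in I}$. It acts on $\bigoplus_{i} H_{2} \cong H_{1} \otimes H_{2}$ and is positive because $\Phi$ is CP: each finite truncation is positive, and this suffices for the factorisation below.

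I would produce a lower-triangular block factor $L = (L_{ij})$ with $C = LL^{\ast}$. Then $w_{i} \coloneqq (L_{i1}, L_{i2}, \ldots)$ is the $i$-th block row, viewed as an operator $H_{1} \otimes H_{2} \to H_{2}$. This $w_{i}$ is automatically Hilbert--Schmidt, because $w_{i}w_{i}^{\ast} = \Phi(\ketbra{\BaseVector{i}}{\BaseVector{i}})$ is trace class.

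The factor would be built by the block Cholesky recursion, using Schur complements:
\[
  L_{jj} = S_{j}^{1/2},
  \qquad
  L_{ij} = \Bigl(C_{ij} - \sum_{k<j} L_{ik}L_{jk}^{\ast}\Bigr)\,(S_{j}^{1/2})^{\dagger}
  \quad (i > j),
\]
where $S_{j}$ is the $j$-th Schur complement, a positive operator. Because only finitely many earlier columns enter each entry, the recursion is explicit and Borel-measurable, and it involves no choices.

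\textbf{Main obstacle.} The hard part is justifying this recursion in infinite dimensions. Two things must be shown.

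First, the Moore--Penrose pseudo-inverses must exist. Here the FP hypothesis is essential: all blocks, and hence all Schur complements, are finite rank, so they have closed range and $(S_{j}^{1/2})^{\dagger}$ is bounded.

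Second, the identity $C_{ij} = \sum_{k} L_{ik}L_{jk}^{\ast}$ must hold even though the pseudo-inverse replaces a genuine inverse. For this I would prove inductively that $\ran\bigl(C_{ij} - \sum_{k<j}L_{ik}L_{jk}^{\ast}\bigr)^{\ast} \subseteq \ran S_{j}$, via the positivity of the $2 \times 2$ block compressions of $C$. This inclusion is exactly what makes $L_{ij}S_{j}^{1/2}$ recover the required operator.
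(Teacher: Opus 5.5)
Your overall route is the paper's. Sufficiency goes by composition. Necessity reduces to a resolution $\Phi(\ElementaryMatrix{i}{j}) = w_{i}w_{j}^{\ast}$ via $\Psi_{H_{1},H_{2}}(\ketbra{w_{1}}{w_{2}}) = w_{1}w_{2}^{\ast}$, bounds $V_{\Phi}$ through $\norm{V_{\Phi}x}^{2} = \operatorname{tr}\Phi(\ketbra{x}{x})$, and concludes by $L^{1}$-density. The $w_{i}$ are block rows of a block Cholesky factor of the finite Choi matrices, with FP supplying the pseudo-inverses. Your recursion even returns the paper's entries. Writing $X_{ij} \coloneqq C_{ij} - \sum_{k<j}L_{ik}L_{jk}^{\ast}$, one has $L_{ij} = X_{ij}(S_{j}^{1/2})^{\dagger} = (X_{ij}S_{j}^{\dagger})\,S_{j}^{1/2}$, which is $\hat{L}_{ij}\sqrt{D_{jj}}$ with $D_{jj} = S_{j}$. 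A minor point: the infinite block matrix need not be a bounded operator on $H_{1}\otimes H_{2}$ (e.g.\ $\Phi = \id$ with $H_{1}$ infinite-dimensional). This is harmless, since you only use truncations.

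The genuine gap is the step you call the main obstacle. It comes together with the unproved assertion that $S_{j}$ is positive, which you need before $S_{j}^{1/2}$ even makes sense. Both $S_{j}\geq\zeromatrix$ and $\operatorname{ran}X_{ij}^{\ast}\subseteq\operatorname{ran}S_{j}$ are statements about the generalised Schur complement $\begin{smatrix} S_{j} & X_{ij}^{\ast}\\ X_{ij} & \cdot \end{smatrix}$ of the compression of $C$ to the indices $\{1,\ldots,j\}\cup\{i\}$. They cannot be extracted from $2\times 2$ compressions of $C$ to index pairs. For scalar blocks, $C = \begin{smatrix} 1 & 1 & 1/2\\ 1 & 1 & -1/2\\ 1/2 & -1/2 & 1 \end{smatrix}$ has all $2\times 2$ principal compressions positive, yet your recursion gives $S_{2} = 0$ and $X_{32} = -1$. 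Perhaps you meant a $2\times 2$ splitting into the indices below $j$ versus $\{j,i\}$. Then you still need a Schur-complement lemma with pseudo-inverses, plus the identification of $\sum_{k<j}L_{ik}L_{jk}^{\ast}$ with an entry of $C_{JK}C_{KK}^{\dagger}C_{KJ}$, and that rests on the same unproved inclusions. The textbook block-elimination argument is also unavailable with your $L$ as it stands. Its diagonal blocks $S_{k}^{1/2}$ may be singular, so there is no invertible triangular operator by which to conjugate $C$.

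The missing idea is the paper's $LDL^{\ast}$ normalisation. A lower uni-triangular $\hat{L}'$ with finitely supported $\hat{L}'-\onematrix$ is always invertible with uni-triangular inverse (\Cref{prop:triangular:inverse:sig:article-graph-raj-dahya}). Hence $A \coloneqq (\hat{L}')^{-1}C^{(n)}(\hat{L}')^{-\ast}$ is positive by congruence. Its diagonal entries at earlier indices are $D'_{jj}$ (your $S_{j}$), and the off-diagonal entries of its new row are $\widetilde{W}_{i_{0},j}$ (your $X_{i_{0}j}$). Apply the Cauchy--Schwarz estimate (\Cref{prop:cauchy-schwarz:sig:article-graph-raj-dahya}, i.e.\ the majorisation lemma) to $A$ rather than to $C$. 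This yields $X_{i_{0}j} = X_{i_{0}j}P_{j}$, with $P_{j}$ the projection onto $\operatorname{ran}S_{j}$, which is exactly your inclusion. Positivity of the new pivot then follows from $D = \hat{L}^{-1}C^{(n)}\hat{L}^{-\ast}$. You can repair your argument by carrying $\hat{L}_{ij} \coloneqq X_{ij}S_{j}^{\dagger}$ alongside $L$ and running this induction. With that added, the rest of your proposal goes through.
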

\end{highlightbox}



Our second main result
exploits the manner in which the representations in
\Cref{thm:choi-cholesky-rep:sig:article-graph-raj-dahya}
are constructed
to address the main question posed at the start.
Consider the class
$\mathbb{O}$
of operations on operators on (finite-dimensional) Hilbert spaces
obtained via compositions of:
  constants (the identity, elementary operators, \etcetera),
  addition,
  scalar multiplication,
  operator multiplication,
  Hermitian conjugation,
  tensor products,
  square roots of positive operators,
  and pseudo-inverses
  of finite rank positive operators.%
\footnote{%
  \cf Appendix \ref{app:spectral+mb:sig:article-graph-raj-dahya} for definitions.
}
Note that under the norm topology,
each of these operations, and thus also their compositions,
are Borel-measurable,
provided the underlying Hilbert spaces are separable.%
\footnote{%
  For the reader's convenience,
  proofs of the measurability
  of the latter two operations
  have been provided in the appendix,
  see
  \Cref{e.g.:norm-measurability-of-sqrt:sig:article-graph-raj-dahya}
  and
  \Cref{prop:existence-of-pinv:sig:article-graph-raj-dahya}.
  The remaining operations are clearly continuous
  and do not require separability.
}
We note further that in the finite-dimensional setting,
the operations in the class $\mathbb{O}$
can all be practically implemented via modern programming languages.

\begin{highlightbox}
\begin{thm}[Properties of dilations]
\makelabel{thm:choi-cholesky-rep-computability:sig:article-graph-raj-dahya}
  Working with the setup of \Cref{thm:choi-cholesky-rep:sig:article-graph-raj-dahya},
  let
  $
    \mathcal{X}
    \subseteq
    \BoundedOps{L^{1}(H_{1})}{L^{1}(H_{2})}
  $
  be the space of CPCB FP\=/maps
  endowed with the strong operator topology ($\topSOT$),%
  \footnoteref{ft:1:\beweislabel}
  and
  $
    \mathcal{Y}
    \coloneqq \BoundedOps{\HilbertRaum}
  $
  be the space of bounded operators
  endowed with the weak operator topology ($\topWOT$).
  A map
    ${\mathfrak{C} : \mathcal{X} \to \mathcal{Y}}$
  can be chosen such that the following hold:

  \begin{enumerate}[
      label={\bfseries{(\alph*)}},
      ref={\alph*},
      left=\rtab,
  ]
    \item\punktlabel{1}
      For each CPCB (\resp CPCC \resp CPTP) FP\=/map $\Phi$,
      $V_{\Phi} \coloneqq \mathfrak{C}(\Phi)$
      is a bounded operator
      (\resp a contration \resp an isometry)
      satisfying \eqcref{eq:choi-cholesky-representation:sig:article-graph-raj-dahya}.

    \item\punktlabel{2}
      For each $\Phi \in \mathcal{X}$ and $n \in I$,
      the element $\mathfrak{C}(\Phi)\BaseVector{n}$
      viewed as an operator can be constructed
      from
      $\{\Phi(\ElementaryMatrix{i}{j})\}_{i,j=1}^{n}$
      via explicitly definable operations from $\mathbb{O}$.

    \item\punktlabel{3}
      If $H_{2}$ is separable,
      then the restriction of $\mathfrak{C}$
      to the subspace of CPCC\=/maps
      is Borel-measurable.
  \end{enumerate}

  \nvraum{1}
\end{thm}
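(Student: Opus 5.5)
The plan is to define $\mathfrak{C}$ explicitly via the Choi matrix and a bi-partite Cholesky algorithm, and then check the three properties one at a time.

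\textbf{Step 1 (Choi data).} For $\Phi\in\mathcal{X}$ I would form the blocks $c_{ij}\coloneqq\Phi(\ElementaryMatrix{i}{j})\in\FiniteRankOps{H_{2}}$, $i,j\in I$. These blocks are finite rank because $\Phi$ is FP. The truncations $C_{n}\coloneqq\sum_{i,j\leq n}\ElementaryMatrix{i}{j}\otimes c_{ij}$ are the Choi matrices of $\Phi$ restricted to $\linspann\{\BaseVector{1},\ldots,\BaseVector{n}\}$. By complete positivity each $C_{n}$ is positive.

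\textbf{Step 2 (Cholesky recursion).} The goal is to produce operators $v_{n}\in\HilbertRaum=L^{2}(H_{1}\otimes H_{2},H_{2})$ with $v_{i}v_{j}^{\ast}=c_{ij}$ for all $i,j$. Given such $v_{n}$, I set $V_{\Phi}\BaseVector{n}\coloneqq v_{n}$. By \eqcref{eq:property:universal-cptp:sig:article-graph-raj-dahya},
\[
  \Psi_{H_{1},H_{2}}(\ad{V_{\Phi}}\ElementaryMatrix{i}{j})=v_{i}v_{j}^{\ast}=\Phi(\ElementaryMatrix{i}{j}).
\]
The identity \eqcref{eq:choi-cholesky-representation:sig:article-graph-raj-dahya} then follows by linearity and $L^{1}$-continuity, since $\Phi$ is bounded.

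The $v_{n}$ come from a block-lower-triangular Cholesky recursion. I would define $v_{n}\coloneqq\sum_{k\leq n}\bra{\BaseVector{k}}\otimes\ell_{nk}$ with $\ell_{nk}\in\FiniteRankOps{H_{2}}$, which needs the following.
\begin{itemize}
  \item For $k<n$: $\ell_{nk}\coloneqq(c_{nk}-\sum_{m<k}\ell_{nm}\ell_{km}^{\ast})\,\ell_{kk}^{\dagger\ast}$.
  \item For the diagonal: $\ell_{nn}\coloneqq(c_{nn}-\sum_{m<n}\ell_{nm}\ell_{nm}^{\ast})^{1/2}$.
\end{itemize}
Each $\ell_{kk}$ is a positive finite-rank operator, so its pseudo-inverse exists and is built from $\mathbb{O}$. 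This gives \punktcref{2} directly.

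\textbf{Step 3 (main obstacle).} I expect the key lemma, and the main obstacle, to be verifying that the recursion actually reproduces $c_{nk}$. This amounts to two facts.
\begin{itemize}
  \item The Schur complements $c_{nn}-\sum_{m<n}\ell_{nm}\ell_{nm}^{\ast}$ are positive.
  \item The ranges satisfy $\ran(c_{nk}-\sum_{m<k}\ell_{nm}\ell_{km}^{\ast})^{\ast}\subseteq\overline{\ran}\,\ell_{kk}$, so that multiplying by $\ell_{kk}^{\dagger\ast}\ell_{kk}^{\ast}$ (the range projection) loses nothing.
\end{itemize}
I would prove both by induction on $n$ using the standard $2\times2$ block criterion: $\begin{smatrix}A&B^{\ast}\\B&D\end{smatrix}\geq0$ if and only if $A\geq0$, $B=B\,A^{\dagger}A$ and $D-BA^{\dagger}B^{\ast}\geq0$. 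Here $A$ is finite rank, so the pseudo-inverse is harmless. I would apply this to the positive matrix $C_{n}$, using that the $C_{n-1}$ block equals $L_{n-1}L_{n-1}^{\ast}$ by the induction hypothesis.

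\textbf{Step 4 (boundedness and the three classes).} The $v_{n}$ are only defined on basis vectors, so boundedness of $V_{\Phi}$ must be shown separately. For finite combinations $\xi=\sum a_{n}\BaseVector{n}$ I would compute
\[
  \norm{V_{\Phi}\xi}_{2}^{2}
  =\tr\Big(\sum_{i,j}\bar a_{i}a_{j}\,v_{j}^{\ast}v_{i}\Big)
  =\tr\Big(\sum_{i,j}\bar a_{i}a_{j}\,v_{i}v_{j}^{\ast}\Big)
  =\tr\Phi(\ketbra{\xi}{\xi}).
\]
This is at most $\norm{\Phi}\norm{\xi}^{2}$, so $V_{\Phi}$ is bounded. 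If $\Phi$ is CC the bound gives a contraction, and if $\Phi$ is TP it gives equality, so $V_{\Phi}$ is an isometry. This proves \punktcref{1}.

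The converse direction of \Cref{thm:choi-cholesky-rep:sig:article-graph-raj-dahya} is immediate. $\ad{V}$ is CP with $\norm{\ad{V}}_{\text{cb}}\leq\norm{V}^{2}$, and it is TP when $V$ is an isometry. $\Psi_{H_{1},H_{2}}$ is CPTP, and compositions preserve these classes.

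\textbf{Step 5 (measurability, \punktcref{3}).} The space $\mathcal{Y}$ carries the $\topWOT$, so I would reduce to showing that each matrix coefficient $\Phi\mapsto\braket{\eta}{\mathfrak{C}(\Phi)\xi}$ is Borel. On the CPCC class $\norm{V_{\Phi}}\leq1$, so the image lies in a bounded set. On bounded sets the $\topWOT$ Borel structure is generated by the coefficients against the countable dense families $\xi=\BaseVector{n}$ and $\eta$ from a countable dense subset of $\HilbertRaum$, which is separable because $H_{2}$ is.

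For each $n$, the map $\Phi\mapsto(c_{ij})_{i,j\leq n}$ is $\topSOT$-continuous into the trace class. Composing it with the Borel operations of $\mathbb{O}$, namely the square root and the pseudo-inverse from the appendix, gives Borel measurability of $\Phi\mapsto v_{n}$ in norm. Hence each coefficient is Borel, and $\mathfrak{C}$ is Borel-measurable on the CPCC maps.
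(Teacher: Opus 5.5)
Your proposal is correct and follows the paper's proof in all essentials. It uses the same Choi blocks $\Phi(\ElementaryMatrix{i}{j})$, a bi-partite Cholesky recursion whose only non-algebraic steps are square roots and pseudo-inverses of finite-rank positive diagonal blocks, $V_{\Phi}\BaseVector{n}=\sum_{k\leq n}\bra{\BaseVector{k}}\otimes\ell_{nk}$, the identity $\Psi_{H_{1},H_{2}}(\ketbra{w_{1}}{w_{2}})=w_{1}w_{2}^{\ast}$ plus $L^{1}$-density, the norm computation via $\tr\Phi(\ketbra{\xi}{\xi})$, and WOT matrix coefficients for measurability. The differences are cosmetic: you run the $LL^{\ast}$ forward substitution and justify it by the generalised Schur-complement criterion, whereas the paper conjugates by $(\hat{L}^{\prime})^{-1}$ and uses the majorisation lemma to build $\hat{L}D\hat{L}^{\ast}$; by the uniqueness lemma both give the same entries $L^{(\Phi)}_{i,j}$ and hence the same $\mathfrak{C}$ (Step~4 has a harmless conjugation slip and should read $\norm{V_{\Phi}\xi}_{2}^{2}=\tr\sum_{i,j}a_{i}\bar{a}_{j}\,v_{i}v_{j}^{\ast}$).
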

\end{highlightbox}

\footnotetext[ft:1:\beweislabel]{%
  \idest the topology given by
  ${\Phi^{(\alpha)} \underset{\alpha}{\overset{\tinytopSOT}{\longrightarrow}} \Phi}$
  if and only if
  ${\norm{\Phi^{(\alpha)}(s) - \Phi(s)}_{1} \underset{\alpha}{\longrightarrow} 0}$
  for all $s \in L^{1}(H_{1})$.
}



\begin{rem}[Unitarity and relation to Kraus representations]
  Restricting to the CPTP case,
  the above dilation can be easily modified in the usual manner
  to obtain a unitary action,
  \exempli by replacing
  $\HilbertRaum$ by $\HilbertRaum' \coloneqq L^{2}(H_{1} \otimes H_{2}, H_{2}) \otimes \complex^{2}$,
  $\mathfrak{C}(\Phi) = V_{\Phi}$
  by

    \begin{shorteqnarray}
      \mathfrak{C}'(\Phi)
      \coloneqq
        U_{\Phi}
      \coloneqq
      \begin{matrix}{cc}
        V_{\Phi}
          &(\onematrix - V_{\Phi}\:V_{\Phi}^{\ast})
          \\
        \zeromatrix
          &V_{\Phi}^{\ast}
      \end{matrix},
    \end{shorteqnarray}

  \continueparagraph
  (see
    \cite{Halmos1950dilation},
    \cf also
    \cite[\S{}1.3]{Choi2004LectureNotes},
    \cite[\S{}1]{Shalit2021DilationBook}%
  )
  and
  $\Psi_{H_{1},H_{2}}$
  by
  $\Psi' \coloneqq \tr_{2,3,4} \circ \ad{\theta \otimes \onematrix}$.
  Under these modifications one can readily demonstrate that

    \begin{shorteqnarray}
      \Phi(s)
      =
        \Psi'(\ad{U_{\Phi}}(s \otimes \ketbra{\BaseVector{1}}{\BaseVector{1}}))
      \eqcrefoverset{eq:defn:universal-cptp:sig:article-graph-raj-dahya}{=}
        \tr_{2,3,4}(
          \ad{
            (\theta\otimes\onematrix)
            \:U_{\Phi}
          }
          (s \otimes \ketbra{\BaseVector{1}}{\BaseVector{1}})
        )
    \end{shorteqnarray}

  \continueparagraph
  for $s \in L^{1}(H_{1})$.
  \Cref{thm:choi-cholesky-rep:sig:article-graph-raj-dahya}
  thus provides an alternative root to establishing
  Kraus's \Second representation theorem \eqcref{eq:kraus:II:sig:article-graph-raj-dahya}.
  It is also a straightforward exercise to verify that the claims
  in \Cref{thm:choi-cholesky-rep-computability:sig:article-graph-raj-dahya}
  continue to hold with $\mathfrak{C}$
  replaced by $\mathfrak{C}'$ restricted to CPTP FP\=/maps.
\end{rem}



\begin{rem}[Related works]
  At the time of circulation,
  recent work \cite{BelovDubovIvanov2026Misc}
  in \highlightTerm{quantum channel tomography}%
  \footnote{%
    an important area of research
    with applications to quantum computing,
    concerned with estimations and approximate inversions
    of CPTP\=/maps
    \cite{%
        BenderskyPaz2013Article,%
        BaldwinKalevDeutsch2014Article%
    }.
  }
  was brought to our attention.
  In contrast to our aims
  (the establishment of explicit algorithms to derive canonical representations of CPTP\=/maps),
  the work in \cite{BelovDubovIvanov2026Misc}
  focusses on formulating channel reconstruction as a semi-definite programming problem.
  To this end, the authors similarly apply Cholesky decompositions to Choi matrices
  (\cf expression 16 and Appendix A).
  Their decomposition consists of multiple stages including QR-decomposition,
  and is applied to matrices with flattened dimensions.
  Our constructions by contrast consist of a single stage,
  preserve the bi-partite structure,
  and extend coherently as the dimension of $H_{1}$ is increased
  (\cf \Cref{rem:coherence:sig:article-graph-raj-dahya} below).
\end{rem}




\section[Bi-partite systems]{Bi-partite systems}
\label{sec:bipartite:sig:article-graph-raj-dahya}

\firstparagraph
The main instrument we shall use to study CP\=/maps
are the so-called \highlightTerm{Choi matrices} (see below),
which are operators defined on tensor products.
This section is thus dedicated to providing groundwork to work with such operators.

Throughout we shall consider Hilbert spaces $H_{1}$ and $H_{2}$
as well as an ONB $\{\BaseVector{i}\}_{i \in I}$ for $H_{1}$
indexed by a linearly ordered set $(I, \leq)$,
\exempli
  $I = \naturals$
  or
  $\{1,2,\ldots,N\}$
  for some $N \in \naturals$.
We shall also make use of the following definitions:

\begin{itemize}
  \item
    For each $i,j \in I$ define
    $
      \ElementaryMatrix{i}{j}
      \coloneqq
      \ketbra{\BaseVector{i}}{\BaseVector{j}}
      \in
      L^{1}(H_{1})
      \subseteq
      \BoundedOps{H_{1}}
    $.

  \item
    Given a bounded operator $C \in \BoundedOps{H_{1} \otimes H_{2}}$
    we define

      \begin{shorteqnarray}
        C_{i,j}
        \coloneqq
          (\bra{\BaseVector{i}} \otimes \onematrix)
          \:C
          \:(\ket{\BaseVector{j}} \otimes \onematrix)
        =
          (\ket{\BaseVector{i}} \otimes \onematrix)^{\ast}
          \:C
          \:(\ket{\BaseVector{j}} \otimes \onematrix)
        \in
          \BoundedOps{H_{2}}
      \end{shorteqnarray}

    \continueparagraph
    for each $i,j \in I$.

  \item
    We shall say that an operator
      $C \in \BoundedOps{H_{1} \otimes H_{2}}$
    has
    \highlightTerm{finite support}
    if
      $C_{i,j} = \zeromatrix$
    for all $(i,j) \in (I \times I) \without (F \times F)$
    and some finite $F \subseteq I$,
    and we define $\supp C$ to be the smallest set $F \subseteq I$
    for which this condition holds.
    Letting $F \subseteq I$ be finite,
    it is easy to verify that
      $\supp C \subseteq F$
    if and only if
    $C$ can be expressed as

      \begin{restoremargins}
      \begin{equation}
      \label{eq:bipartite:sig:article-graph-raj-dahya}
        C
        =
        \sum_{i,j \in F}
          \ElementaryMatrix{i}{j} \otimes C_{i,j}.
      \end{equation}
      \end{restoremargins}

    \continueparagraph
    For this reason we shall refer to each
    $
      C_{i,j}
      \coloneqq
      (\bra{\BaseVector{i}} \otimes \onematrix)
      \:C
      \:(\ket{\BaseVector{j}} \otimes \onematrix)
    $
    as the $(i,j)$-th (operator-valued) \highlightTerm{entry} of $C$.

  \item
    It is a simple exercise to verify
    that algebraic operations on operators with finite
    support correspond to matrix operations.
    That is,
    letting $C,C^{\prime} \in \BoundedOps{H_{1} \otimes H_{2}}$
    with $\supp C \subseteq F$
    and $\supp C^{\prime} \subseteq F$
    for some finite $F \subseteq I$,
    one has

      \begin{restoremargins}
      \begin{equation}
      \label{eq:bipartite:operations:sig:article-graph-raj-dahya}
      \everymath={\displaystyle}
      \begin{array}[m]{rcl}
        C^{\ast}
          &=
            &\sum_{i,j \in F}
              \ElementaryMatrix{i}{j}
              \otimes
              C_{j,i}^{\ast},
            \\
        C + C^{\prime}
          &=
            &\sum_{i,j \in F}
              \ElementaryMatrix{i}{j}
              \otimes
              (C_{i,j} + C^{\prime}_{j,i}),
              \quad\text{and}
            \\
        C \: C^{\prime}
          &=
            &\sum_{i,j \in F}
              \ElementaryMatrix{i}{j}
              \otimes
              \sum_{k \in F}
                C_{i,k}\:C^{\prime}_{k,j}.
      \end{array}
      \end{equation}
      \end{restoremargins}

    In particular
      $\supp(C^{\ast}) \subseteq F$,
      $\supp(C + C^{\prime}) \subseteq F$,
      and
      $\supp(C \: C^{\prime}) \subseteq F$.
\end{itemize}


\subsection[Choi--Jamio{\l}kowski correspondence]{Choi\==Jamio{\l}kowski correspondence}
\label{sec:choi:sig:article-graph-raj-dahya}


\firstparagraph
We now present standard tools to analyse CP(TP)\=/maps.
For a linear transformation
${\Phi : L^{1}(H_{1}) \to L^{1}(H_{2})}$
and finite $F \subseteq I$,
letting

  \begin{restoremargins}
  \begin{equation}
  \label{eq:choi-definitions:sig:article-graph-raj-dahya}
  \everymath={\displaystyle}
  \begin{array}[m]{rcl}
    \mathcal{O}^{(F)}
      &\coloneqq
        &\sum_{i \in F}
          \BaseVector{i} \otimes \BaseVector{i}
        \in L^{1}(H_{1} \otimes H_{1}),
      \\
    \mathcal{E}^{(F)}
      &\coloneqq
        &\ketbra{\mathcal{O}^{(F)}}{\mathcal{O}^{(F)}}
      =
        \sum_{i, j \in F}
          \ElementaryMatrix{i}{j}
          \otimes
          \ElementaryMatrix{i}{j}
      \in L^{1}(H_{1} \otimes H_{1}),
      \quad\text{and}
      \\
    \Choi{\Phi}^{(F)}
      &\coloneqq
          &(\id \otimes \Phi)(\mathcal{E}^{(F)})
      =
          \sum_{i, j \in F}
              \ElementaryMatrix{i}{j}
              \otimes
              \Phi(\ElementaryMatrix{i}{j})
      \in L^{1}(H_{1} \otimes H_{2}),
  \end{array}
  \end{equation}
  \end{restoremargins}

\continueparagraph
we refer to $\{\Choi{\Phi}^{(F)}\}_{F}$ as the \highlightTerm{Choi matrices}
associated to $\Phi$.

If $H_{1}$ is finite-dimensional, it is well-known that
${\Phi \mapsto \Choi{\Phi} \coloneqq \Choi{\Phi}^{(I)}}$,
referred to as the \highlightTerm{Choi\==Jamio{\l}kowski isomorphism}
(\cf
  \cite[\S{}4.1]{Stoermer2013BookPosOps}%
),
constitutes a bijection between
$\BoundedOps{\BoundedOps{H_{1}}}{\BoundedOps{H_{2}}}$
and
$\BoundedOps{H_{1} \otimes H_{2}}$.
In particular, one can verify that

\begin{restoremargins}
\begin{equation}
\label{eq:inverse-choi:sig:article-graph-raj-dahya}
  \Phi(s)
    =
      (\ket{\mathcal{O}} \otimes \onematrix)^{\ast}
      \:(s \otimes \Choi{\Phi})
      \:(\ket{\mathcal{O}} \otimes \onematrix)
\end{equation}
\end{restoremargins}

\continueparagraph
for all $s \in \BoundedOps{H_{1}}$,
where $\mathcal{O} \coloneqq \mathcal{O}^{(I)}$.
Choi matrices allow for natural characterisations of properties of linear operations,
a selection of which are summarised
in \Cref{table:correspondence:choi:sig:article-graph-raj-dahya}.
The first correspondence here is due to
de~Pilles
  \cite[Proposition~1.2]{DePillis1967Article},
the second owes to
Choi
  \cite[Theorem~2]{Choi1975Article},
and the final to Jamio{\l}kowski
  \cite[Theorems~1--2]{Jamiolkowski1972Article}.%
\footnote{%
  \cf also
  \cite[Theorem~4.1.8]{Stoermer2013BookPosOps},
  \cite[\S{}I and \S{}IV]{JiangLuoFu2013ArticleChoiMatrices},
  \cite[Propositions~5--6]{HomaOrtegaKoniorczyk2024Article}.
}

\begin{table}[!htb]
  \begin{tabular}[t]{|p{0.25\textwidth}|p{0.25\textwidth}|}
    \hline
      Property of $\Phi$
      &Property of $\Choi{\Phi}$
      \\
    \hline
    \hline
      Hermitian%
      \footnoteref{ft:1:table:correspondence:choi:sig:article-graph-raj-dahya}
      &self-adjoint
      \\
      completely positive
      &positive
      \\
      trace-preserving
      &$\tr_{2}(\Choi{\Phi}) = \onematrix$
      \\
    \hline
  \end{tabular}
  \caption{%
    Correspondence of properties between linear operations
    ${\Phi : \protect\BoundedOps{H_{1}} \to \protect\BoundedOps{H_{2}}}$
    and their Choi matrices
    $\Choi{\Phi} \in \protect\BoundedOps{H_{1} \otimes H_{2}}$
    under the assumption that $H_{1}$ is finite-dimensional.
  }
  \label{table:correspondence:choi:sig:article-graph-raj-dahya}
\end{table}

\footnotetext[ft:1:table:correspondence:choi:sig:article-graph-raj-dahya]{%
  \idest $\Phi(a) \in \BoundedOps{H_{2}}$
  is self-adjoint
  for all self-adjoint elements $a \in \BoundedOps{H_{1}}$.
}



Relying on these dualities we can immediately derive some useful properties about CP(TP)\=/maps
defined for Hilbert spaces of arbitrary dimensions.

\begin{highlightbox}
\begin{lemm}[Choi\==Jamio{\l}kowski correspondence]
\makelabel{lemm:choi-jamiolkowski:sig:article-graph-raj-dahya}
  Let ${\Phi : L^{1}(H_{1}) \to L^{1}(H_{2})}$
  be a bounded linear transformation
  and define the family
  $
    \{C_{i,j} \coloneqq \Phi(\ElementaryMatrix{i}{j})\}_{i,j \in I}
    \subseteq
    L^{1}(H_{2})
  $
  of trace-class operators.
  Then $\Phi$ constitutes a CP- (\resp CPTP-) map
  if and only if
  \punktcref{1} holds
  (\resp \punktcref{1} and \punktcref{2} hold),
  where:

  \begin{enumerate}[
    label={\bfseries{(\alph*)}},
    ref={\alph*},
    left=\rtab,
  ]
    \item\punktlabel{1}
      The trace-class operators
        $
          C^{(F)}
          \coloneqq
            \Choi{\Phi}^{(F)}
          =
            \sum_{i,j \in F}
              \ElementaryMatrix{i}{j}
              \otimes
              C_{i,j}
          \in L^{1}(H_{1} \otimes H_{2})
        $
      are positive for all finite $F \subseteq I$.

    \item\punktlabel{2}
      $\tr(C_{i,j}) = \delta_{i,j}$
      for all $i,j \in I$.
  \end{enumerate}

  \nvraum{1}

\end{lemm}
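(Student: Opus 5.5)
The plan is to reduce everything to finite-dimensional compressions and then pass to the limit using boundedness of $\Phi$. For finite $F \subseteq I$, let $H_{1}^{(F)} \coloneqq \linspann\{\BaseVector{i} \mid i \in F\}$ with orthogonal projection $P_{F}$. The restriction $\Phi^{(F)} \coloneqq \Phi\restr{L^{1}(H_{1}^{(F)})}$ is a linear map from $\BoundedOps{H_{1}^{(F)}}$ to $L^{1}(H_{2})$, and its Choi matrix in the sense of \Cref{table:correspondence:choi:sig:article-graph-raj-dahya} is exactly $C^{(F)} = \Choi{\Phi}^{(F)}$. Since $H_{1}^{(F)}$ is finite-dimensional, Choi's theorem (second row of the table) gives: $\Phi^{(F)}$ is CP if and only if $C^{(F)} \geq \zeromatrix$. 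One small point needs care: the table is stated for maps into $\BoundedOps{H_{2}}$ with $H_{2}$ arbitrary. To avoid relying on that, I will simply reprove the direction I need via \eqcref{eq:inverse-choi:sig:article-graph-raj-dahya} applied on $H_{1}^{(F)}$.

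For ($\Rightarrow$), assume $\Phi$ is CP. Then $\mathcal{E}^{(F)} \geq \zeromatrix$, being a rank-one projection up to scaling, lies in $L^{1}(H_{1}\otimes H_{1})$ with finite-dimensional first factor. Hence $C^{(F)} = (\id \otimes \Phi)(\mathcal{E}^{(F)}) \geq \zeromatrix$ by complete positivity, which is (a). If $\Phi$ is also TP, then $\tr(C_{i,j}) = \tr(\Phi(\ElementaryMatrix{i}{j})) = \tr(\ElementaryMatrix{i}{j}) = \delta_{i,j}$, which is (b).

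For ($\Leftarrow$), assume (a) holds. Fix $n$ and a positive $s \in L^{1}(\complex^{n} \otimes H_{1})$. Set $s_{F} \coloneqq (\onematrix \otimes P_{F})\, s\, (\onematrix \otimes P_{F})$, which is positive and supported on $\complex^{n}\otimes H_{1}^{(F)}$. By \eqcref{eq:inverse-choi:sig:article-graph-raj-dahya} for the finite-dimensional space $H_{1}^{(F)}$, tensored with $\id_{n}$, we can write
\[
(\id_{n}\otimes\Phi)(s_{F}) = (\onematrix_{n}\otimes\ket{\mathcal{O}^{(F)}}\otimes\onematrix)^{\ast}\,(s_{F}\otimes C^{(F)})\,(\onematrix_{n}\otimes\ket{\mathcal{O}^{(F)}}\otimes\onematrix),
\]
with the tensor factors suitably permuted. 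This is a compression of the positive operator $s_{F}\otimes C^{(F)}$, hence positive. This identity is a direct computation on elementary tensors $\ElementaryMatrix{i}{j}$, using that both sides are linear in $s_{F}$.

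The next step is to let $F \uparrow I$. Then $s_{F} \to s$ in trace norm, since $P_{F}\to\onematrix$ strongly and compressions of a trace-class operator converge in $\norm{\cdot}_{1}$. Because $\Phi$ is bounded, $\id_{n}\otimes\Phi$ is bounded on $L^{1}(\complex^{n}\otimes H_{1}) \cong M_{n}(L^{1}(H_{1}))$, with a bound at most $n^{2}\norm{\Phi}$ obtained entrywise. So $(\id_{n}\otimes\Phi)(s_{F}) \to (\id_{n}\otimes\Phi)(s)$ in trace norm, and the positive cone is closed, so $(\id_{n}\otimes\Phi)(s) \geq \zeromatrix$. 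Thus $\Phi$ is CP. If (b) also holds, then $\tr\circ\Phi$ and $\tr$ are bounded linear functionals on $L^{1}(H_{1})$ that agree on every $\ElementaryMatrix{i}{j}$. By linearity they agree on the finite-rank span of these elements, and by density of that span in $L^{1}(H_{1})$ (\cite[Theorem~2.4.17]{Murphy1990}) they agree everywhere, so $\Phi$ is TP.

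I expect the main obstacle to be justifying the trace-norm approximation $s_{F}\to s$ together with the finite-dimensional Choi identity for $\id_{n}\otimes\Phi$ when $H_{2}$ is infinite-dimensional. I would handle the identity purely algebraically on finitely many matrix units, which avoids any infinite-dimensional issue. For the approximation I would first treat positive finite-rank $s$, then conclude by trace-norm density of such operators in the positive cone of $L^{1}$.
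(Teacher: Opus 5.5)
Your proof is correct. The \usesinglequotes{only if} direction and the trace-preservation argument coincide with the paper's. You differ in how you obtain complete positivity from (a).

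The paper approximates $u = \sqrt{s}$ in Hilbert--Schmidt norm by elements $u_{k}$ of the algebra $\mathcal{W}_{n}$ spanned by the $x \otimes \ElementaryMatrix{i}{j}$. It then uses the H\"older--von~Neumann inequality to get $u_{k}^{\ast}u_{k} \to s$ in trace norm. Finally, it exhibits positivity of $(\id_{n}\otimes\Phi)(u^{\ast}u)$ through the explicit sum $\sum_{k} X_{k}^{\ast}(\onematrix \otimes C^{(F)})X_{k}$.

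You instead compress $s$ itself to $s_{F} = (\onematrix\otimes P_{F})\,s\,(\onematrix\otimes P_{F})$. This keeps positivity for free and converges to $s$ in $\norm{\cdot}_{1}$ by a standard fact. You then read off positivity of $(\id_{n}\otimes\Phi)(s_{F})$ from the inverse Choi formula, as a single compression of the positive operator $s_{F}\otimes C^{(F)}$. Checking on $x\otimes\ElementaryMatrix{k}{l}$ confirms that compressing by $\onematrix_{n}\otimes\ket{\mathcal{O}^{(F)}}\otimes\onematrix$ returns $x\otimes C_{k,l}$. In fact no permutation of tensor factors is needed.

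What each route buys:
\begin{itemize}
  \item Your route avoids square roots, the $L^{2}$-density of $\mathcal{W}_{n}$, and the H\"older estimate. The price is using that tensor products of positive operators are positive and that $P_{F}\,s\,P_{F}\to s$ in trace norm.
  \item The paper's route yields an explicit Kraus-like sum for the finitely supported pieces.
\end{itemize}

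One point in your favour: the paper justifies boundedness of $\id_{n}\otimes\Phi$ by appealing to complete boundedness of CP maps. In the \usesinglequotes{if} direction this presupposes the conclusion. Only boundedness of each $\id_{n}\otimes\Phi$ separately is needed, and your entrywise bound $n^{2}\norm{\Phi}$ supplies exactly that.

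Your closing fallback via positive finite-rank $s$ is unnecessary. The compression argument already handles arbitrary positive trace-class $s$.
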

\end{highlightbox}

  \begin{proof}
    Towards the \usesinglequotes{only if}-direction:
    To show \punktcref{1},
    let $F \subseteq I$ be finite
    and consider the finite-dimensional subspace
    $H^{(F)}_{1} \coloneqq \linspann\{\BaseVector{i} \mid i \in F\}$
    of $H_{1}$.
    One has that
      $
        \Choi{\Phi}^{(F)}
        = (\id_{H^{(F)}_{1}} \otimes \Phi)(\mathcal{E}^{(F)})
      $,
    whereby $\mathcal{E}^{(F)} \in L^{1}(H^{(F)}_{1} \otimes H_{2})$
    is a positive element.
    By definition of $\Phi$ being completely positive,
    it follows that $\Choi{\Phi}^{(F)} \geq \zeromatrix$.
    In the CPTP case, \punktcref{2}
    follows by trace-preservation,
    since
    $
      \tr_{2}(C_{i,j})
      = \tr(\Phi(\ElementaryMatrix{i}{j}))
      = \tr(\ElementaryMatrix{i}{j})
      = \delta_{ij}
    $
    for $i,j \in I$.

    Towards the \usesinglequotes{if}-direction,
    first note that by boundedness and complete positivity,
    $\Phi$ is necessarily \emph{completely} bounded
    \wrt the $L^{1}$-norm
    (\cf
      \cite[\S{}2,~(2.21)]{Kraus1971Article},
      \cite[Proposition~3.6]{Paulsen2002book}%
    ).
    Suppose now that \punktcref{1} holds.
    To prove complete positivity,
    let $n \in \naturals$
    and $s \in L^{1}(\complex^{n} \otimes H_{1})$
    be arbitrary
    with $s \geq \zeromatrix$.
    Set $u \coloneqq \sqrt{s} \in L^{2}(\complex^{n} \otimes H_{1})$.
    Note that
    $
      \mathcal{W}_{n}
      = \linspann\{
        x \otimes \ElementaryMatrix{i}{j}
        \mid
        x \in \Matr_{n \times n}(\complex),
        \:i,j \in I
      \}
      \subseteq \FiniteRankOps{\complex^{n} \otimes H_{1}}
    $
    is an $L^{2}$-dense subspace of
    $L^{2}(\complex^{n} \otimes H_{1})$.%
    \footnoteref{ft:L1-density:\beweislabel}
    \footnotetext[ft:L1-density:\beweislabel]{%
      this can be proved
      by relying on
      the H{\"o}lder\==von~Neumann inequality,
      \cf also
      \cite[Theorem~2.4.17]{Murphy1990}%
    }
    There thus exist elements
    $\{u_{k}\}_{k \in \naturals} \subseteq \mathcal{W}_{n}$
    for which
    ${\norm{u_{k} - u}_{2} \underset{k}{\longrightarrow} 0}$.
    For each $k \in \naturals$ we have
    $
      u_{k}^{\ast}\:u_{k}
      \in \mathcal{W}_{n}
    $,
    since $\mathcal{W}_{n}$
    is clearly a subalgebra of $\BoundedOps{\complex^{n} \otimes H_{1}}$.
    Moreover, by the H{\"o}lder\==von~Neumann inequality
    (\cf
      \cite[Exercise~3.4.3]{Pedersen1989analysisBook}%
    )

    \begin{shorteqnarray}
      \norm{
        u_{k}^{\ast}\:u_{k}
        -
        s
      }_{1}
        &=
          &\norm{
              u_{k}^{\ast}\:u_{k}
              -
              u^{\ast}\:u
            }_{1}
          \\
        &\leq
          &\norm{(u_{k} - u)^{\ast}\:(u_{k} - u)}_{1}
            + \norm{(u_{k} - u)^{\ast}\:u}_{1}
            + \norm{u^{\ast}\:(u_{k} - u)}_{1}
          \\
        &\leq
          &\norm{(u_{k} - u)^{\ast}}_{2}\:\norm{u_{k} - u}_{2}
            + \norm{(u_{k} - u)^{\ast}}_{2}\:\norm{u}_{2}
            + \norm{u^{\ast}}_{2}\:\norm{u_{k} - u}_{2}
          \\
        &=
          &\norm{u_{k} - u}_{2}^{2}
            + 2\norm{u_{k} - u}_{2}\:\norm{u}_{2},
    \end{shorteqnarray}

    \continueparagraph
    which converges to $0$ as ${k \longrightarrow \infty}$.
    By the $L^{1}$-boundedness of $\id_{n} \otimes \Phi$
    (see above)
    and since positive trace-class operators are closed under the $L^{1}$-norm,
    it thus suffices to prove that
    $(\id_{n} \otimes \Phi)(s) \geq \zeromatrix$
    for $s \in L^{1}(\complex^{n} \otimes H_{1})$
    of the form $s = u^{\ast}\:u$ for some $u \in \mathcal{W}_{n}$.
    Letting $u \in \mathcal{W}_{n}$ be arbitrary,
    we can find a finite subset $F \subseteq I$
    as well as elements $\{x_{i,j}\}_{i,j \in F} \subseteq \Matr_{n \times n}(\complex)$
    such that
    $
      u
      = \sum_{i,j \in F}
          x_{i,j} \otimes \ElementaryMatrix{i}{j}
    $.
    Considering the positive element
    $
      s \coloneqq u^{\ast}\:u
      = \sum_{i,j,k \in F}
            x_{k,i}^{\ast}\:x_{k,j}
          \otimes
          \ElementaryMatrix{i}{j}
      \in L^{1}(\complex^{n} \otimes H_{1})
    $,
    one obtains

      \begin{shorteqnarray}
        (\id_{n} \otimes \Phi)(s)
        &=
          &\sum_{i,j,k \in F}
            x_{k,i}^{\ast}\:x_{k,j}
            \otimes
            C_{i,j}
          \\
        &=
          &\sum_{i,j,k \in F}
            x_{k,i}^{\ast}\:x_{k,j}
            \otimes
            (\bra{i} \otimes \onematrix_{H_{2}})
            \:C^{(F)}
            \:(\ket{j} \otimes \onematrix_{H_{2}})
          \\
        &=
          &\sum_{k \in F}
            \Big(
              \underbrace{
                \sum_{i \in F}
                  x_{k,i} \otimes \ket{i} \otimes \onematrix_{H_{2}}
              }_{X_{k} \coloneqq}
            \Big)^{\ast}
            \:(\onematrix_{\complex^{n}} \otimes C^{(F)})
            \:\Big(
              \underbrace{
                \sum_{j \in F}
                  x_{k,j} \otimes \ket{j} \otimes \onematrix_{H_{2}}
              }_{=X_{k}}
            \Big),
      \end{shorteqnarray}

    \continueparagraph
    which is positive since by \punktcref{1} $C^{(F)}$ is positive.
    This completes the proof of the complete positivity of $\Phi$.
    If furthermore \punktcref{2} holds,
    then one can readily verify that $\tr(\Phi(s)) = \tr(s)$
    for $
      s \in \mathcal{W}
      \coloneqq
      \linspann\{
        \ElementaryMatrix{i}{j}
        \mid i,j \in I
      \}
      \subseteq L^{1}(H_{1})
    $.
    By the $L^{1}$-density%
    \footnoteref{ft:L1-density:\beweislabel}
    of $\mathcal{W}$
    and the assumed $L^{1}$-boundedness of $\Phi$,
    it follows that $\Phi$ is trace-preserving.
  \end{proof}

The characterisation of CP(TP)\=/maps in \Cref{lemm:choi-jamiolkowski:sig:article-graph-raj-dahya}
thus motivates us to study bi-partite systems
and families of trace-class operators
$\{C_{i,j}\}_{i,j \in I} \subseteq L^{1}(H_{2})$
satisfying
\eqcref{it:1:lemm:choi-jamiolkowski:sig:article-graph-raj-dahya}
\resp
  \eqcref{it:1:lemm:choi-jamiolkowski:sig:article-graph-raj-dahya}
  and
  \eqcref{it:2:lemm:choi-jamiolkowski:sig:article-graph-raj-dahya}.




\subsection[Positive operators]{Positive operators}
\label{sec:positive:sig:article-graph-raj-dahya}

\firstparagraph
We now present some properties of positive operators on bi-partite systems.
Consider an arbitrary operator
$C \in \BoundedOps{H_{1} \otimes H_{2}}$
and assume that $C$ is positive,
in symbols $C \geq \zeromatrix$.%
\footnote{%
  when referring to operators on Hilbert spaces,
  recall that
  \highlightTerm{positive}
  means \highlightTerm{positive semi-definite}.
}
Observe immediately that
$
  C_{j,i}
  = (\ket{\BaseVector{j}} \otimes \onematrix)^{\ast}
    \:C
    \:(\ket{\BaseVector{i}} \otimes \onematrix)
  = \Big(
    (\ket{\BaseVector{i}} \otimes \onematrix)^{\ast}
    \:C
    \:(\ket{\BaseVector{j}} \otimes \onematrix)
    \Big)^{\ast}
  = C_{i,j}^{\ast}
$
and
$
  C_{i,i}
  = (\ket{\BaseVector{i}} \otimes \onematrix)^{\ast}
    \:C
    \:(\ket{\BaseVector{i}} \otimes \onematrix)
  \geq \zeromatrix
$
for each $i,j \in I$.
The following result is a simple consequence
of Cauchy\==Schwarz inequalities.


\begin{prop}
\makelabel{prop:cauchy-schwarz:sig:article-graph-raj-dahya}
  Let $C \in \BoundedOps{H_{1} \otimes H_{2}}$.
  If $C \geq \zeromatrix$, then

  \begin{restoremargins}
  \begin{equation}
  \label{eq:ineq-positive-bipartite:sig:article-graph-raj-dahya}
    \abs{\braket{\eta}{C_{i,j}\:\xi}}
    \leq
    \norm{\sqrt{C_{i,i}}\:\eta}
    \norm{\sqrt{C_{j,j}}\:\xi}
  \end{equation}
  \end{restoremargins}

  \continueparagraph
  for all $i,j \in I$,
  and $\xi, \eta \in H_{2}$.
  In particular
  $C_{i,j} = P_{i}\:C_{i,j}\:P_{j}$
  for all $i,j \in I$,
  where $P_{k} \coloneqq \Proj_{\quer{\ran}(C_{k,k})}$
  for each $k \in I$.
\end{prop}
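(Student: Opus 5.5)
The plan is to apply the Cauchy\==Schwarz inequality for the positive sesquilinear form induced by $C$ on $H_{1} \otimes H_{2}$, namely $(u,v) \mapsto \braket{u}{C\,v}$. Since $C \geq \zeromatrix$, this form is positive semi-definite, and so
\[
  \abs{\braket{u}{C\,v}}^{2} \leq \braket{u}{C\,u}\,\braket{v}{C\,v}
  \quad\text{for all } u, v \in H_{1} \otimes H_{2}.
\]

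For the inequality \eqcref{eq:ineq-positive-bipartite:sig:article-graph-raj-dahya}, fix $i,j \in I$ and $\xi,\eta \in H_{2}$. Set $u \coloneqq \BaseVector{i} \otimes \eta = (\ket{\BaseVector{i}} \otimes \onematrix)\eta$ and $v \coloneqq \BaseVector{j} \otimes \xi = (\ket{\BaseVector{j}} \otimes \onematrix)\xi$. By the definition of the entries $C_{i,j}$ we then have $\braket{u}{C\,v} = \braket{\eta}{C_{i,j}\,\xi}$. Likewise $\braket{u}{C\,u} = \braket{\eta}{C_{i,i}\,\eta} = \norm{\sqrt{C_{i,i}}\,\eta}^{2}$, which uses $C_{i,i} \geq \zeromatrix$ (observed just before the proposition). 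Similarly $\braket{v}{C\,v} = \norm{\sqrt{C_{j,j}}\,\xi}^{2}$. Taking square roots in the Cauchy\==Schwarz inequality gives the claim.

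For the second statement, the key observation is $\ker C_{k,k} = \ker \sqrt{C_{k,k}}$, together with $\quer{\ran}(C_{k,k}) = (\ker C_{k,k})^{\perp}$, which holds because $C_{k,k}$ is self-adjoint. Then $\onematrix - P_{k}$ is the projection onto $\ker C_{k,k}$.

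We first show $C_{i,j}(\onematrix - P_{j}) = \zeromatrix$. Let $\xi \in \ker C_{j,j}$. Then $\sqrt{C_{j,j}}\,\xi = 0$, so the inequality yields $\braket{\eta}{C_{i,j}\,\xi} = 0$ for all $\eta$, hence $C_{i,j}\xi = 0$. Thus $C_{i,j} = C_{i,j}P_{j}$.

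Next we show $(\onematrix - P_{i})C_{i,j} = \zeromatrix$. Let $\eta \in \ker C_{i,i}$. Then the inequality gives $\braket{\eta}{C_{i,j}\xi} = 0$ for all $\xi$, so $\ran C_{i,j} \perp \ker C_{i,i}$ and therefore $P_{i}C_{i,j} = C_{i,j}$. Alternatively, one can apply the first part to $C_{j,i} = C_{i,j}^{\ast}$ and take adjoints.

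Combining both facts gives $C_{i,j} = P_{i}C_{i,j}P_{j}$. There is no real obstacle here; the only point requiring care is the identification of kernels and closed ranges for the positive operators $C_{k,k}$.
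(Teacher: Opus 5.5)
Your proposal is correct and follows essentially the paper's route. The paper derives the inequality by expanding $0 \leq \braket{x}{C\:x}$ for $x = \alpha\,\BaseVector{i}\otimes\eta - z\beta\,\BaseVector{j}\otimes\xi$ and optimising over $\alpha,\beta>0$, which is just an inline proof of the Cauchy--Schwarz inequality for the form $(u,v)\mapsto\braket{u}{C\:v}$ that you cite directly, and it deduces $C_{i,j} = P_{i}\:C_{i,j}\:P_{j}$ from that inequality exactly as you do (your version additionally spells out the identification $\ker\sqrt{C_{k,k}} = \ker C_{k,k} = (\quer{\ran}(C_{k,k}))^{\perp}$, which the paper uses tacitly).
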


  \begin{proof}
    Choose $z \in \complex$ with $\abs{z} = 1$
    such that
    $
      z^{\ast}\braket{\eta}{C_{i,j}\:\xi}
      = \abs{\braket{\eta}{C_{i,j}\:\xi}}
    $.
    For $\alpha,\beta > 0$,
    setting
      $x \coloneqq \alpha \BaseVector{i} \otimes \eta - z \beta \BaseVector{j} \otimes \xi$
    positivity of $C$ yields

      \begin{shorteqnarray}
        0
          &\leq
            &\braket{x}{C\:x}
          \\
          &= &\alpha^{2}\braket{\eta}{C_{i,i}\:\eta}
            + \beta^{2}\braket{\xi}{C_{j,j}\:\xi}
            - z^{\ast}\alpha\beta\braket{\eta}{C_{i,j}\:\xi}
            - z\alpha\beta\braket{\xi}{C_{j,i}\:\eta}
          \\
          &\overset{(\ast)}{=}
            &\alpha^{2}\braket{\eta}{C_{i,i}\:\eta}
            + \beta^{2}\braket{\xi}{C_{j,j}\:\xi}
            - z^{\ast}\alpha\beta\braket{\eta}{C_{i,j}\:\xi}
            - z\alpha\beta\braket{C_{i,j}\:\xi}{\eta}
          \\
          &=
            &\alpha^{2}\braket{\eta}{C_{i,i}\:\eta}
            + \beta^{2}\braket{\xi}{C_{j,j}\:\xi}
            - 2\alpha\beta \Re(z^{\ast}\braket{\eta}{C_{i,j}\:\xi})
          \\
          &=
            &\alpha^{2}\braket{\eta}{C_{i,i}\:\eta}
            + \beta^{2}\braket{\xi}{C_{j,j}\:\xi}
            - 2\alpha\beta\abs{\braket{\eta}{C_{i,j}\:\xi}}
      \end{shorteqnarray}

    \continueparagraph
    whereby ($\ast$) holds by virtue of $C$ being positive (see above).
    Thus

      \begin{restoremargins}
      \begin{equation}
      \label{eq:0:\beweislabel}
        \abs{\braket{\eta}{C_{i,j}\:\xi}}
          \leq
            \frac{
              \alpha^{2}\braket{\eta}{C_{i,i}\:\eta}
              + \beta^{2}\braket{\xi}{C_{j,j}\:\xi}
            }{2\alpha\beta}
      \end{equation}
      \end{restoremargins}

    \continueparagraph
    for all $\alpha, \beta > 0$.

    If $\braket{\eta}{C_{i,i}\:\eta} = 0$,
    letting $\beta > 0$ be arbitrary
    and $\alpha = \beta^{-1}$,
    \eqcref{eq:0:\beweislabel}
    yields
    $
      \abs{\braket{\eta}{C_{i,j}\:\xi}}
      \leq
      \frac{1}{2}\beta^{2}\braket{\xi}{C_{j,j}\:\xi}
    $.
    Since $\beta$ can be chosen to be arbitrarily small,
    one obtains
    $
      \abs{\braket{\eta}{C_{i,j}\:\xi}}
      = 0
      = \sqrt{
        \braket{\eta}{C_{i,i}\:\eta}
        \braket{\xi}{C_{j,j}\:\xi}
      }
    $.
    If $\braket{\xi}{C_{j,j}\:\xi} = 0$,
    then one similarly obtains
    $
      \abs{\braket{\eta}{C_{i,j}\:\xi}}
      = 0
      = \sqrt{
        \braket{\eta}{C_{i,i}\:\eta}
        \braket{\xi}{C_{j,j}\:\xi}
      }
    $.
    Otherwise plugging
      $\alpha \coloneqq \frac{1}{\sqrt{\braket{\eta}{C_{i,i}\:\eta}}}$
      and
      $\beta \coloneqq \frac{1}{\braket{\xi}{C_{j,j}\:\xi}}$
    into \eqcref{eq:0:\beweislabel}
    yields
    $
      \abs{\braket{\eta}{C_{i,j}\:\xi}}
      \leq
        \frac{1+1}{2\alpha\beta}
      = \sqrt{
        \braket{\eta}{C_{i,i}\:\eta}
        \braket{\xi}{C_{j,j}\:\xi}
      }
    $.
    So in all cases, and since $C_{i,i}$ and $C_{j,j}$ are positive,
    the expression in
    \eqcref{eq:ineq-positive-bipartite:sig:article-graph-raj-dahya}
    follows.

    The inequality in
    \eqcref{eq:ineq-positive-bipartite:sig:article-graph-raj-dahya}
    implies that
      $
        \braket{\eta}{C_{i,j}\:(\onematrix - P_{j})\:\xi} = 0
      $
    for $\xi, \eta \in H_{2}$.
    So $C_{i,j}\:(\onematrix - P_{j}) = \zeromatrix$,
    which implies that $C_{i,j} = C_{i,j}\:P_{j}$.
    Similar reasoning yields
      $(\onematrix - P_{i})\:C_{i,j} = \zeromatrix$
    and thus $C_{i,j} = P_{i}\:C_{i,j}$.
    The final claim thus follows.
  \end{proof}



\begin{highlightbox}
\begin{lemm}[Majorisation lemma]
\makelabel{lemm:majorisation:sig:article-graph-raj-dahya}
  Let $C \in \BoundedOps{H_{1} \otimes H_{2}}$ be positive
  for which each $C_{k,k} \in \FiniteRankOps{H_{2}}$.
  Then for each $i,j \in I$

  \begin{restoremargins}
  \begin{equation}
  \label{eq:radon-nikodym:sig:article-graph-raj-dahya}
    C_{i,j}
    = \hat{C}_{i,j}
      \:C_{j,j}
  \end{equation}
  \end{restoremargins}

  \continueparagraph
  for some $\hat{C}_{i,j} \in \BoundedOps{H_{2}}$.
  In particular, one can choose
  $\hat{C}_{i,j} \coloneqq C_{i,j}\:C_{j,j}^{\dagger}$.
\end{lemm}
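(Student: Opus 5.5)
The plan is to deduce the factorisation directly from \Cref{prop:cauchy-schwarz:sig:article-graph-raj-dahya}, combined with basic facts about the Moore\==Penrose pseudo-inverse of a finite rank positive operator.

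First fix $j \in I$ and write $P_{j} \coloneqq \Proj_{\quer{\ran}(C_{j,j})}$. Since $C_{j,j} \geq \zeromatrix$ has finite rank, its range is finite-dimensional and hence closed, so $\quer{\ran}(C_{j,j}) = \ran(C_{j,j})$. We may therefore apply the spectral theorem (a finite diagonalisation) to $C_{j,j}$ restricted to $\ran(C_{j,j})$, on which it acts invertibly. The pseudo-inverse $C_{j,j}^{\dagger}$ (see the appendix) is then the bounded operator that equals the inverse of $C_{j,j}|_{\ran(C_{j,j})}$ on $\ran(C_{j,j})$ and vanishes on $\ker(C_{j,j}) = \ran(C_{j,j})^{\perp}$.

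The key identity to record is
\begin{equation*}
  C_{j,j}^{\dagger}\:C_{j,j} = C_{j,j}\:C_{j,j}^{\dagger} = P_{j}.
\end{equation*}
This can be checked separately on $\ran(C_{j,j})$ and on $\ker(C_{j,j})$, using the self-adjointness of $C_{j,j}$.

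Next, by \Cref{prop:cauchy-schwarz:sig:article-graph-raj-dahya}, positivity of $C$ gives $C_{i,j} = P_{i}\:C_{i,j}\:P_{j}$, so in particular $C_{i,j} = C_{i,j}\:P_{j}$. Setting $\hat{C}_{i,j} \coloneqq C_{i,j}\:C_{j,j}^{\dagger}$, which is bounded as a product of bounded operators, we get
\begin{equation*}
  \hat{C}_{i,j}\:C_{j,j}
  = C_{i,j}\:C_{j,j}^{\dagger}\:C_{j,j}
  = C_{i,j}\:P_{j}
  = C_{i,j},
\end{equation*}
which is exactly \eqcref{eq:radon-nikodym:sig:article-graph-raj-dahya}.

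No step here is genuinely hard. The only point needing care is the identity $C_{j,j}^{\dagger}C_{j,j} = P_{j}$, and this is precisely where the finite rank hypothesis enters. Without it, $\ran(C_{j,j})$ need not be closed, and the pseudo-inverse would be unbounded, so $\hat{C}_{i,j}$ would fail to lie in $\BoundedOps{H_{2}}$. If the appendix defines $C^{\dagger}$ differently, for instance via a spectral projection and functional calculus, I will instead verify the identity through the functional calculus. In that formulation one uses $f(t) = t^{-1}\mathbf{1}_{t>0}$, which is bounded on the finite spectrum, and the identity follows from $f(t)\,t = \mathbf{1}_{t>0}$.
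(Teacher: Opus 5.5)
Your proposal is correct and follows essentially the same route as the paper. The paper also uses the existence of $C_{j,j}^{\dagger}$ with $C_{j,j}^{\dagger}C_{j,j} = \Proj_{\quer{\ran}(C_{j,j})}$ for finite rank positive $C_{j,j}$ (proved in the appendix via the finite spectral decomposition, as you sketch), and combines it with $C_{i,j} = C_{i,j}P_{j}$ from the Cauchy\==Schwarz proposition to conclude $C_{i,j} = C_{i,j}\:C_{j,j}^{\dagger}\:C_{j,j}$.
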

\end{highlightbox}

  \begin{proof}
    For each $k \in I$, since $C$ is positive, so too is $C_{k,k}$.
    Since the latter is assumed to have finite rank,
    the pseudo-inverse $C_{k,k}^{\dagger}$ exists,
    noting that $C_{k,k}^{\dagger}C_{k,k} = P_{k}$,
    where $P_{k} \coloneqq \Proj_{\quer{\ran}(C_{k,k})}$
    (\cf Appendix \ref{app:spectral+mb:sig:article-graph-raj-dahya}).
    By \Cref{prop:cauchy-schwarz:sig:article-graph-raj-dahya}
    it follows that
      $
        C_{i,j}
        = C_{i,j}\:P_{j}
        = C_{i,j}\:(C_{j,j}^{\dagger}\:C_{j,j})
      $
    for each $i,j \in I$.
  \end{proof}

\begin{rem}
\makelabel{rem:majorisation-fp-necessary:sig:article-graph-raj-dahya}
  Without the finite rank assumption,
  one may apply Douglas's Lemma
  \cite[Theorem~1]{Douglas1966Article}
  to obtain a slightly weaker result,
  \viz
  $C_{i,j} = R_{i,j}\:\sqrt{C_{j,j}}$
  for some operator $R_{i,j} \in \BoundedOps{H_{2}}$.
  In general, however, the finite rank requirement in
  is essential to obtain
  \eqcref{eq:radon-nikodym:sig:article-graph-raj-dahya}:
  Consider $H_{1} = \complex^{2}$
  and $H_{2} = \ell^{2}(\naturals)$
  with the standard ONBs
  $\{\BaseVector{1}, \BaseVector{2}\}$
  \resp
  $\{\BaseVector{n}\}_{n\in\naturals}$.
  For $\alpha \in (0,\:1)$
  let $G_{\alpha} \coloneqq \sum_{n\in\naturals}\alpha^{n}\ketbra{n}{n}$.
  One can easily verify that
  $
    C
    \coloneqq
    \begin{smatrix}
        G_{1/2} &G_{1/\sqrt{6}}\\
        G_{1/\sqrt{6}} &G_{1/3}
    \end{smatrix}
  $
  is positive
  with each $C_{k,k}$ having infinite rank.
  Suppose $C_{1,2} = A\:C_{2,2}$ for some bounded operator $A \in \BoundedOps{H_{2}}$.
  Then
  $
    \sqrt{6}^{-n}
    = \braket{\BaseVector{n}}{C_{1,2}\:\BaseVector{n}}
    = \braket{\BaseVector{n}}{A\:C_{2,2}\:\BaseVector{n}}
    = 3^{-n}\braket{\BaseVector{n}}{A\:\BaseVector{n}}
  $
  and thus
  $\norm{A} \geq \sqrt{9/6}^{n}$
  for all $n \in \naturals$,
  which contradicts the boundedness of $A$.
\end{rem}




\subsection[Lower triangular operators]{Lower triangular operators}
\label{sec:triangular:sig:article-graph-raj-dahya}

\firstparagraph
In the proof of the main theorem, our goal shall be
to develop a Cholesky-like decomposition
for positive operators defined on bi-partite systems.
In order to achieve this we require some basic definitions and results
for lower triangular operators.
Recall that we have fixed an ONB $\{\BaseVector{i}\}_{i \in I}$ for $H_{1}$
where $I$ is a linearly ordered index set.


We call an operator
$D \in \BoundedOps{H_{1} \otimes H_{2}}$
\highlightTerm{diagonal},
if $D_{i,j} = \zeromatrix$
for all $i,j \in I$
with $i \neq j$.
It is easy to check that a diagonal operator
$D \in \BoundedOps{H_{1} \otimes H_{2}}$
is positive,
if and only if $D_{i,i} \geq \zeromatrix$
for all $i \in I$.
Say that an operator
$L \in \BoundedOps{H_{1} \otimes H_{2}}$
is \highlightTerm{(strictly) lower triangular},
if $L_{i,j} = \zeromatrix$
for all $i,j \in I$
with $j > i$
(\resp $j \geq i$).
An lower triangular operator
$\hat{L} \in \BoundedOps{H_{1} \otimes H_{2}}$
shall be called \highlightTerm{uni\=/triangular}
if $\hat{L}_{i,i} = \onematrix$.
In other words
$\hat{L}$ is lower uni\=/triangular
if and only if
$\hat{L} - \onematrix$ is strictly lower triangular.
And we say that
$L \in \BoundedOps{H_{1} \otimes H_{2}}$
is \highlightTerm{scaled lower triangular}
if $L = \hat{L}D$
where $\hat{L}$ is lower uni\=/triangular
and $D$ is diagonal and positive.



The following are well understood facts about triangular matrices,
simply reformulated for the setting of triangular operators defined over bi-partite systems.
We present them for completeness.

\begin{prop}
\makelabel{prop:triangular:support-product:sig:article-graph-raj-dahya}
  Let $F \subseteq I$ be finite.
  If
    $L, L^{\prime} \in \BoundedOps{H_{1} \otimes H_{2}}$
  are strictly lower triangular
  with
    $\supp(L) \subseteq F$
    and
    $\supp(L^{\prime}) \subseteq F$,
  then
    $\supp(L + L^{\prime}) \subseteq F$
    and
    $\supp(L \: L^{\prime}) \subseteq F$.
  And if
    $\hat{L}, \hat{L}^{\prime} \in \BoundedOps{H_{1} \otimes H_{2}}$
  are lower uni\=/triangular
  with
    $\supp(\hat{L} - \onematrix) \subseteq F$
    and
    $\supp(\hat{L}^{\prime} - \onematrix) \subseteq F$,
  then
    $\supp(\hat{L} \: \hat{L}^{\prime} - \onematrix) \subseteq F$.
\end{prop}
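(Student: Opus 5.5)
The first two claims follow directly from the support calculus for finitely supported operators. The third reduces to those two once $\hat{L}$ and $\hat{L}^{\prime}$ are written as the identity plus a strictly lower triangular part.

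For the strictly lower triangular case, I would invoke \eqcref{eq:bipartite:operations:sig:article-graph-raj-dahya}. Since $\supp L \subseteq F$ and $\supp L^{\prime} \subseteq F$, it gives $\supp(L + L^{\prime}) \subseteq F$ and $\supp(L\:L^{\prime}) \subseteq F$ immediately. For completeness I would also record that both operators remain strictly lower triangular. For the sum this is clear entrywise. For the product, the entries are $(L\:L^{\prime})_{i,j} = \sum_{k \in F} L_{i,k}\:L^{\prime}_{k,j}$. A summand can be nonzero only if $k < i$ and $j < k$, which forces $j < i$. So for $j \geq i$ every summand vanishes.

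For the uni\=/triangular case, I would set $N \coloneqq \hat{L} - \onematrix$ and $N^{\prime} \coloneqq \hat{L}^{\prime} - \onematrix$. These are strictly lower triangular with supports contained in $F$. Expanding the product gives
\begin{equation*}
  \hat{L}\:\hat{L}^{\prime} - \onematrix
  = N + N^{\prime} + N\:N^{\prime}.
\end{equation*}
By the first part, $N\:N^{\prime}$ has support in $F$. A further application of the sum statement (twice) then yields $\supp(\hat{L}\:\hat{L}^{\prime} - \onematrix) \subseteq F$.

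The only point needing care is that \eqcref{eq:bipartite:operations:sig:article-graph-raj-dahya} is stated for operators whose supports lie in $F$, and this hypothesis holds for $N$ and $N^{\prime}$. There is no real obstacle. The one subtlety I would check is that the expression of an operator with support in $F$ via \eqcref{eq:bipartite:sig:article-graph-raj-dahya} is legitimate, so that the matrix-multiplication formula applies in the bi-partite setting.
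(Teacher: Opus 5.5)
Your proposal is correct and follows the paper's proof: the first two inclusions come from the entrywise sum and product formulas for finitely supported operators at the start of \S{}2. The uni\=/triangular claim then follows from $\hat{L}\:\hat{L}^{\prime} - \onematrix = N + N^{\prime} + N\:N^{\prime}$ with $N = \hat{L} - \onematrix$ and $N^{\prime} = \hat{L}^{\prime} - \onematrix$, exactly as you do, and your extra check that $N\:N^{\prime}$ stays strictly lower triangular makes the appeal to the proposition's own sum statement fully legitimate.
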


  \begin{proof}
    Towards the first claims \cf the observations
    made in \eqcref{eq:bipartite:operations:sig:article-graph-raj-dahya}
    at the start of \S{}\ref{sec:bipartite:sig:article-graph-raj-dahya}
    The second claim follows from these inclusions,
    since
    $
      \hat{L} \: \hat{L}^{\prime} - \onematrix
      = L + L^{\prime} + L \:L^{\prime}
    $
    where
    $
      L \coloneqq \hat{L} - \onematrix
    $
    and
    $
      L^{\prime} \coloneqq \hat{L}^{\prime} - \onematrix
    $
    are strictly lower triangular
    with
      $\supp(L) \subseteq F$
      and
      $\supp(L^{\prime}) \subseteq F$.
  \end{proof}

\begin{prop}
\makelabel{prop:triangular:inverse:sig:article-graph-raj-dahya}
  Let $\hat{L} \in \BoundedOps{H_{1} \otimes H_{2}}$
  be lower uni\=/triangular.
  If the strictly lower triangular operator
    $\hat{L} - \onematrix$
  has finite support,
  then $\hat{L}$ is invertible.
  Moreover, $\hat{L}^{-1}$ is lower uni\=/triangular
  with
  $
    \supp (\hat{L}^{-1} - \onematrix)
    = \supp (\hat{L} - \onematrix)
  $.
\end{prop}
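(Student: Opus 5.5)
Set $L \coloneqq \hat{L} - \onematrix$, which is strictly lower triangular with $\supp(L) \subseteq F$ for some finite $F \subseteq I$. The plan is to show that $L$ is nilpotent and then invert $\hat{L} = \onematrix + L$ by a finite Neumann series. By \eqcref{eq:bipartite:operations:sig:article-graph-raj-dahya}, $L$ lives on the finite block $F \times F$, so $L = \sum_{i,j \in F} \ElementaryMatrix{i}{j} \otimes L_{i,j}$ with $L_{i,j} = \zeromatrix$ whenever $j \geq i$.

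\textbf{Nilpotency.} Let $m \coloneqq \card{F}$. Iterating the product formula in \eqcref{eq:bipartite:operations:sig:article-graph-raj-dahya} gives
\begin{shorteqnarray}
  (L^{k})_{i,j}
  = \sum_{k_{1},\ldots,k_{k-1} \in F}
      L_{i,k_{1}}\:L_{k_{1},k_{2}} \cdots L_{k_{k-1},j}.
\end{shorteqnarray}
A summand can be nonzero only if $i > k_{1} > \cdots > k_{k-1} > j$, that is, only if $F$ contains a strictly decreasing chain of $k+1$ elements. For $k \geq m$ no such chain exists, so $L^{m} = \zeromatrix$. This also shows that every power $L^{k}$ is strictly lower triangular. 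By \Cref{prop:triangular:support-product:sig:article-graph-raj-dahya}, each power additionally satisfies $\supp(L^{k}) \subseteq F$.

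\textbf{Inverse.} Define $M \coloneqq \sum_{k=0}^{m-1} (-L)^{k}$. The sum telescopes:
\begin{shorteqnarray}
  (\onematrix + L)\:M
  = M\:(\onematrix + L)
  = \onematrix - (-L)^{m}
  = \onematrix.
\end{shorteqnarray}
Hence $\hat{L}$ is invertible with $\hat{L}^{-1} = M$. Moreover, $M - \onematrix = \sum_{k=1}^{m-1} (-L)^{k}$ is a sum of strictly lower triangular operators supported in $F$. So $\hat{L}^{-1}$ is lower uni\=/triangular, and by \Cref{prop:triangular:support-product:sig:article-graph-raj-dahya} we have $\supp(\hat{L}^{-1} - \onematrix) \subseteq \supp(\hat{L} - \onematrix)$.

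\textbf{Equality of supports.} This is the only step needing care, since cancellations in $\sum_{k=1}^{m-1} (-L)^{k}$ are a priori conceivable. The cleanest route is symmetry rather than a direct look at the entries. The operator $\hat{L}^{-1}$ is itself lower uni\=/triangular with finitely supported strictly lower part, so the argument above applies to it. Since $(\hat{L}^{-1})^{-1} = \hat{L}$, this yields the reverse inclusion $\supp(\hat{L} - \onematrix) \subseteq \supp(\hat{L}^{-1} - \onematrix)$, and the two supports coincide.

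The only point to state explicitly is that $\supp$ is defined as the smallest $F$, so applying the inclusion with $F$ equal to the actual support gives exactly the inclusion between supports. I do not expect any genuine obstacle here; the content is just the nilpotency count.
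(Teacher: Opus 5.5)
Your argument is correct, but it produces the inverse by a different route from the paper. The paper builds the strictly lower part $R$ of the inverse entry by entry, via a forward-substitution recursion over $F$. It checks that $\onematrix - R$ is a right inverse of $\hat{L}$. It then runs the same construction on $\onematrix - R$ to get a right inverse of that operator, and uses the ring identity ($ab = 1 = bc$ forces $a = c$) to make the inverse two-sided. The equality of supports is then obtained by exactly the symmetry trick you use.

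Your finite Neumann series gives a two-sided inverse in one line. Lower uni-triangularity and the inclusion $\supp(\hat{L}^{-1} - \onematrix) \subseteq \supp(\hat{L} - \onematrix)$ can be read off directly from the path expansion of $L^{k}$, with no sign bookkeeping. (As printed, the paper's recursion needs a minus sign in front of the sum; the update in the second goal of the appendix algorithm has it right.) What the paper's recursion buys is an explicit entrywise procedure. The appendix algorithm implements it, and the subsequent remark uses it to see that row $i$ of $\hat{L}^{-1}$ depends only on the leading $i \times i$ block of $\hat{L}$. Your expansion yields the same locality, since $(L^{k})_{i,j}$ only involves entries $L_{a,b}$ with $i \geq a > b \geq j$.

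One small repair is needed. With $m = \card{F}$ the case $F = \emptyset$ fails: $(-L)^{0} = \onematrix$, and the empty sum $M = \zeromatrix$ is not an inverse. This case genuinely arises when you apply the inclusion with $F$ equal to the actual support and $\hat{L} = \onematrix$. Summing up to $k = \card{F}$ instead fixes it, since then $(\onematrix + L)M = \onematrix - (-L)^{\card{F}+1}$. Moreover $L^{\card{F}+1} = \zeromatrix$ for every finite $F$, including $F = \emptyset$.
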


  \begin{proof}
    Let $F \coloneqq \supp(\hat{L} - \onematrix)$.
    We recursively define for each $i \in F$
    \wrt the ordering on $I$

    \begin{restoremargins}
    \begin{equation}
    \label{eq:lower-uni-tri-inverse:sig:article-graph-raj-dahya}
      R_{i,j}
        \coloneqq
            \sum_{
              k \in F:
              ~j < k < i
            }
                \hat{L}_{i,k}
                \:R_{k,j}
            +
            \hat{L}_{i,j},
    \end{equation}
    \end{restoremargins}

    \continueparagraph
    for each $j \in F$
    with $j < i$.
    Setting
    $
      R
      \coloneqq
      \sum_{
        i,j \in F\colon
        j < i
      }
        \ElementaryMatrix{i}{j} \otimes R_{ij}
    $
    one clearly has that
    $\hat{R} \coloneqq \onematrix - R$
    is lower uni\=/triangular with
    $
      \supp (\hat{R} - \onematrix)
      = \supp(R)
      \subseteq F
      = \supp(\hat{L} - \onematrix)
    $.
    It is a simple exercise to verify that
    $\hat{R}$ is a right inverse for $\hat{L}$.
    By an analogous argument,
    the lower uni\=/triangular operator $\hat{R}$ also has a right inverse.
    Simple algebraic arguments thus yield that $\hat{R}^{-1} = \hat{L}$.%
    \footnote{%
      If elements in a unital ring $a,b,c$  satisfy
      $ab = 1 = bc$, then $a = a(bc) = (ab)c = c$.
      Thus $b$ is a left and right inverse and thus \emph{the} inverse of $a$.
    }
    By symmetry one has
    $
      \supp(\hat{L} - \onematrix)
      = \supp(\hat{R}^{-1} - \onematrix)
      \subseteq \supp(\hat{R} - \onematrix)
    $.
    Thus
    $
      \supp(\hat{L}^{-1} - \onematrix)
      = \supp(\hat{R} - \onematrix)
      = \supp(\hat{L} - \onematrix)
    $.
  \end{proof}

\begin{rem}[Determination of entries of inverse]
\makelabel{rem:compute-restrictions-of-inverse:sig:article-graph-raj-dahya}
  Suppose $I = \naturals$ or $\{1,2,\ldots,N\}$ for some $N \in \naturals$.
  Then the finite support
  satisfies
  $\supp(\hat{L} - \onematrix) \subseteq F$
  for some initial segment of indices
  $F \coloneqq \{1,2,\ldots,N'\} \subseteq I$
  for some $N' \in I$.
  By \eqcref{eq:lower-uni-tri-inverse:sig:article-graph-raj-dahya}
  for each $i \in \{1,2,\ldots,N'\}$,
  the entries
  $\{\hat{L}^{-1}_{i,j}\}_{j=1}^{i}$
  can be completely purely based on
  the entries in
  $\{\hat{L}^{-1}_{i',j'}\}_{i',j'=1}^{i}$.
\end{rem}




\subsection[Cholesky decomposition]{Cholesky decomposition}
\label{sec:cholesky:sig:article-graph-raj-dahya}


\firstparagraph
In this subsection we present a bi-partite variant
of methods to factorise positive matrices.
These methods trace their origins to unpublished works by Cholesky
(see
  \cite[\S{}4.4]{BrezinskiTournes2014InbookChapter4},
  \cite[\S{}1--3.1]{DeFalguerolles2019Article},
  \cf also
  \cite[Algorithm~23.1]{TrefethenBau1997Book}%
)
and are more closely related to \usesinglequotes{block} versions
of the classical techniques.
Given a (necessarily positive) operator
$C \in \BoundedOps{H_{1} \otimes H_{2}}$,
a \highlightTerm{bi-partite Cholesky decomposition}
of $C$ shall be taken to be any expression of the form

\begin{restoremargins}
\begin{equation}
\label{eq:cholesky:LDL-star:sig:article-graph-raj-dahya}
  C = \hat{L}\:D\:\hat{L}^{\ast}
  = L\:L^{\ast}
\end{equation}
\end{restoremargins}

\continueparagraph
where $\hat{L}$ is a lower uni\=/triangular operator,
$D$ is diagonal and positive,
and $L$ is the scaled lower triangular operator
defined by $L \coloneqq \hat{L}\:\sqrt{D}$.
Note that $\sqrt{D}$ is itself diagonal and positive
with entries $(\sqrt{D})_{i,i} = \sqrt{D_{i,i}}$ for $i \in I$.

Observe further that if $\hat{L} - \onematrix$ and $D$ have finite support,
say $\supp(D),\supp(\hat{L} - \onematrix) \subseteq F$
for some finite set $F \subseteq I$,
then so too does $L \coloneqq \hat{L}\:\sqrt{D}$
with $\supp(L) \subseteq F$,
since

\begin{restoremargins}
\begin{equation}
\label{eq:finite-support-LD:sig:article-graph-raj-dahya}
\everymath={\displaystyle}
\begin{array}[m]{rcl}
  \hat{L}\:\sqrt{D}
    &=
      &\sqrt{D} +  (\hat{L} - \onematrix)\:\sqrt{D}
      \\
    &=
      &\sum_{i \in F}
        \ElementaryMatrix{i}{i}
        \otimes
        \underbrace{
          \sqrt{D_{i,i}}
        }_{
          =\hat{L}_{i,i}\:\sqrt{D_{i,i}}
        }
      -
      \sum_{i,j \in F\colon j < i}
        \ElementaryMatrix{i}{i}
        \otimes
        \hat{L}_{i,j}
        \:\sqrt{D_{j,j}}
      \\
    &=
      &\sum_{i,j \in F\colon j \leq i}
        \ElementaryMatrix{i}{i}
        \otimes
        \hat{L}_{i,j}
        \:\sqrt{D_{j,j}}.
\end{array}
\end{equation}
\end{restoremargins}

Under appropriate assumptions, the existence and uniqueness of bi-partite decompositions
can be established analogously to the classical result for positive operators on $H_{1}$
(\cf
  \cite[\S{}2]{Rutishauser1958Article},
  \cite[\S{}8.3 and \S{}9.19]{Wilkinson1988Book}%
),
whereby the main challenge is that we are essentially treating operator- instead of scalar-valued matrices.
The key ingredient to this result is the Cauchy\==Schwarz result
in \Cref{prop:cauchy-schwarz:sig:article-graph-raj-dahya}
and its consequence in \Cref{lemm:majorisation:sig:article-graph-raj-dahya}.



\begin{highlightbox}
\begin{lemm}[Existence of bi-partite Cholesky\=/decompositions]
\makelabel{lemm:cholesky:existence:sig:article-graph-raj-dahya}
  Let
    $C \in \BoundedOps{H_{1} \otimes H_{2}}$
  be positive
  with $\supp(C) \subseteq F$
  for some finite set $F \subseteq I$.
  Letting $\mathcal{K} \coloneqq \FiniteRankOps{H_{2}}$
  denote the ideal of finite rank operators,
  if each $C_{i,j} \in \mathcal{K}$,
  then a bi-partite Cholesky decomposition
  \akin \eqcref{eq:cholesky:LDL-star:sig:article-graph-raj-dahya}
  exists such that
  $
    \supp(\hat{D}),
    \supp(\hat{L} - I)
    \subseteq F
  $
  and each $D_{i,i} \in \mathcal{K}$.
\end{lemm}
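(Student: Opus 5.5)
We proceed by induction on $\card{F}$, mimicking the block $LDL^{\ast}$ algorithm and peeling off the smallest index. If $F = \emptyset$ then $C = \zeromatrix$, and we take $\hat{L} = \onematrix$ and $D = \zeromatrix$. Otherwise let $m \coloneqq \min F$ and $F' \coloneqq F \without \{m\}$. We first set
$$D_{m,m} \coloneqq C_{m,m}.$$
This is positive since $C \geq \zeromatrix$, and it lies in $\mathcal{K}$ by hypothesis. For $i \in F'$ we set
$$\hat{L}_{i,m} \coloneqq C_{i,m}\:C_{m,m}^{\dagger}.$$
The pseudo-inverse exists because $C_{m,m}$ is a finite rank positive operator. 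By \Cref{lemm:majorisation:sig:article-graph-raj-dahya} we then have $C_{i,m} = \hat{L}_{i,m}\:D_{m,m}$. Taking adjoints gives $C_{m,i} = D_{m,m}\:\hat{L}_{i,m}^{\ast}$, since $C_{m,i} = C_{i,m}^{\ast}$ and $D_{m,m}$ is self-adjoint. We collect this first column into the lower uni-triangular operator
$$\hat{L}^{(m)} \coloneqq \onematrix + \sum_{i \in F'} \ElementaryMatrix{i}{m} \otimes \hat{L}_{i,m}.$$

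Next we form the Schur complement
$$C' \coloneqq C - \hat{L}^{(m)}\:(\ElementaryMatrix{m}{m} \otimes D_{m,m})\:\hat{L}^{(m)\ast}.$$
We compute its entries using the matrix rules \eqcref{eq:bipartite:operations:sig:article-graph-raj-dahya}, which apply because everything has support in $F$. The $(m,m)$ entry is $C_{m,m} - D_{m,m} = \zeromatrix$. For $i \in F'$, the $(i,m)$ entry is $C_{i,m} - \hat{L}_{i,m} D_{m,m} = \zeromatrix$, and by symmetry so is the $(m,i)$ entry. For $i,j \in F'$, the $(i,j)$ entry is
$$C_{i,j} - \hat{L}_{i,m}\:C_{m,m}\:\hat{L}_{j,m}^{\ast}.$$
This lies in $\mathcal{K}$, because $\mathcal{K}$ is an ideal and $C_{i,j} \in \mathcal{K}$. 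Hence $\supp(C') \subseteq F'$ and all entries of $C'$ are finite rank.

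The main obstacle is to show that $C' \geq \zeromatrix$. This is where \Cref{lemm:majorisation:sig:article-graph-raj-dahya} replaces the division by a scalar pivot used in the classical algorithm. By \Cref{prop:triangular:inverse:sig:article-graph-raj-dahya}, $\hat{L}^{(m)}$ is invertible, and its inverse is $\onematrix - \sum_{i \in F'} \ElementaryMatrix{i}{m} \otimes \hat{L}_{i,m}$. I would compute the congruence $(\hat{L}^{(m)})^{-1}\:C\:(\hat{L}^{(m)})^{-\ast}$ and aim to show it equals
$$\ElementaryMatrix{m}{m} \otimes C_{m,m} + C'.$$
The off-diagonal blocks should cancel by the identities $C_{i,m} = \hat{L}_{i,m}C_{m,m}$ and $C_{m,i} = C_{m,m}\hat{L}_{i,m}^{\ast}$. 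The $(i,j)$ blocks for $i,j \in F'$ expand to
$$C_{i,j} - \hat{L}_{i,m}C_{m,j} - C_{i,m}\hat{L}_{j,m}^{\ast} + \hat{L}_{i,m}C_{m,m}\hat{L}_{j,m}^{\ast},$$
which collapses to $C_{i,j} - \hat{L}_{i,m}C_{m,m}\hat{L}_{j,m}^{\ast}$. The congruence of a positive operator is positive, so $\ElementaryMatrix{m}{m} \otimes C_{m,m} + C'$ is positive. Compressing to the complementary part $\sum_{i \notin \{m\}} \ElementaryMatrix{i}{i} \otimes \onematrix$ then yields $C' \geq \zeromatrix$. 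The one thing to double-check here is that the pseudo-inverse enters only through the identity $C_{i,j} = C_{i,j}P_j$ from \Cref{prop:cauchy-schwarz:sig:article-graph-raj-dahya}, so that no unboundedness arises.

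By the induction hypothesis applied to $C'$, there is a decomposition $C' = \hat{L}'D'\hat{L}'^{\ast}$ with $\supp(D'), \supp(\hat{L}' - \onematrix) \subseteq F'$ and each $D'_{i,i} \in \mathcal{K}$. Since $F' \subseteq F$, we have $\supp(\hat{L}^{(m)} - \onematrix) \subseteq F$. We then set
$$D \coloneqq \ElementaryMatrix{m}{m} \otimes D_{m,m} + D', \qquad \hat{L} \coloneqq \hat{L}' + \sum_{i \in F'} \ElementaryMatrix{i}{m} \otimes \hat{L}_{i,m}.$$
Because $\hat{L}'$ has trivial $m$-th row and column apart from its diagonal entry $\onematrix$, this $\hat{L}$ agrees with the product $\hat{L}^{(m)}\hat{L}'$. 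Multiplying out gives
$$\hat{L}D\hat{L}^{\ast} = \hat{L}^{(m)}(\ElementaryMatrix{m}{m} \otimes D_{m,m})\hat{L}^{(m)\ast} + \hat{L}'D'\hat{L}'^{\ast} = C.$$
Here the cross terms vanish because the $m$-th row and column of $D'$ and of $\hat{L}' - \onematrix$ are zero. \Cref{prop:triangular:support-product:sig:article-graph-raj-dahya} then gives the support bounds $\supp(D), \supp(\hat{L} - \onematrix) \subseteq F$, which completes the induction.
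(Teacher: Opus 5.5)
Your argument is correct, but it runs the induction from the opposite end to the paper, and the two routes trade off differently. The paper removes the \emph{largest} index $i_{0} = \max F$:
\begin{itemize}
  \item it applies the induction hypothesis to the leading compression $p_{F'}\,C\,p_{F'}$, which is automatically positive with entries among the $C_{i,j}$;
  \item it conjugates by $(\hat{L}')^{-1}$, using the general triangular-inverse proposition, to obtain $A = (\hat{L}')^{-1} C (\hat{L}')^{-\ast}$;
  \item it applies the majorisation lemma to $A$ with pivots $D'_{j,j}$, producing the new bottom row $\hat{A}_{i_{0},j} = \sum_{k} C_{i_{0},k}\,((\hat{L}')^{-1})^{\ast}_{j,k}\,(D'_{j,j})^{\dagger}$;
  \item positivity of the new pivot $C_{i_{0},i_{0}} - B$ then comes for free from $D = \hat{L}^{-1} C \hat{L}^{-\ast}$.
\end{itemize}

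You instead eliminate the \emph{smallest} index by a Schur complement, i.e.\ right-looking Cholesky. The majorisation lemma is applied directly to $C$ with pivot $C_{m,m}$. The only inverse you need is the explicit $\onematrix - N$ with $N^{2} = \zeromatrix$, and the update formulas are simpler. The price is that the induction hypothesis is applied to a \emph{new} operator. You therefore have to check separately that the Schur complement is positive, which your congruence-plus-compression step does. You also have to check that its entries are finite rank, which uses the ideal property together with $C_{m,m} \in \mathcal{K}$.

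What the paper's bordering scheme buys is that the decomposition of $C$ literally extends that of its leading block by one row. This is exactly what the resolution lemma invokes when it says the decomposition of $C^{(n)}$ is derived from that of $C^{(n-1)}$, and it is what the appendix algorithm implements row by row. If your proof were substituted, that coherence would need a short extra argument. One option is the uniqueness lemma: compress $L^{(n)}(L^{(n)})^{\ast}$ to $F_{n-1}$ and use that $L^{(n)}$ is lower triangular. Another is to observe that your Schur-complement updates of the $(i,j)$ entry only ever involve $C_{k,l}$ with $k,l \leq \max\{i,j\}$.
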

\end{highlightbox}

  \begin{proof}[of \Cref{\beweislabel}]
    The claim shall be proved by induction over the size of $F$.
    For the base case, $F = \emptyset$,
    one has
    $
      C = \zeromatrix
    $,
    which is already a positive diagonal operator.
    So
    $
      C = \hat{L}\:D\:(\hat{L})^{\ast}
    $,
    where
      $D = \zeromatrix$,
      which is positive diagonal
      with $\supp(D) = \emptyset = F$,
    and
      $\hat{L} = \onematrix$,
      which is lower uni\=/triangular
      with $\supp(\hat{L} - \onematrix) = \emptyset = F$.
    And clearly each $D_{i,i} = \zeromatrix \in \mathcal{K}$.

    For the inductive case let $F \subseteq I$ with $\card{F} \geq 1$.
    Let $i_{0} \coloneqq \max F$
    and set $F^{\prime} \coloneqq F \without \{i_{0}\}$.
    Letting
    $
      p_{F^{\prime}} \coloneqq \sum_{i \in F^{\prime}}\ElementaryMatrix{i}{i} \otimes I
    $
    be the projection onto the subspace
    $
      \linspann\{\BaseVector{i} \mid i \in F^{\prime}\} \otimes H_{2}
    $
    of $H_{1} \otimes H_{2}$,
    one clearly has that
    $
      C^{\prime}
      \coloneqq
        p_{F^{\prime}}\:C\:p_{F^{\prime}}^{\ast}
      = \sum_{i,j \in F^{\prime}}
          \ElementaryMatrix{i}{j}
          \otimes
          C_{i,j}
    $
    is positive
    with $\supp(C^{\prime}) \subseteq F^{\prime}$
    and $C^{\prime}_{i,j} \in \{C_{i,j}, \zeromatrix\} \subseteq \mathcal{K}$
    for $i,j \in I$.
    By induction we may assume that $C^{\prime}$
    possesses a bi-partite Cholesky decomposition
    $
      C^{\prime} = \hat{L}^{\prime}\:D^{\prime}\:(\hat{L}^{\prime})^{\ast}
    $
    \akin \eqcref{eq:cholesky:LDL-star:sig:article-graph-raj-dahya},
    such that
    $
      \supp(D^{\prime}),
      \supp(\hat{L}^{\prime} - I)
      \subseteq F^{\prime}
    $
    and each $D^{\prime}_{i,i} \in \mathcal{K}$.
    Our goal is to extend this construction
    to a decomposition for $C$.
    To this end we make a few observations:

    \begin{enumerate}[
      label={\bfseries{\arabic*.}},
      ref={\arabic*},
      left=\rtab,
    ]
    \item
      We can express $C$ in terms of $C^{\prime}$ as follows:

        \begin{restoremargins}
        \begin{equation}
        \label{eq:0:\beweislabel}
        \everymath={\displaystyle}
        \begin{array}[m]{rcl}
          C
            &= &C^{\prime}
              + \sum_{i \in F^{\prime}}
                  \ElementaryMatrix{i}{i_{0}}
                  \otimes
                  C_{i,i_{0}}
              + \underbrace{
                  \sum_{j \in F^{\prime}}
                    \ElementaryMatrix{i_{0}}{j}
                    \otimes
                    C_{i_{0},j}
                }_{\eqqcolon W}
              + \ElementaryMatrix{i_{0}}{i_{0}}
                \otimes
                C_{i_{0},i_{0}}
              \\
            &=
              &\hat{L}^{\prime}\:D^{\prime}\:(\hat{L}^{\prime})^{\ast}
              + W^{\ast} + W
              + \ElementaryMatrix{i_{0}}{i_{0}}
                \otimes
                C_{i_{0},i_{0}}.
        \end{array}
        \end{equation}
        \end{restoremargins}

    \item
      By \Cref{prop:triangular:inverse:sig:article-graph-raj-dahya}
      $
        \supp((\hat{L}^{\prime})^{-1} - \onematrix)
        = \supp(\hat{L}^{\prime} - \onematrix)
        \subseteq F^{\prime}
      $.
      Since $i_{0} \notin F^{\prime}$,
      this entails
        $
          ((\hat{L}^{\prime})^{-1} - \onematrix)
          \:(\ElementaryMatrix{i_{0}}{i_{0}} \otimes \onematrix)
          = \zeromatrix
        $
      and thus
        $
          (\hat{L}^{\prime})^{-1}
          \:(\ElementaryMatrix{i_{0}}{i_{0}} \otimes \onematrix)
          = \ElementaryMatrix{i_{0}}{i_{0}} \otimes \onematrix
        $.
      So since
      $
        \ElementaryMatrix{i_{0}}{i_{0}} \otimes C_{i_{0},i_{0}}
        = (\ElementaryMatrix{i_{0}}{i_{0}} \otimes \onematrix)
          \:(\ElementaryMatrix{i_{0}}{i_{0}} \otimes C_{i_{0},i_{0}})
          \:(\ElementaryMatrix{i_{0}}{i_{0}} \otimes \onematrix)^{\ast}
      $,
      one has

        \begin{shorteqnarray}
          (\hat{L}^{\prime})^{-1}
          \:(\ElementaryMatrix{i_{0}}{i_{0}} \otimes C_{i_{0},i_{0}})
          \:(\hat{L}^{\prime})^{-\ast}
          = \ElementaryMatrix{i_{0}}{i_{0}} \otimes C_{i_{0},i_{0}},
        \end{shorteqnarray}

      \continueparagraph
      and since $W = (\ElementaryMatrix{i_{0}}{i_{0}} \otimes \onematrix)\:W$,
      one also obtains

        \begin{shorteqnarray}
          \widetilde{W}
            \coloneqq
              (\hat{L}^{\prime})^{-1}
              \:W
              \:(\hat{L}^{\prime})^{-\ast}
            &=
              &W\:(\hat{L}^{\prime})^{-\ast}
              \\
            &=
              &W
              +
              W\:\underbrace{
                ((\hat{L}^{\prime})^{-\ast} - \onematrix_{H_{1} \otimes H_{2}})
              }_{
                \supp(\cdot)
                \subseteq F^{\prime}
              }
              \\
            &=
              &W
              +
              W
              \sum_{j,k \in F^{\prime}}
                \ElementaryMatrix{k}{j}
                \otimes
                ((\hat{L}^{\prime})^{-\ast} - \onematrix_{H_{1}} \otimes \onematrix_{H_{2}})_{k,j}
              \\
            &=
              &W\:p_{F'}
              +
              W
              \:\Big(
                \sum_{j,k \in F^{\prime}}
                  \ElementaryMatrix{k}{j}
                  \otimes
                  ((\hat{L}^{\prime})^{-\ast})_{k,j}
                -
                p_{F'}
              \Big)
              \\
            &=
              &\Big(
                \sum_{k \in F^{\prime}}
                  \ElementaryMatrix{i_{0}}{k}
                  \otimes
                  C_{i_{0},k}
              \Big)
              \:\Big(
                \sum_{j,k \in F^{\prime}}
                  \ElementaryMatrix{k}{j}
                  \otimes
                  ((\hat{L}^{\prime})^{-1})^{\ast}_{j,k}
              \Big)
              \\
            &=
              &\sum_{j \in F^{\prime}}
                \ElementaryMatrix{i_{0}}{j}
                \otimes
                \sum_{k \in F^{\prime}}
                C_{i_{0},k}
                \:((\hat{L}^{\prime})^{-1})^{\ast}_{j,k}
        \end{shorteqnarray}

    \item
      Applying the above two expressions to \eqcref{eq:0:\beweislabel}
      yields
      $
        C
        = \hat{L}^{\prime}\:A\:(\hat{L}^{\prime})^{\ast}
      $,
      where

        \begin{shorteqnarray}
          A
            \coloneqq
              D^{\prime}
              + \widetilde{W}^{\ast}
              + \widetilde{W}
              + \ElementaryMatrix{i_{0}}{i_{0}}
                \otimes
                C_{i_{0},i_{0}},
        \end{shorteqnarray}

      \continueparagraph
      which is positive,
      since $A = (\hat{L}^{\prime})^{-1}\:C\:(\hat{L}^{\prime})^{-\ast}$.

    \item\label{it:majorisation:\beweislabel}
      Let $j \in F^{\prime}$.
      By the majorisation lemma (\Cref{lemm:majorisation:sig:article-graph-raj-dahya}),
      since $D_{j,j}$ is a finite rank operator,
      there exists $\hat{A}_{i_{0},j} \in \BoundedOps{H_{2}}$
      such that

        \begin{shorteqnarray}
          \widetilde{W}_{i_{0},j}
            = A_{i_{0},j}
            = \hat{A}_{i_{0},j}\:A_{j,j}
            = \hat{A}_{i_{0},j}\:D^{\prime}_{j,j}
        \end{shorteqnarray}

      \continueparagraph
      holds.
      In particular one may choose

      \begin{restoremargins}
      \begin{equation}
      \label{eq:cholesky-construction:1:sig:article-graph-raj-dahya}
        \hat{A}_{i_{0},j}
          \coloneqq
            \widetilde{W}_{i_{0},j}
            \:(D^{\prime}_{j,j})^{\dagger}
          = \sum_{k \in F^{\prime}}
              C_{i_{0},k}
              \:((\hat{L}^{\prime})^{-1})^{\ast}_{j,k}
              \:(D^{\prime}_{j,j})^{\dagger}
      \end{equation}
      \end{restoremargins}

      \continueparagraph
      for each $j \in F^{\prime}$.

    \item
      Since $D^{\prime}$ is diagonal
      the above computation yields

        \begin{restoremargins}
        \begin{equation}
        \label{eq:2:\beweislabel}
          \widetilde{W}
            =
              \sum_{j \in F^{\prime}}
                \ElementaryMatrix{i_{0}}{j} \otimes \widetilde{W}_{i_{0},j}
              \\
            =
              \Big(
                \underbrace{
                  \sum_{j \in F^{\prime}}
                    \ElementaryMatrix{i_{0}}{j} \otimes \hat{A}_{i_{0},j}
                }_{\eqqcolon \hat{W}}
                \Big)
              \:D^{\prime}
        \end{equation}
        \end{restoremargins}

    \end{enumerate}

    \continueparagraph
    We now claim that the decomposition in
    \eqcref{eq:cholesky:LDL-star:sig:article-graph-raj-dahya}
    holds with

    \begin{restoremargins}
    \begin{equation}
    \label{eq:cholesky-construction:2:sig:article-graph-raj-dahya}
    \everymath={\displaystyle}
    \begin{array}[m]{rcl}
      \hat{L}
        &\coloneqq
          &\hat{L}^{\prime} + \hat{W}
          = \hat{L}^{\prime}
            + \sum_{j \in F^{\prime}}
                \ElementaryMatrix{i_{0}}{j}
                \otimes
                \hat{A}_{i_{0},j}
          ~\text{and}
        \\
      D
        &\coloneqq
          &D^{\prime}
          +
          \ElementaryMatrix{i_{0}}{i_{0}}
          \otimes
          \Big(
            C_{i_{0},i_{0}}
            -
            \underbrace{
              \sum_{j \in F^{\prime}}
                \hat{A}_{i_{0},j}
                \:D^{\prime}_{j,j}
                \:\hat{A}_{i_{0},j}^{\ast}
            }_{
              \eqqcolon B
            }
          \Big).
    \end{array}
    \end{equation}
    \end{restoremargins}

    \continueparagraph
    To this end first observe that

    \begin{restoremargins}
    \begin{equation}
    \label{eq:3:\beweislabel}
    \everymath={\displaystyle}
    \begin{array}[m]{rcl}
      \hat{W}\:D^{\prime}\:\hat{W}^{\ast}
        &\eqcrefoverset{eq:2:\beweislabel}{=}
          &\sum_{j,k,l \in F^{\prime}}
            (\ElementaryMatrix{i_{0}}{j} \otimes \hat{A}_{i_{0},j})
            \:(\ElementaryMatrix{k}{k} \otimes D^{\prime}_{k,k})
            \:(\ElementaryMatrix{l}{i_{0}} \otimes \hat{A}_{i_{0},l}^{\ast})
          \\
        &=
          &\sum_{j \in F^{\prime}}
            \ElementaryMatrix{i_{0}}{i_{0}}
            \otimes
              \hat{A}_{i_{0},j}
              \:D^{\prime}_{j,j}
              \:\hat{A}_{i_{0},j}^{\ast}
          \\
        &=
          &\ElementaryMatrix{i_{0}}{i_{0}}
          \otimes
          B
    \end{array}
    \end{equation}
    \end{restoremargins}

    \continueparagraph
    and

    \begin{restoremargins}
    \begin{equation}
    \label{eq:4:\beweislabel}
      R
        \coloneqq
          \hat{W}\:(\ElementaryMatrix{i_{0}}{i_{0}} \otimes D_{i_{0},i_{0}})
        \eqcrefoverset{eq:2:\beweislabel}{=}
          \sum_{j \in F^{\prime}}
            (
              \ElementaryMatrix{i_{0}}{j}
              \otimes
              \hat{W}_{i_{0},j}
            )
            \:(
              \ElementaryMatrix{i_{0}}{i_{0}}
              \otimes
              D_{i_{0},i_{0}}
            )
        = \zeromatrix,
    \end{equation}
    \end{restoremargins}

    \continueparagraph
    since $i_{0} \notin F^{\prime}$.
    By
      \eqcref{eq:2:\beweislabel},
      \eqcref{eq:3:\beweislabel},
      and
      \eqcref{eq:4:\beweislabel}
    one obtains

    \begin{shorteqnarray}
      (\onematrix + \hat{W})
        \:D
        \:(\onematrix + \hat{W})^{\ast}
        &=
          &(\onematrix + \hat{W})
          \:\Big(
            D^{\prime} + \ElementaryMatrix{i_{0}}{i_{0}} \otimes (C_{i_{0},i_{0}} - B)
          \Big)
          \:(\onematrix + \hat{W})^{\ast}
        \\
        &=
          &D^{\prime} + \ElementaryMatrix{i_{0}}{i_{0}} \otimes (C_{i_{0},i_{0}} - B)
          \\
          &&+ \hat{W}\:D^{\prime} + \cancel{R}
          \\
          &&+ (\hat{W}\:D^{\prime})^{\ast} + \cancel{R}^{\ast}
          \\
          &&+ \hat{W}\:D^{\prime}\:\hat{W}^{\ast} + \cancel{R}\:\hat{W}^{\ast}
        \\
        &=
          &D^{\prime} + \ElementaryMatrix{i_{0}}{i_{0}} \otimes (C_{i_{0},i_{0}} - B)
          + \widetilde{W}
          + \widetilde{W}^{\ast}
          + \ElementaryMatrix{i_{0}}{i_{0}} \otimes B
        \\
        &= &A.
    \end{shorteqnarray}

    \continueparagraph
    And since
      $\supp(\hat{L}^{\prime} - \onematrix) \subseteq F^{\prime} \notni i_{0}$,
    one has
    $(\hat{L}^{\prime} - \onematrix)\:\hat{W} = \zeromatrix$
    and thus
    $
      \hat{L}
      = \hat{L}^{\prime} \: (\onematrix + \hat{W})
    $.
    So

    \begin{shorteqnarray}
      \hat{L}\:D\:(\hat{L})^{\ast}
        &=
          &\hat{L}^{\prime}
          \:(\onematrix + \hat{W})
          \:D
          \:(\onematrix + \hat{W})^{\ast}
          \:(\hat{L}^{\prime})^{\ast}
        \\
        &=
          &\hat{L}^{\prime}
          \:A
          \:(\hat{L}^{\prime})^{\ast}
        = C.
    \end{shorteqnarray}

    Thus \eqcref{eq:cholesky:LDL-star:sig:article-graph-raj-dahya} holds.
    Note by construction that $\hat{L}$ is lower uni\=/triangular
    and $D$ is diagonal
    with $
      \supp(D)
      \subseteq
        \supp(D^{\prime}) \cup \{i_{0}\}
      \subseteq F^{\prime} \cup \{i_{0}\}
      = F
    $.
    Since $D = (\hat{L})^{-1}\:C\:(\hat{L})^{-\ast}$
    and $C$ is positive,
    one has that $D$ is positive.
    Moreover, by induction
    for each $i \in I \without \{i_{0}\}$ one has
    $
      D_{i,i}
      = D^{\prime}_{i,i}
      \in \mathcal{K}
    $
    and for $i=i_{0}$ one has
    $
      D_{i,i}
      = C_{i_{0},i_{0}}
        -
        \sum_{j \in F^{\prime}}
          \hat{A}_{i_{0},j}
          D^{\prime}_{j,j}
          \hat{A}_{i_{0},j}^{\ast}
      \in \mathcal{K}
    $,
    since $C_{i_{0},i_{0}}$ and each $D^{\prime}_{j,j}$
    are in the ideal $\mathcal{K}$.
    Finally, since $\hat{L} = \hat{L}^{\prime} + \hat{W}$,
    by construction of $\hat{W}$
    one can verify that
      $
        \supp(\hat{L} - \onematrix)
        \subseteq
          F^{\prime} \cup \{i_{0}\}
        = F
      $.
    We have thus achieved a bi-partite Cholesky decomposition for $C$
    satisfying the desired property.
  \end{proof}



\begin{lemm}[Uniqueness of bi-partite Cholesky\=/decomposition]
\makelabel{lemm:cholesky:uniqueness:sig:article-graph-raj-dahya}
  Suppose that

    \begin{restoremargins}
    \begin{equation}
    \label{eq:cholesky-2:\beweislabel}
      C
      \coloneqq \hat{L}\:D\:\hat{L}^{\ast}
      = \hat{L}^{\prime}\:D^{\prime}\:(\hat{L}^{\prime})^{\ast}
    \end{equation}
    \end{restoremargins}

  \continueparagraph
  where
    $D, D^{\prime} \in \BoundedOps{H_{1} \otimes H_{2}}$
    are diagonal and positive
    and
    $\hat{L},\hat{L}^{\prime} \in \BoundedOps{H_{1} \otimes H_{2}}$
    are lower uni\=/triangular
    such that
    $D$,
    $D^{\prime}$,
    $\hat{L} - \onematrix$,
    and
    $\hat{L}^{\prime} - \onematrix$
    have finite support.
  Then
    $L = L^{\prime}$,
  where
    $L \coloneqq \hat{L}\:\sqrt{D}$
    and
    $L^{\prime} \coloneqq \hat{L}^{\prime}\:\sqrt{D}^{\prime}$.
\end{lemm}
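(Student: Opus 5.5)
The plan is to compare operator-valued entries and show, by induction along the linear order on $I$, that every entry of $L$ agrees with the corresponding entry of $L^{\prime}$. Fix a finite set $F \subseteq I$ containing $\supp(D)$, $\supp(D^{\prime})$, $\supp(\hat{L}-\onematrix)$ and $\supp(\hat{L}^{\prime}-\onematrix)$.

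First I would record the entries of $L$. By \eqcref{eq:finite-support-LD:sig:article-graph-raj-dahya}, $\supp(L),\supp(L^{\prime}) \subseteq F$ and
\begin{equation*}
L_{i,j} = \hat{L}_{i,j}\sqrt{D_{j,j}} \text{ for } j \leq i, \qquad L_{i,j} = \zeromatrix \text{ otherwise},
\end{equation*}
so in particular $L_{i,i} = \sqrt{D_{i,i}}$. Since $\sqrt{D}$ is self-adjoint, $C = \hat{L}\,D\,\hat{L}^{\ast} = L\,L^{\ast}$, and likewise $C = L^{\prime}(L^{\prime})^{\ast}$. The matrix calculus \eqcref{eq:bipartite:operations:sig:article-graph-raj-dahya} for finitely supported operators then gives, for $j \leq i$ in $F$,
\begin{equation*}
C_{i,j} = \sum_{k \in F\colon k<j} L_{i,k}\,L_{j,k}^{\ast} + L_{i,j}\,\sqrt{D_{j,j}},
\end{equation*}
and the same identity with primes. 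As a special case, $C_{i,i} = \sum_{k<i} L_{i,k}L_{i,k}^{\ast} + D_{i,i}$. Since $L$ and $L^{\prime}$ have support in $F$, it suffices by \eqcref{eq:bipartite:sig:article-graph-raj-dahya} to prove $L_{i,j} = L^{\prime}_{i,j}$ for all $j \leq i$ in $F$.

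The induction runs over pairs $(i,j)$ ordered row by row: first on $i$, and within a row on $j$, with the diagonal $j=i$ treated last. Assume all entries in rows $<i$ agree, and that $L_{i,k} = L^{\prime}_{i,k}$ for all $k<j$.

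\emph{Off-diagonal case $j<i$.} Subtracting the two expansions of $C_{i,j}$ cancels the sum over $k<j$, by the inductive hypothesis. The diagonal entry $(j,j)$ lies in an earlier row, so $\sqrt{D_{j,j}} = L_{j,j} = L^{\prime}_{j,j} = \sqrt{D^{\prime}_{j,j}}$. Hence
\begin{equation*}
(L_{i,j} - L^{\prime}_{i,j})\sqrt{D_{j,j}} = \zeromatrix .
\end{equation*}
I expect this cancellation of $\sqrt{D_{j,j}}$ from the right to be the main obstacle, because $D_{j,j}$ need not be invertible and $\hat{L}_{i,j}$ is genuinely non-unique on $\ker D_{j,j}$. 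To handle it, set $X \coloneqq \hat{L}_{i,j} - \hat{L}^{\prime}_{i,j}$, so that $L_{i,j} - L^{\prime}_{i,j} = X\sqrt{D_{j,j}}$ and the display reads $X\sqrt{D_{j,j}}\sqrt{D_{j,j}} = \zeromatrix$. Thus $X\sqrt{D_{j,j}}$ vanishes on $\ran\sqrt{D_{j,j}}$, hence by continuity on $\quer{\ran}\sqrt{D_{j,j}} = (\ker\sqrt{D_{j,j}})^{\perp}$. It also vanishes trivially on $\ker\sqrt{D_{j,j}}$. Therefore $X\sqrt{D_{j,j}} = \zeromatrix$, i.e.\ $L_{i,j} = L^{\prime}_{i,j}$.

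\emph{Diagonal case $j=i$.} By the inductive hypothesis the sums over $k<i$ in the expansions of $C_{i,i}$ coincide, so $D_{i,i} = D^{\prime}_{i,i}$. Uniqueness of positive square roots then gives $L_{i,i} = L^{\prime}_{i,i}$.

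The base case is the minimal element $i_{1}$ of $F$: there $C_{i_{1},i_{1}} = D_{i_{1},i_{1}} = D^{\prime}_{i_{1},i_{1}}$, which is the diagonal case with an empty sum. Since $F$ is finite, the induction terminates and yields $L = L^{\prime}$.
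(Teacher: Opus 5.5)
Your proof is correct, but it takes a different route from the paper's.

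The paper first conjugates by $\hat{L}^{-1}$, using that finitely supported lower uni-triangular operators are invertible with controlled support. It then works with $L'' \coloneqq \hat{L}^{-1}\hat{L}'\sqrt{D'}$, which is lower triangular, has diagonal entries $L''_{i,i} = \sqrt{D'_{i,i}}$ known in advance, and satisfies $L''(L'')^{\ast} = D$. Because the right-hand side is diagonal, a single minimality argument within one row suffices. If $j_{0} < i_{0}$ is the first column with $L''_{i_{0},j_{0}} \neq \zeromatrix$, then $\zeromatrix = D_{i_{0},j_{0}} = \hat{L}''_{i_{0},j_{0}}D'_{j_{0},j_{0}}$. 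Hence $L''_{i_{0},j_{0}}(L''_{i_{0},j_{0}})^{\ast} = \zeromatrix$, a contradiction. So $L''$ is diagonal and is forced to equal $\sqrt{D}$, which gives $\hat{L}\sqrt{D} = \hat{L}'\sqrt{D'}$.

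You instead compare $LL^{\ast} = L'(L')^{\ast}$ directly, entry by entry in lexicographic order. Since $D_{j,j}$ and $D'_{j,j}$ are not known to agree a priori, you must interleave diagonal and off-diagonal steps in a double induction. In exchange, you avoid inverting $\hat{L}$ and the auxiliary support results for products and inverses.

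The key cancellation is the same in both arguments: passing from $XD_{j,j} = \zeromatrix$ to $X\sqrt{D_{j,j}} = \zeromatrix$. The paper does this via $(X\sqrt{D_{j,j}})(X\sqrt{D_{j,j}})^{\ast} = XD_{j,j}X^{\ast}$; you do it via $H_{2} = \ker\sqrt{D_{j,j}} \oplus \overline{\ran}\sqrt{D_{j,j}}$.

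Your version has one extra benefit. It exhibits exactly the recursion $C_{i,j} = \sum_{k<j}L_{i,k}L_{j,k}^{\ast} + L_{i,j}\sqrt{D_{j,j}}$ that the existence construction and the appendix algorithm solve. So it shows directly that $C$ determines $L$ entry by entry.
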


    \begin{proof}
      By assumption,
        $
          \supp(D),
          \supp(D^{\prime}),
          \supp(\hat{L} - \onematrix),
          \supp(\hat{L}^{\prime} - \onematrix) \subseteq F
        $
      for some finite set $F \subseteq I$.
      By \Cref{prop:triangular:inverse:sig:article-graph-raj-dahya},
      $\hat{L}^{-1}$ is lower uni\=/triangular
      with
      $
        \supp(\hat{L}^{-1} - \onematrix)
        = \supp(\hat{L} - \onematrix)
        \subseteq F
      $.
      By \Cref{prop:triangular:support-product:sig:article-graph-raj-dahya}
      and since lower uni\=/triangular operators
      are closed under products,
      one has that
        $
          \hat{L}^{\prime\prime}
          \coloneqq
          \hat{L}^{-1}\:\hat{L}^{\prime}
        $
      is a lower uni\=/triangular operator
      with
      $
        \supp(\hat{L}^{\prime\prime} - \onematrix)
        \subseteq F
      $.
      Finally define
      the lower triangular operator

        \begin{shorteqnarray}
          L^{\prime\prime}
            \coloneqq
              &\hat{L}^{\prime\prime}\:\sqrt{D^{\prime}}
            =
              \sqrt{D^{\prime}}
              + \underbrace{
                (\hat{L}^{\prime\prime} - \onematrix)
              }_{\supp(\cdot) \subseteq F}
              \:\sqrt{D^{\prime}}
        \end{shorteqnarray}

      \continueparagraph
      which has support $\supp(L^{\prime\prime}) \subseteq F$
      by \Cref{prop:triangular:inverse:sig:article-graph-raj-dahya}.
      Observe that

        \begin{restoremargins}
        \begin{equation}
        \label{eq:1:\beweislabel}
        \everymath={\displaystyle}
        \begin{array}[m]{rcl}
          L^{\prime\prime}\:(L^{\prime\prime})^{\ast}
            &= &\hat{L}^{-1}
              \:\hat{L}^{\prime}
              \:\sqrt{D^{\prime}}
              \:\sqrt{D^{\prime}}
              \:(\hat{L}^{\prime})^{\ast}
              \:(\hat{L}^{-1})^{\ast}
            \\
            &= &\hat{L}^{-1}
              \:\Big(
              \hat{L}^{\prime}
              \:D^{\prime}
              \:(\hat{L}^{\prime})^{\ast}
              \Big)
              \:(\hat{L}^{-1})^{\ast}
            \\
            &\eqcrefoverset{eq:cholesky-2:\beweislabel}{=}
              &\hat{L}^{-1}\:(\hat{L}\:D\:\hat{L}^{\ast})\:(\hat{L}^{-1})^{\ast}
              = D.
        \end{array}
        \end{equation}
        \end{restoremargins}

      We now claim that $L^{\prime\prime}$ is diagonal.
      If this were not the case,
      then $L^{\prime\prime}_{i_{0},j_{0}} \neq \zeromatrix$.
      for some $i_{0},j_{0} \in I$
      with $j_{0} < i_{0}$.
      Since
        $L^{\prime\prime}_{i,j} = \zeromatrix$
      for $i,j \in I \without F$ with $j < i$ (see above),
      we necessarily have
        $i_{0},j_{0} \in F$.
      We can thus choose $j_{0}$ to be the minimal index $j \in F$
      for which $L^{\prime\prime}_{i_{0},j} \neq \zeromatrix$.
      Since $L^{\prime\prime}$ has finite support,
      the above expression yields

        \begin{shorteqnarray}
          \zeromatrix
            = D_{i_{0},j_{0}}
            &\eqcrefoverset{eq:1:\beweislabel}{=}
              &(L^{\prime\prime}\:(L^{\prime\prime})^{\ast})_{i_{0},j_{0}}
              \\
            &= &\sum_{k \in F}
                L^{\prime\prime}_{i_{0},k}
                \:(L^{\prime\prime}_{j_{0},k})^{\ast}
              \\
            &= &\sum_{k \in F\colon k \leq i_{0}, j_{0}}
                L^{\prime\prime}_{i_{0},k}
                \:(L^{\prime\prime}_{j_{0},k})^{\ast}
              \\
            &&\text{
              since $L^{\prime\prime}$ is lower triangular
            }\\
            &= &\sum_{k \in F\colon k \leq j_{0}}
                L^{\prime\prime}_{i_{0},k}
                \:(L^{\prime\prime}_{j_{0},k})^{\ast}
              \\
            &\overset{(\ast)}{=}
              &L^{\prime\prime}_{i_{0},j_{0}}
                \:(L^{\prime\prime}_{j_{0},j_{0}})^{\ast}
              \\
            &=
              &(
                \hat{L}^{\prime\prime}
                \:\sqrt{D^{\prime}}
              )_{i_{0},j_{0}}
              \:(
                \hat{L}^{\prime\prime}
                \:\sqrt{D^{\prime}}
              )_{j_{0},j_{0}}^{\ast}
              \\
            &=
              &\hat{L}^{\prime\prime}_{i_{0},j_{0}}
              \:\sqrt{D^{\prime}_{j_{0},j_{0}}}
              \:\sqrt{D^{\prime}_{j_{0},j_{0}}}
              \:(\hat{L}^{\prime\prime}_{j_{0},j_{0}})^{\ast}
              \\
            &=
              &\hat{L}^{\prime\prime}_{i_{0},j_{0}}
              \:D^{\prime}_{j_{0},j_{0}}
              \:\onematrix,
        \end{shorteqnarray}

      \continueparagraph
      whereby ($\ast$) holds
      since $L^{\prime\prime}_{i_{0},k} = \zeromatrix$
      for $k < j_{0}$ by minimality of $j_{0}$.
      The final expression implies that
      $
        \zeromatrix
        = \hat{L}^{\prime\prime}_{i_{0},j_{0}}
        \:D^{\prime}_{j_{0},j_{0}}
        \:(\hat{L}^{\prime\prime}_{i_{0},j_{0}})^{\ast}
        = L^{\prime\prime}_{i_{0},j_{0}}\:(L^{\prime\prime}_{i_{0},j_{0}})^{\ast}
      $,
      which in turn implies that
      $L^{\prime\prime}_{i_{0},j_{0}} = \zeromatrix$,
      a contradiction.

      Now since $L^{\prime\prime}$ is diagonal,
      by \eqcref{eq:1:\beweislabel} one has
      $
        L^{\prime\prime}_{i,i}\:(L^{\prime\prime}_{i,i})^{\ast}
        = (L^{\prime\prime}\:(L^{\prime\prime})^{\ast})_{i,i}
        = D_{i,i}
      $
      for each $i \in I$.
      Note that each $L^{\prime\prime}_{i,i} \geq \zeromatrix$,
      since
      $
        L^{\prime\prime}_{i,i}
        = (\hat{L}^{\prime\prime}\:\sqrt{D^{\prime}})_{i,i}
        = \hat{L}^{\prime\prime}_{i,i}\:\sqrt{D^{\prime}_{i,i}}
        = \onematrix\:\sqrt{D^{\prime}_{i,i}}
      $.
      Thus
      $
        L^{\prime\prime}_{i,i}
        = \sqrt{D_{i,i}}
      $
      for each $i \in I$,
      whence
      $
        L^{\prime\prime}
        = \sum_{i \in F}
            \ElementaryMatrix{i}{i}
            \otimes
            L^{\prime\prime}_{i,i}
        = \sum_{i \in F}
            \ElementaryMatrix{i}{i}
            \otimes
            \sqrt{D_{i,i}}
        = \sqrt{D}
      $,
      since $L^{\prime\prime}$ is diagonal with finite support.
      Hence
      $
        D
        = L^{\prime\prime}
        = \hat{L}^{\prime\prime}\:\sqrt{D^{\prime}}
        = \hat{L}^{-1}\:\hat{L}^{\prime}\:\sqrt{D^{\prime}}
      $,
      from which
      $
        \hat{L}\:\sqrt{D}
        = \hat{L}^{\prime}\:\sqrt{D^{\prime}}
      $
      follows.
    \end{proof}

By \Cref{lemm:cholesky:uniqueness:sig:article-graph-raj-dahya}
we may speak of \emph{the} bi-partite Cholesky decomposition,
which by \Cref{lemm:cholesky:existence:sig:article-graph-raj-dahya}
always exists for finitely supported finite rank positive operators.




\subsection[Resolution of CP-/maps]{Resolution of CP\=/maps}
\label{sec:separation:sig:article-graph-raj-dahya}

\firstparagraph
Via the Choi\==Jamio{\l}kowski correspondence
and bi-partite Cholesky decomposition,
we obtain our primary means to analyse CP(TP)\=/maps.


\begin{highlightbox}
\begin{lemm}[Resolution of CP\=/maps]
\makelabel{lemm:resolution:sig:article-graph-raj-dahya}
  Suppose that $H_{1}$ is separable
  with ONB $\{\BaseVector{i}\}_{i \in I}$,
  where $I = \naturals$
  or $\{1,2,\ldots,N\}$ for some $N \in \naturals$.
  Let ${\Phi : L^{1}(H_{1}) \to L^{1}(H_{2})}$
  be a CPCB FP\=/map.
  Then a family
  $
    \{\zeta^{(\Phi)}_{i}\}_{i \in I}
    \subseteq
    L^{2}(H_{1} \otimes H_{2}, H_{2})
  $
  exists such that

  \begin{restoremargins}
  \begin{equation}
  \label{eq:separation:sig:article-graph-raj-dahya}
    \Phi(\ElementaryMatrix{i}{j})
    = \zeta^{(\Phi)}_{i}\:(\zeta^{(\Phi)}_{j})^{\ast}
  \end{equation}
  \end{restoremargins}

  \continueparagraph
  for $i,j \in I$.
  If $\Phi$ is moreover CPTP, then
  $\{\zeta^{(\Phi)}_{i}\}_{i \in I}$
  constitutes an orthonormal family
  \wrt the Hilbert\==Schmidt structure.
\end{lemm}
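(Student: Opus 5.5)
The plan is to build the family $\{\zeta^{(\Phi)}_{i}\}_{i\in I}$ row by row from the bi-partite Cholesky decompositions of the Choi matrices on initial segments of $I$. For $n \in I$ let $F_{n} \coloneqq \{1,\ldots,n\}$ and $C_{i,j} \coloneqq \Phi(\ElementaryMatrix{i}{j}) \in L^{1}(H_{2})$.

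By \Cref{lemm:choi-jamiolkowski:sig:article-graph-raj-dahya}, each $C^{(F_{n})} = \Choi{\Phi}^{(F_{n})}$ is positive, and $\supp C^{(F_{n})} \subseteq F_{n}$. Since $\Phi$ is FP and each $\ElementaryMatrix{i}{j}$ has rank one, every entry $C_{i,j}$ lies in $\mathcal{K} = \FiniteRankOps{H_{2}}$. Hence \Cref{lemm:cholesky:existence:sig:article-graph-raj-dahya} yields a decomposition
\[
  C^{(F_{n})} = L^{(n)}\,(L^{(n)})^{\ast}, \qquad L^{(n)} = \hat{L}^{(n)}\sqrt{D^{(n)}},
\]
with finite supports inside $F_{n}$ and finite rank diagonal entries. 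By \Cref{lemm:cholesky:uniqueness:sig:article-graph-raj-dahya}, $L^{(n)}$ is uniquely determined.

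The step I expect to be the main obstacle is \emph{coherence}: the entries $L^{(n)}_{i,k}$ must not depend on $n$ once $i,k \leq n$. For $m < n$, compress $L^{(n)}$ to $F_{m}$. Concretely, take $p_{F_{m}}\hat{L}^{(n)}p_{F_{m}} + (\onematrix - p_{F_{m}})$ as the uni-triangular factor and the compression of $D^{(n)}$ as the diagonal factor. Because $L^{(n)}$ is lower triangular, for $i,j \in F_{m}$ one has
\[
  (L^{(n)}(L^{(n)})^{\ast})_{i,j} = \sum_{k \leq \min(i,j)} L^{(n)}_{i,k}\,(L^{(n)}_{j,k})^{\ast},
\]
and only indices $k \in F_{m}$ occur. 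So the compressed operators form a bi-partite Cholesky decomposition of $C^{(F_{m})}$ with finite supports. Uniqueness then gives $L^{(m)}_{i,k} = L^{(n)}_{i,k}$ for $k \leq i \leq m$. We may therefore write $L_{i,k}$ for the common value, for all $k \leq i$ in $I$. Each $L_{i,k}$ is finite rank, since $\hat{L}_{i,k}$ is bounded and $D_{k,k}$ is finite rank.

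Next define $\zeta_{i} \coloneqq \sum_{k \leq i} \bra{\BaseVector{k}} \otimes L_{i,k}$. This is a finite sum of finite rank operators $H_{1}\otimes H_{2} \to H_{2}$, so $\zeta_{i}$ is Hilbert--Schmidt and $\zeta_{i}(\BaseVector{k}\otimes y) = L_{i,k}\,y$. Using $\ket{\BaseVector{k}}\otimes\onematrix$ for the adjoint, we get
\[
  \zeta_{i}\,\zeta_{j}^{\ast} = \sum_{k \leq \min(i,j)} L_{i,k}\,L_{j,k}^{\ast} = (L^{(n)}(L^{(n)})^{\ast})_{i,j} = C_{i,j}
\]
for any $n \geq i,j$. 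This is exactly \eqcref{eq:separation:sig:article-graph-raj-dahya}.

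In the CPTP case, \Cref{lemm:choi-jamiolkowski:sig:article-graph-raj-dahya} gives $\tr(C_{i,j}) = \delta_{i,j}$. Hence
\[
  \braket{\zeta_{j}}{\zeta_{i}}_{L^{2}} = \tr(\zeta_{j}^{\ast}\zeta_{i}) = \tr(\zeta_{i}\zeta_{j}^{\ast}) = \tr(C_{i,j}) = \delta_{i,j},
\]
where cyclicity of the trace is justified because all factors are finite rank. Therefore the family is orthonormal.
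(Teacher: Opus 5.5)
Your proof is correct. Its overall architecture matches the paper's: Choi matrices on the initial segments $F_n$, bi-partite Cholesky decompositions, $\zeta_i$ taken as the $i$-th row of $L$, and the trace computation for orthonormality. The difference lies in the coherence step you singled out.

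The paper does not use \Cref{lemm:cholesky:uniqueness:sig:article-graph-raj-dahya} there. Instead it reads the proof of \Cref{lemm:cholesky:existence:sig:article-graph-raj-dahya} as a recursion. The decomposition of $C^{(n)}$ is obtained by extending the already fixed decomposition of $p_{F_{n-1}}\,C^{(n)}\,p_{F_{n-1}} = C^{(n-1)}$ via \eqcref{eq:cholesky-construction:2:sig:article-graph-raj-dahya}. This gives $\hat{L}^{(n)} = \hat{L}^{(n-1)} + \sum_{j}\ElementaryMatrix{n}{j}\otimes\hat{L}^{(n)}_{n,j}$ and $D^{(n)} = D^{(n-1)} + \ElementaryMatrix{n}{n}\otimes D^{(n)}_{n,n}$, and an induction then produces global families $\{\hat{L}_{i,j}\}$ and $\{D_{i,i}\}$.

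You instead allow arbitrary decompositions at each level and force agreement through uniqueness. To do so you verify that compressing a decomposition of $C^{(F_n)}$ to $F_m$ yields a decomposition of $C^{(F_m)}$. That verification rests on $p_{F_m}\,\hat{L}^{(n)}\,(\onematrix - p_{F_m}) = \zeromatrix$, which uses lower triangularity together with $F_m$ being an initial segment. It also uses that $p_{F_m}$ commutes with diagonal operators.

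Your route is more structural. The resolution no longer depends on how the decompositions were produced, and you see directly that row $i$ of $L$ is determined by $C^{(F_i)}$ alone. That is, the construction is canonical given the ordered ONB.

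The paper's route dispenses with the uniqueness lemma. It ties $\zeta^{(\Phi)}_{n}$ directly to the explicit recursion of the appendix algorithm, which is what is later exploited for the constructibility and measurability statements in \Cref{thm:choi-cholesky-rep-computability:sig:article-graph-raj-dahya}. With your argument you would still run that recursion to obtain explicit formulas, and uniqueness then guarantees they agree with your $L_{i,k}$.
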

\end{highlightbox}

  \begin{proof}
    By the Choi\==Jamio{\l}kowski correspondence
    (see \Cref{lemm:choi-jamiolkowski:sig:article-graph-raj-dahya}),
    letting
    ${C_{i,j} \coloneqq \Phi(\ElementaryMatrix{i}{j})}$
    for $i,j \in I$,
    we have that
    $
      C^{(F)}
      \coloneqq
        \sum_{i,j \in F}
          \ElementaryMatrix{i}{j}
          \otimes
          C_{i,j}
    $
    is positive
    for finite $F \subseteq I$.
    And since $\Phi$ is an FP\=/map,
    each $C_{i,j}$ has finite rank.

    We now introduce some notation:
    For $n \in \{0\} \cup I$
    let
      $F_{n} \coloneqq \emptyset$
      if $n = 0$
      $F_{n} \coloneqq \{1,2,\ldots,n\}$
      otherwise,
    and set
      $C^{(n)} \coloneqq C^{(F_{n})}$.
    Observe that
      $\supp C^{(n)} \subseteq F_{n}$
    and that
    each $C^{(n)}_{i,j} \in \{C_{i,j},\zeromatrix\} \subseteq \FiniteRankOps{H_{2}}$.
    Since the assumptions of \Cref{lemm:cholesky:existence:sig:article-graph-raj-dahya}
    are fulfilled,
    there exists a bi-partite Cholesky decomposition
    $C^{(n)} = \hat{L}^{(n)}\:D^{(n)}\:(\hat{L}^{(n)})^{\ast}$
    satisfying
    $\supp(D^{(n)}),\supp(\hat{L}^{(n)} - \onematrix) \subseteq F_{n}$
    and
    $D^{(n)}_{i,i} \in \FiniteRankOps{H_{2}}$
    for each $i \in I$.

    Let $n \in I$ and note that
    $
      F_{n}^{\prime}
      \coloneqq
        F_{n} \without \{\max F_{n}\}
      = F_{n} \without \{n\}
      = F_{n-1}
    $.
    Observe that the construction in \Cref{lemm:cholesky:existence:sig:article-graph-raj-dahya}
    yields that the decomposition for $C^{(n)}$
    is derived from the decomposition of
    $
      p_{F_{n}^{\prime}}\:C^{(n)}\:p_{F_{n}^{\prime}}
      = p_{F_{n-1}}\:C^{(n)}\:p_{F_{n-1}}
      = C^{(n-1)}
    $.
    In particular, by \eqcref{eq:cholesky-construction:2:sig:article-graph-raj-dahya}

      \begin{shorteqnarray}
        \hat{L}^{(n)}
          &=
            &\hat{L}^{(n-1)}
            + \sum_{j=1}^{n}
                \ElementaryMatrix{n}{j}
                \otimes
                \hat{L}^{(n)}_{n,j}
            ~\text{and}
            \\
        D^{(n)}
          &=
            &D^{(n-1)}
            + \ElementaryMatrix{n}{n}
              \otimes
              D^{(n)}_{n,n}
      \end{shorteqnarray}

    \continueparagraph
    for some operators
    $\{\hat{L}^{(n)}_{n,j}\}_{j=1}^{n} \subseteq \BoundedOps{H_{2}}$
    and
    $D^{(n)}_{n,n} \in \FiniteRankOps{H_{2}}$.
    Noting that for the base case
    $\hat{L}^{(0)} = \onematrix_{H_{1} \otimes H_{2}}$
    and
    $D^{(0)} = \zeromatrix$
    (see the start of the proof of \Cref{lemm:cholesky:existence:sig:article-graph-raj-dahya}),
    by a simple induction argument we thus obtain families
    $\{\hat{L}_{i,j}\}_{i,j \in I}$
    and
    $\{D_{i,i}\}_{i \in I} \subseteq \FiniteRankOps{H_{2}}$
    satisfying

      \begin{shorteqnarray}
        \hat{L}^{(n)}
          &=
            &\onematrix_{H_{1} \otimes H_{2}}
            + \sum_{i=1}^{n}
              \sum_{j=1}^{i}
                \ElementaryMatrix{i}{j}
                \otimes
                \hat{L}_{i,j}
            ~\text{and}
            \\
        D^{(n)}
          &=
            &\sum_{i=1}^{n}
              \ElementaryMatrix{i}{i}
              \otimes
              D_{i,i}
      \end{shorteqnarray}

    \continueparagraph
    for each $n \in I$,
    whereby we define $\hat{L}_{i,j} \coloneqq \zeromatrix$
    for $i,j \in I$ with $j > i$.

    Let $n \in I$ be arbitrary.
    By defining
      $
        L_{i,j}
        \coloneqq
          \hat{L}_{i,j}\:\sqrt{D_{j,j}}
        \in L^{2}(H_{2})
      $
    for $i, j \in I$,
    one can express
    the lower triangular operator
    $
      L^{(n)}
      \coloneqq
        \hat{L}^{(n)}
        \sqrt{D^{(n)}}
    $
    as

      \begin{shorteqnarray}
        L^{(n)}
          &\eqcrefoverset{eq:finite-support-LD:sig:article-graph-raj-dahya}{=}
            &\sum_{i=1}^{n}
            \sum_{j=1}^{i}
              \ElementaryMatrix{i}{j}
              \otimes
              \hat{L}_{i,j}
              \:\sqrt{D_{j,j}}
            \\
          &=
            &\sum_{i=1}^{n}
            \sum_{j=1}^{i}
              \ketbra{\BaseVector{i}}{\BaseVector{j}}
              \otimes
              L_{i,j}
            \\
          &=
            &\sum_{i=1}^{n}
              (\ket{\BaseVector{i}} \otimes \onematrix)
              \:\underbrace{
                \Big(
                  \sum_{j=1}^{i}
                    \bra{\BaseVector{j}}
                    \otimes
                    L_{i,j}
                \Big)
              }_{\eqqcolon \zeta^{(\Phi)}_{i}},
      \end{shorteqnarray}

    \continueparagraph
    whereby each $
      \zeta^{(\Phi)}_{i}
      \in \FiniteRankOps{H_{1} \otimes H_{2}, H_{2}}
      \subseteq L^{2}(H_{1} \otimes H_{2}, H_{2})
    $,
    by virtue of each $D_{j,j}$
    being a finite rank operator.

    So for arbitrary $i,j \in I$,
    choosing $n \coloneqq \max\{i,j\} \in I$
    one obtains

      \begin{shorteqnarray}
        C_{i,j}
          =
            C^{(n)}_{i,j}
          &=
            &(\bra{\BaseVector{i}} \otimes \onematrix)
            \:C^{(n)}
            \:(\ket{\BaseVector{j}} \otimes \onematrix)
            \\
          &=
            &(\bra{\BaseVector{i}} \otimes \onematrix)
            \:L^{(n)}
            \:(L^{(n)})^{\ast}
            \:(\ket{\BaseVector{j}} \otimes \onematrix)
            \\
          &=
            &((\bra{\BaseVector{i}} \otimes \onematrix)\:L^{(n)})
            \:((\bra{\BaseVector{j}} \otimes \onematrix)\:L^{(n)})^{\ast}
            \\
          &=
            &\zeta^{(\Phi)}_{i}\:(\zeta^{(\Phi)}_{j})^{\ast},
      \end{shorteqnarray}

    \continueparagraph
    which proves \eqcref{eq:separation:sig:article-graph-raj-dahya}.

    Finally, if $\Phi$ is trace-preserving,
    then the final claim immediately follows from this,
    since under the Hilbert\==Schmidt structure
    $
      \braket{\zeta^{(\Phi)}_{j}}{\zeta^{(\Phi)}_{i}}_{\tr}
      = \tr(\zeta^{(\Phi)}_{i}\:(\zeta^{(\Phi)}_{j})^{\ast})
      = \tr(C_{i,j})
      = \delta_{i,j}
    $
    for $i,j \in I$.
  \end{proof}

\begin{cor}
\makelabel{cor:lemm:resolution:sig:article-graph-raj-dahya}
  Under the assumptions of \Cref{lemm:resolution:sig:article-graph-raj-dahya},
  letting $\HilbertRaum \coloneqq L^{2}(H_{1} \otimes H_{2}, H_{2})$,
  there exists a unique bounded operator
  $V_{\Phi} \in \BoundedOps{H_{1}}{\HilbertRaum}$
  such that
  $V_{\Phi}\BaseVector{i} = \zeta^{(\Phi)}_{i}$
  for all $i \in I$.
  Moreover,
  $V_{\Phi}$ is a contraction (\resp an isometry)
  if $\Phi$ is completely contractive
  (\resp trace-preserving).
\end{cor}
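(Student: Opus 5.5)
Define $V_{\Phi}$ first on the dense subspace $\mathcal{D} \coloneqq \linspann\{\BaseVector{i} \mid i \in I\}$ by linear extension of $\BaseVector{i} \mapsto \zeta^{(\Phi)}_{i}$. Uniqueness is then automatic, since a bounded operator is determined by its values on an ONB. Existence reduces to showing that this linear map on $\mathcal{D}$ is bounded, after which it extends uniquely by continuity to all of $H_{1}$.

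The key computation is a Gram-matrix identity. For a finitely supported vector $x = \sum_{i \in F} c_{i}\BaseVector{i}$ we have, with respect to the Hilbert\==Schmidt inner product,
\begin{shorteqnarray}
  \norm{V_{\Phi}x}_{2}^{2}
  &= &\sum_{i,j \in F} \quer{c_{j}}\,c_{i}\,\tr(\zeta^{(\Phi)}_{i}(\zeta^{(\Phi)}_{j})^{\ast})
  = \sum_{i,j \in F} \quer{c_{j}}\,c_{i}\,\tr(\Phi(\ElementaryMatrix{i}{j}))
  = \tr(\Phi(\ketbra{x}{x})),
\end{shorteqnarray}
where the middle equality uses \eqcref{eq:separation:sig:article-graph-raj-dahya} and the last uses linearity of $\Phi$ together with $\ketbra{x}{x} = \sum_{i,j} c_{i}\quer{c_{j}}\ElementaryMatrix{i}{j}$.

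Since $\ketbra{x}{x} \geq \zeromatrix$ and $\Phi$ is positive, we get
\[
  \tr(\Phi(\ketbra{x}{x})) = \norm{\Phi(\ketbra{x}{x})}_{1} \leq \norm{\Phi}\,\norm{\ketbra{x}{x}}_{1} = \norm{\Phi}\,\norm{x}^{2}.
\]
Hence $\norm{V_{\Phi}x} \leq \norm{\Phi}^{1/2}\norm{x} \leq \norm{\Phi}_{\text{cb}}^{1/2}\norm{x}$ on $\mathcal{D}$, which gives boundedness and the continuous extension. If $\Phi$ is CC, then $\norm{\Phi} \leq 1$ and $V_{\Phi}$ is a contraction. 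If $\Phi$ is TP, the identity above gives $\norm{V_{\Phi}x}^{2} = \tr(\ketbra{x}{x}) = \norm{x}^{2}$ on $\mathcal{D}$, so $V_{\Phi}$ is isometric there, and by continuity on all of $H_{1}$. Alternatively, this follows from the orthonormality of $\{\zeta^{(\Phi)}_{i}\}$ in \Cref{lemm:resolution:sig:article-graph-raj-dahya}.

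The only delicate point is justifying the trace identity, i.e.\ that trace and the finite sums commute and that positive trace-class operators satisfy $\tr = \norm{\cdot}_{1}$. Both are standard, so I expect no real obstacle.
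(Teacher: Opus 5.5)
Your proposal is correct and follows the paper's proof essentially step for step. Both arguments use the same computation $\norm{Vx}^{2} = \tr(\Phi(\ketbra{x}{x})) = \norm{\Phi(\ketbra{x}{x})}_{1} \leq \norm{\Phi}\,\norm{x}^{2}$ on the span of the basis vectors, followed by extension by continuity. The only cosmetic difference is in the trace-preserving case: you apply trace preservation directly to get $\norm{V_{\Phi}x} = \norm{x}$, whereas the paper says that the inequality becomes an equality.
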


  \begin{proof}
    Using the vectors
    $
      \{\zeta^{(\Phi)}_{i}\}_{i \in I}
      \subseteq \HilbertRaum
    $
    let
    $V : \linspann\{\BaseVector{i} \mid i \in I\} \to \BoundedOps{\HilbertRaum}$
    denote the unique linear operation satisfying
    $V\BaseVector{i} = \zeta^{(\Phi)}_{i}$ for $i \in I$.
    Consider an arbitrary
      $x \in \linspann\{\BaseVector{i} \mid i \in I\}$,
    \idest
      $x = \sum_{i \in F}x_{i}\BaseVector{i}$
    for some finite $F \subseteq I$
    where $\{x_{i}\}_{i \in F} \subseteq \complex$.
    Then $V x = \sum_{i \in F}x_{i}\:\zeta^{(\Phi)}_{i}$
    and

      \begin{longeqnarray}
        \norm{V x}^{2}
          &=
            &\sum_{i,j \in F}
              x_{i}\:x_{j}^{\ast}
              \:\braket{\zeta^{(\Phi)}_{j}}{\zeta^{(\Phi)}_{i}}_{\HilbertRaum}
            \\
          &=
            &\sum_{i,j \in F}
              x_{i}\:x_{j}^{\ast}
              \:\tr(\zeta^{(\Phi)}_{i}\:(\zeta^{(\Phi)}_{j})^{\ast})
            \\
          &\eqcrefoverset{eq:separation:sig:article-graph-raj-dahya}{=}
            &\sum_{i,j \in F}
              x_{i}\:x_{j}^{\ast}
              \:\tr(\Phi(\ElementaryMatrix{i}{j}))
            \\
          &=
            &\tr(
              \Phi(
                \ketbra{
                  \sum_{i \in F}
                    x_{i}\BaseVector{i}
                }{
                  \sum_{j \in F}
                    x_{j}\BaseVector{j}
                }
              )
            )
            \\
          &=
            &\tr(\Phi(\ketbra{x}{x}))
            \\
          &\overset{(\ast)}{=}
            &\norm{\Phi(\ketbra{x}{x})}_{1}
            \\
          &\overset{(\ast\ast)}{\leq}
            &\norm{\Phi}\:\norm{\ketbra{x}{x}}_{1}
          =
            \norm{\Phi}\:\norm{x}^{2},
      \end{longeqnarray}

    \continueparagraph
    where ($\ast$) holds since $\Phi$ is (completely) positive
    and ($\ast\ast$) holds by virtue of $\Phi$ being (completely) bounded.
    It follows that $V$ uniquely extends to a bounded operator $V_{\Phi}$
    with $\norm{V_{\Phi}} \leq \sqrt{\norm{\Phi}}$
    defined on $H_{1} = \quer{\linspann}\{\BaseVector{i} \mid i \in I\}$.
    If $\Phi$ is completely contractive, then $\norm{\Phi} \leq 1$,
    making $V_{\Phi}$ a contraction.
    And if $\Phi$ is trace-preserving,
    then the inequality at ($\ast\ast$) is an equality
    with $\norm{\Phi} = 1$,
    rendering $V_{\Phi}$ an isometry.
  \end{proof}



We shall refer to the families
$
  \{
    L^{(\Phi)}_{i,j}
    \coloneqq
      \hat{L}_{i,j}\:\sqrt{D_{j,j}}
    =
      \hat{L}^{(i)}_{i,j}\:\sqrt{D^{(j)}_{j,j}}
  \}_{i,j \in I}
$
and
$\{\zeta^{(\Phi)}_{i}\}_{i \in I}$
in \Cref{lemm:resolution:sig:article-graph-raj-dahya}
as
the \highlightTerm{Choi\==Cholesky decomposition}
\resp \highlightTerm{resolution} of $\Phi$.
Note that whilst the resolution itself may not be unique,
its construction via the Choi\==Cholesky decomposition
provides a canonical approach.

\begin{rem}[Constructibility of the resolution]
\makelabel{rem:constructibility:sig:article-graph-raj-dahya}
  By inspecting the proofs of
  \Cref{lemm:cholesky:existence:sig:article-graph-raj-dahya}
  and \Cref{lemm:resolution:sig:article-graph-raj-dahya},
  the construction of the entries
  of the Choi\==Cholesky decompositions
  as well as the resolution vectors
  can be explicitly described.
  For convenience, we have captured these in Appendix \ref{app:alg-chol:sig:article-graph-raj-dahya}.
  \Cref{alg:bi-partite-cholesky:sig:article-graph-raj-dahya}
  makes clear that for each $i, j \in I$
  the operator
  $L^{(\Phi)}_{i, j}$
  can be constructed from
  $\{\Phi(\ElementaryMatrix{i'}{j'})\}_{i',j'=1}^{i}$
  using operations in the class $\mathbb{O}$
  defined in \S{}\ref{sec:intro:results:sig:article-graph-raj-dahya}.
  In particular, a map
  ${F_{i, j} : \FiniteRankOps{H_{2}}^{i \times i} \to \BoundedOps{H_{2}}}$
  in $\mathbb{O}$
  exists,
  such that
  $L^{(\Phi)}_{i,j} = F_{i,j}(\{\Phi(\ElementaryMatrix{i'}{j'})\}_{i',j'=1}^{i})$.
  And by \Cref{alg:resolution:sig:article-graph-raj-dahya},
  one has
  $
    \zeta^{(\Phi)}_{n}
    =
      \sum_{k=1}^{n}
        \bra{\BaseVector{k}}
        \otimes
        F_{n,k}(\{\Phi(\ElementaryMatrix{i}{j})\}_{i,j=1}^{n})
  $
  for each $n \in I$.
\end{rem}

\begin{rem}[Coherence]
\makelabel{rem:coherence:sig:article-graph-raj-dahya}
  A key feature of the construction in \Cref{lemm:resolution:sig:article-graph-raj-dahya}
  (\cf also \Cref{alg:bi-partite-cholesky:sig:article-graph-raj-dahya} in the appendix)
  is the coherence of the recursive construction.
  That is, the constructions
  $\{\hat{L}^{(m)}_{i,j}\}_{i,j=1}^{m}$,
  $\{D^{(m)}_{i,j}\}_{i,j=1}^{m}$,
  $\{L^{(m)}_{i,j}\}_{i,j=1}^{m}$
  are subsequences of
  $\{\hat{L}^{(n)}_{i,j}\}_{i,j=1}^{n}$,
  $\{D^{(n)}_{i,j}\}_{i,j=1}^{n}$,
  $\{L^{(n)}_{i,j}\}_{i,j=1}^{n}$
  for each $m,n \in I$ with $m < n$.
  In other words, the partial computations of the decompositions for $\Phi$
  restricted to lower dimensions remain preserved as the dimension increases.
  By contrast, diagonalisations of the finite dimensional Choi matrices
  do not in general yield a coherent sequence of eigenvectors.
\end{rem}

\begin{rem}[Dimension of $H_{1}$]
  The separability of $H_{1}$ was needed
  in \Cref{lemm:resolution:sig:article-graph-raj-dahya}
  in order to coherently patch together the bi-partite Cholesky decompositions
  of the Choi matrices $\Choi{\Phi}^{(F)}$.
  If $H_{1}$ is not separable,
  then one may for example turn to the \highlightTerm{compactness theorem}
  from mathematical logic and model theory
  (\cf
    \cite[Theorem~10.6]{Kaye2007BookMathematicalLogic}%
  ),
  via which one can demonstrate
  the consistency of asserting the existence of a family
    $\{\zeta^{(\Phi)}_{i}\}_{i \in I}$
  satisfying
  $
    \Phi(\ElementaryMatrix{i}{j})
    = \zeta^{(\Phi)}_{i}\:(\zeta^{(\Phi)}_{j})^{\ast}
  $
  for all $i,j \in I$.
  Use of the compactness theorem, however,
  comes at the price of sacrificing constructibility.
\end{rem}



\begin{e.g.}[Resolution of adjoints]
\makelabel{e.g.:resolution-adjoints:sig:article-graph-raj-dahya}
  Let $H_{1}$, $H_{2}$ be finite-dimensional Hilbert spaces
  and let $\{\BaseVector{i}\}_{i \in I}$
  be an ONB for $H_{1}$
  where
  $I = \naturals$
  or
  $\{1,2,\ldots,N\}$ for some $N \in \naturals$.
  Consider an isometry $V \in \BoundedOps{H_{1}}{H_{2}}$
  and the CPTP\=/map $\Phi = \ad{V}$.
  Applying
  \Cref{alg:bi-partite-cholesky:sig:article-graph-raj-dahya}
  and
  \Cref{alg:resolution:sig:article-graph-raj-dahya} to $\Phi$,
  one can verify that

  \begin{shorteqnarray}
      L^{(\ad{V})}_{i,j}
        = \delta_{j,1}\:\ad{V}\ElementaryMatrix{i}{1}
  \end{shorteqnarray}

  \continueparagraph
  are the entries
  of the Choi\==Cholesky decomposition of $\Phi$
  for $i, j \in I$,
  and that

  \begin{shorteqnarray}
    \zeta^{(\ad{V})}_{n}
      =
        \sum_{k = 1}^{i}
          \bra{\BaseVector{k}}
          \otimes
          \delta_{k,1}\:\ad{V}\ElementaryMatrix{n}{1}
      =
        \bra{\BaseVector{1}}
        \otimes
        \ad{V}\ElementaryMatrix{n}{1}
  \end{shorteqnarray}

  \continueparagraph
  for $n \in I$
  are the elements of the resolution of $\Phi$.
\end{e.g.}





\section[Proof of the main results]{Proof of the main results}
\label{sec:results:sig:article-graph-raj-dahya}


\firstparagraph
We now possess the means to prove the main results.

\begin{proof}[\Cref{thm:choi-cholesky-rep:sig:article-graph-raj-dahya}]
  Towards the \usesinglequotes{if}-direction,
  if a contraction (\resp an isometry) $V_{\Phi} \in \BoundedOps{H_{1}}{\HilbertRaum}$
  satisfying \eqcref{eq:choi-cholesky-representation:sig:article-graph-raj-dahya} exists,
  then $\Phi$ is the composition of two CPCB- (\resp CPCC- \resp CPTP-) maps
  and therefore itself a CPCB- (\resp CPCC- \resp CPTP-) map.

  Towards the \usesinglequotes{only if}-direction,
  due to the assumptions we may apply the resolution lemma
  (see \Cref{lemm:resolution:sig:article-graph-raj-dahya})
  which yields a family of vectors
  $
    \{\zeta^{(\Phi)}_{i}\}_{i \in I}
    \subseteq L^{2}(H_{1} \otimes H_{2}, H_{2})
    = \HilbertRaum
  $
  satisfying \eqcref{eq:separation:sig:article-graph-raj-dahya}.
  By \Cref{cor:lemm:resolution:sig:article-graph-raj-dahya}
  there exists a unique bounded operator
  (\resp contraction in the CPCC case
  \resp isometry in the CPTP case)
  ${
    V \coloneqq V_{\Phi}
    : H_{1} \to H
  }$
  satisfying
  $
    V\BaseVector{i}
    = \zeta^{(\Phi)}_{i}
  $
  for $i \in I$.
  The CPTP\=/map
  $
    \Psi
    \coloneqq \Psi_{H_{1},H_{2}}
    : L^{1}(H) \to L^{1}(H_{2})
  $
  from \S{}\ref{sec:intro:results:sig:article-graph-raj-dahya}
  yields

    \begin{shorteqnarray}
      \Phi(\ElementaryMatrix{i}{j})
        \eqcrefoverset{eq:separation:sig:article-graph-raj-dahya}{=}
          \zeta^{(\Phi)}_{i}\:(\zeta^{(\Phi)}_{j})^{\ast}
        \eqcrefoverset{eq:property:universal-cptp:sig:article-graph-raj-dahya}{=}
          \Psi(\ketbra{\zeta^{(\Phi)}_{i}}{\zeta^{(\Phi)}_{j}})
        =
          \Psi(\ketbra{V \BaseVector{i}}{V \BaseVector{j}})
        =
          \Psi(\ad{V}\ElementaryMatrix{i}{j})
    \end{shorteqnarray}

  \continueparagraph
  for all $i,j \in I$.

  Recall that $\Psi$ is a CPTP\=/map and thus a complete contraction.
  Similarly $\Phi$ is assumed to be bounded
  and $\ad{V}$ is bounded by virtue of $V$ being a bounded operator.
  So since $\Phi$ and $\Psi \circ \ad{V}$
  are bounded \wrt the $L^{1}$-norm
  and
    $
      \linspann\{\ElementaryMatrix{i}{j} \mid i,j \in I\}
    $
  is $L^{1}$-dense in $L^{1}(H_{1})$
  (\cf
    \cite[Theorem~2.4.17]{Murphy1990}%
  ),
  it follows from the above computation that
  $
    \Phi(s) = \Psi(\ad{V}\:s)
  $
  for all $s \in L^{1}(H_{1})$.
  This establishes the existence of a bounded operator
  (\resp contraction \resp an isometry)
  which satisfies \eqcref{eq:choi-cholesky-representation:sig:article-graph-raj-dahya}.
\end{proof}



To prove the second main result, we rely on the construction of the resolution.

\def\beweislabel{thm:choi-cholesky-rep-computability:sig:article-graph-raj-dahya}
\begin{proof}[\Cref{\beweislabel}]
  For each CPCB FP\=/map $\Phi$ we let $\mathfrak{C}(\Phi) \coloneqq V_{\Phi}$,
  where $V_{\Phi}$ is constructed as in the proof of \Cref{thm:choi-cholesky-rep:sig:article-graph-raj-dahya}.
  In particular, \punktcref{1} is immediately satisfied.

  Towards \punktcref{2},
  by \Cref{rem:constructibility:sig:article-graph-raj-dahya},
  there exist maps
  ${F_{n,k} : \BoundedOps{H_{2}}^{n \times n} \to \BoundedOps{H_{2}}}$
  in the class $\mathbb{O}$
  for $k,n \in I$ with $k \leq n$
  such that
  $
    \mathfrak{C}(\Phi)\BaseVector{n}
    = V_{\Phi}\BaseVector{n}
    = \zeta^{(\Phi)}_{n}
    = \sum_{k=1}^{n}
        \bra{\BaseVector{k}}
        \otimes
        F_{n,k}(\{\Phi(\ElementaryMatrix{i}{j})\}_{i,j=1}^{n})
  $
  for each CPCB FP\=/map $\Phi$
  and $n \in I$.
  Clearly then, $\mathfrak{C}(\Phi)\BaseVector{n}$
  can be constructed from $\{\Phi(\ElementaryMatrix{i}{j})\}_{i,j=1}^{n}$
  via an operation in the class $\mathbb{O}$.

  Towards \punktcref{3}, assume that $H_{2}$ is separable.
  By restricting $\mathfrak{C}$
  to the subspace $\mathcal{X}_{1}$ of CPCC\=/maps,
  we can replace $\mathcal{Y}$
  by the subspace $\mathcal{Y}_{1}$ of contractions.
  By uniform boundedness of the operators in $\mathcal{Y}_{1}$,
  and since the underlying Hilbert space is
  $\HilbertRaum = L^{2}(H_{1} \otimes H_{2}, H_{2})$,
  the $\topWOT$-topology is induced by the maps
  ${
      \mathcal{Y}_{1} \ni V
      \mapsto
      \braket{
        (
          \bra{\BaseVector{k}}
          \otimes
          \ketbra{\xi}{\eta}
        )
      }{
        V\BaseVector{n}
      }_{\HilbertRaum}
      \in\complex
  }$
  for $n, k \in I$, $\xi,\eta \in H_{2}$.
  It thus suffices to prove that

    \begin{shorteqnarray}
      \mathcal{X} &\to &\complex\\
      \Phi
      &\mapsto
      &\begin{array}[t]{0l}
        \braket{
          \Big(
            \bra{\BaseVector{k}}
            \otimes
            \ketbra{\xi}{\eta}
          \Big)
        }{
          V_{\Phi}\BaseVector{n}
        }_{\HilbertRaum}\\
        = \tr\Big(
            \Big(
              \bra{\BaseVector{k}}
              \otimes
              \ketbra{\xi}{\eta}
            \Big)^{\ast}
            \:V_{\Phi}\BaseVector{n}
          \Big)
      \end{array}
    \end{shorteqnarray}

  \continueparagraph
  is Borel-measurable
  for fixed $n,k \in I$ and $\xi,\eta \in H_{2}$.
  Now for fixed $\Phi \in \mathcal{X}$
  one has

    \begin{shorteqnarray}
      \tr\Big(
        \Big(
          \bra{\BaseVector{k}}
          \otimes
          \ketbra{\xi}{\eta}
        \Big)^{\ast}
        \:V_{\Phi}\BaseVector{n}
      \Big)
        &=
          &\sum_{k'=1}^{n}
            \tr\Big(
              \Big(
                \bra{\BaseVector{k}}
                \otimes
                \ketbra{\xi}{\eta}
              \Big)^{\ast}
              \:\Big(
                \bra{\BaseVector{k'}}
                \otimes
                F_{n,k'}(\{\Phi(\ElementaryMatrix{i}{j})\}_{i,j=1}^{n})
                \Big)
            \Big)
          \\
        &=
          &\einser_{k \leq n}
            \:\tr(
              \ketbra{\eta}{\xi}
              \:F_{n,k}(\{\Phi(\ElementaryMatrix{i}{j})\}_{i,j=1}^{n})
            ),
    \end{shorteqnarray}

  \continueparagraph
  whence it suffices to only consider the case $k \leq n$.
  Noting that the map
  ${\BoundedOps{H_{2}} \ni T \to \tr(\ketbra{\eta}{\xi}\:T) \in \complex}$
  is continuous \wrt the norm topology,
  it further suffices to prove the Borel-measurability of
  ${
    \mathcal{X}
    \ni \Phi
    \mapsto
    F_{n,k}(\{\Phi(\ElementaryMatrix{i}{j})\}_{i,j=1}^{n})
    \in \BoundedOps{H_{2}}
  }$.

  Now as explained \S{}\ref{sec:intro:results:sig:article-graph-raj-dahya},
  by separability of $H_{2}$,
  the operations in the class $\mathbb{O}$
  are Borel-measurable under the norm topology.
  So since $F_{n,k}$ is in $\mathbb{O}$,
  it suffices to prove that
  ${
    \mathcal{X} \ni \Phi
    \mapsto
    \{\Phi(\ElementaryMatrix{i}{j})\}_{i,j=1}^{n}
    \in \BoundedOps{H_{2}}^{n \times n}
  }$
  is Borel-measurable.
  By definition of the $\topSOT$-topology on $\mathcal{X}$
  and since the identity map
  ${L^{1}(H_{2}) \to \BoundedOps{H_{2}}}$
  is a contraction and thus continuous,%
  \footnote{%
    this can be proved via the H{\"o}lder\==von~Neumann inequality.
  }
  one has that in fact
  ${
    \mathcal{X} \ni \Phi
    \mapsto
    \Phi(\ElementaryMatrix{i}{j})
    \in \BoundedOps{H_{2}}
  }$
  is continuous for each $i,j \in I$.
  Hence the restriction
  ${
    \mathfrak{C}\restr{\mathcal{X}_{1}}
    :\mathcal{X}_{1}
    \to
    \mathcal{Y}_{1}
  }$
  is Borel-measurable as claimed.
\end{proof}



\begin{rem}[Finite rank restriction]
  The arguments the proof of the bi-partite Cholesky decomposition
  (\Cref{lemm:cholesky:existence:sig:article-graph-raj-dahya})
  appear to work if the FP requirement is dropped
  and the ideal is replaced by $\mathcal{K} = L^{1}(H_{2})$.
  Only step \ref{it:majorisation:lemm:cholesky:existence:sig:article-graph-raj-dahya}
  of this proof fails,
  which appeals to the majorisation lemma
  (\Cref{lemm:cholesky:existence:sig:article-graph-raj-dahya})
  and makes critical use of pseudo-inverses
  (\cf \Cref{rem:majorisation-fp-necessary:sig:article-graph-raj-dahya}).
  It would be useful to know if a constructive representation of CP(TP)\=/maps
  \akin \Cref{thm:choi-cholesky-rep:sig:article-graph-raj-dahya}
  can be achieved without the FP restriction.
\end{rem}

\begin{rem}[Continuity]
  At present,
  no version of Kraus's \Second representation theorem appears to be known,
  in which the unitary operator in \eqcref{eq:kraus:II:sig:article-graph-raj-dahya}
  depends continuously on the CPTP\=/map.
  At best there exists a continuous version for Stinespring dilations
  (on which Kraus's \First representation theorem rests),
  applicable to one-parameter families
  \cite{Parthasarathy1990Incollection}.
  By
  \Cref{thm:choi-cholesky-rep-computability:sig:article-graph-raj-dahya},
  a Borel-measurable version of Kraus's \Second representation has been achieved
  via a canonical explicitly described construction.
  Our reliance on spectral theory (in particular for pseudo-inverses)
  appears to be the chief hindrance to continuity.
\end{rem}

\begin{rem}[Computability]
  Outside of mathematical logic,
  where the \emph{computability} term has been settled,
  different notions exist for the field of analysis
  (\cf
    \cite[\S{}9.8]{Weihrauch2000BookComputableAnalysis},
    \cite[\S{}1]{Braverman2005Inproceedings},
    \cite[\S{}1]{BravermanCook2006Article}%
  ),
  which go beyond the scope of this paper.
  Nonetheless, a few points are worth mentioning:
  In the finite-dimensional setting,
  whilst Choi's proof of Kraus's \First (and therefore \Second)
  representation of CPTP\=/maps is not canonical
  due to reliance on matrix diagonalisation
  \cite[Theorem~1 and Remark~4]{Choi1975Article},
  his approach appears to yield computable constructions
  in the Borel\==Turing sense.%
  \footnote{%
    loosely, this notion of computability incorporates numerical stability,
    \cf
    \cite[Chapters~4,5,8, and 9]{Weihrauch2000BookComputableAnalysis},
    \cite[\S{}2.9]{AvigadBrattka2014Incollection}.
  }
  By contrast,
  our reliance on pseudo-inverses suggests
  that our construction might be non-computable in the same sense.
  Indeed, for separable Hilbert spaces $H_{1}$ and $H_{2}$
  with $\dim(H_{1}) \geq 2$,
  our methods yield

    \begin{shorteqnarray}
      \{L^{(\Phi)}_{i,j}\}_{i,j=1}^{2}
      = \begin{matrix}{cc}
          \sqrt{\Phi(\ElementaryMatrix{1}{1})}
            &\zeromatrix
          \\
          \Phi(\ElementaryMatrix{2}{1})
          \:\sqrt{\Phi(\ElementaryMatrix{1}{1})^{\dagger}}
            &\sqrt{
              \Phi(\ElementaryMatrix{2}{2})
              -
              \Phi(\ElementaryMatrix{2}{1})
              \Phi(\ElementaryMatrix{1}{1})^{\dagger}
              \Phi(\ElementaryMatrix{2}{1})^{\ast}
            }
      \end{matrix}
    \end{shorteqnarray}

  \continueparagraph
  and thus

    \begin{shorteqnarray}
      \zeta^{(\Phi)}_{1}
        &=
          &\bra{\BaseVector{1}}
          \otimes
          \sqrt{\Phi(\ElementaryMatrix{1}{1})},
          \quad\text{and}
          \\
      \zeta^{(\Phi)}_{2}
          &=
            &\bra{\BaseVector{1}}
            \otimes
            \Phi(\ElementaryMatrix{2}{1})
            \:\sqrt{\Phi(\ElementaryMatrix{1}{1})^{\dagger}}
            \\
            &&\:+
            \bra{\BaseVector{2}}
            \otimes
            \sqrt{
              \Phi(\ElementaryMatrix{2}{2})
              -
              \Phi(\ElementaryMatrix{2}{1})
              \Phi(\ElementaryMatrix{1}{1})^{\dagger}
              \Phi(\ElementaryMatrix{2}{1})^{\ast}
            },
    \end{shorteqnarray}

  \continueparagraph
  for any CPCB\=/map
  ${\Phi : L^{1}(H_{1}) \to L^{1}(H_{2})}$.
  Recent developments \cite{BocheFonoKutyniok2026ArticleNonComputablePseudoInverse}
  indicate that $\mathfrak{C}(\Phi)\BaseVector{2} = \zeta^{(\Phi)}_{2}$
  may fail to be Borel\==Turing computable
  in $\{\Phi(\ElementaryMatrix{i}{j})\}_{i,j=1}^{2}$
  \wrt the norm topology.
  However, our constructions appear to be at least \highlightTerm{effectively computable}.%
  \footnote{%
    roughly, this extends Turing-computability
    to separable spaces,
    \cf
    \cite[\S{}40.B]{Kechris1995BookDST},
    \cite[\S{}3D--E]{Moschovakis2009DstBook}.
  }
\end{rem}

This paper has achieved its primary goal of obtaining
dilations via standard operations which can all be implemented
in modern programming languages.
The chief advantages of our construction
over the approach taken in the literature
include its applicability to the infinite-dimensional setting
and its canonical nature relative to the choice of ONB.
Addressing the question posed at the start of the paper,
we have achieved
explicitly described
natural dilations of CP\=/maps.
Possible applications \exempli
to the study of quantum states,
quantum channel tomography
(\cf
  \cite{%
      BenderskyPaz2013Article,%
      BaldwinKalevDeutsch2014Article,%
      BelovDubovIvanov2026Misc%
  }%
),
\etcetera
remain an area of keen interest.




\setcounternach{section}{1}
\documentpartappendix



\section[Measurability of spectral maps]{Measurability of spectral maps}
\label{app:spectral+mb:sig:article-graph-raj-dahya}

\firstparagraph
Consider the subspaces
$
  \FiniteRankOps{H}^{+}
  \subseteq \FiniteRankOps{H}_{\text{s-a}}
  \subseteq \BoundedOps{H}
$
of positive \resp self-adjoint finite rank operators
over a Hilbert space $H$.
If $H$ is separable,
these form separable metrisable spaces
under the operator norm
(\cf
  \cite[Remark~4.1.6]{Murphy1990}%
).
In the following we demonstrate the measurability
of certain operator theoretic constructions
\wrt the norm topology,
which are readily derived
from a basic understanding of spectral theory.


\begin{prop}
\makelabel{prop:norm-measurability-spectral-maps:sig:article-graph-raj-dahya}
  Let $H$ be a separable Hilbert space.
  For every
  $f \in \Cts{\reals}$,
  the corresponding restricted spectral map
  ${\hat{f} : \FiniteRankOps{H}_{\text{s-a}} \to \FiniteRankOps{H}_{\text{s-a}}}$
  is Borel-measurable.
\end{prop}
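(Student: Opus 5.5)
Fix $f \in \Cts{\reals}$ and write $\hat{f}(a) = f(a)$ via the continuous functional calculus. Note that $f(a)$ has finite rank only when $f(0) = 0$; otherwise the map is understood into $\BoundedOps{H}_{\text{s-a}}$, which the argument below also covers. I will first reduce to continuity on norm-bounded pieces and then use polynomial approximation. The space $X \coloneqq \FiniteRankOps{H}_{\text{s-a}}$ is a countable union of the norm-closed (in $X$) balls $X_{m} \coloneqq \{a \in X \mid \norm{a} \leq m\}$, $m \in \naturals$. Borel-measurability of $\hat{f}$ therefore follows once each restriction $\hat{f}\restr{X_{m}}$ is Borel. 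Indeed, for an open set $U$ one has $\hat{f}^{-1}(U) = \bigcup_{m}(\hat{f}\restr{X_{m}})^{-1}(U)$, and each term is Borel in $X_{m}$, hence in $X$.

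The key step is to show that $\hat{f}\restr{X_{m}}$ is in fact norm-continuous, which is stronger than needed. By Weierstrass, choose polynomials $p_{k}$ with real coefficients such that $\sup_{t \in [-m,m]}\abs{f(t) - p_{k}(t)} \leq 1/k$. For $a \in X_{m}$ we have $\sigma(a) \subseteq [-m,m]$, so the functional calculus is isometric on $C(\sigma(a))$. This gives $\norm{f(a) - p_{k}(a)} \leq 1/k$ uniformly in $a \in X_{m}$. Each map $a \mapsto p_{k}(a)$ is norm-continuous, since it is built from sums, scalar multiples and products. A uniform limit of continuous maps is continuous, so $\hat{f}\restr{X_{m}}$ is continuous.

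Next I check that $\hat{f}$ lands in the claimed space. For finite rank self-adjoint $a$, the spectral decomposition $a = \sum_{\lambda} \lambda P_{\lambda}$ has finitely many nonzero eigenvalues with finite-rank eigenprojections, and $f(a) = \sum_{\lambda \neq 0} f(\lambda) P_{\lambda} + f(0) P_{\ker a}$. This operator is self-adjoint, and it has finite rank precisely when $f(0) = 0$.

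The main obstacle is noticing that no global continuity is available. Norm-continuity can fail without the boundedness restriction when $f$ is merely continuous, because spectra are unbounded on $X$. Separability of $H$ matters only in that it makes $X$ separable metrisable, so the Borel structure is the standard one. With the $\sigma$-compact-style decomposition into the $X_{m}$, the remaining argument is routine.
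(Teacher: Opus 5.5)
Your proof is correct, but it takes a different route from the paper. The paper proceeds in three steps:
\begin{itemize}
  \item it truncates $f$ to the bounded continuous functions $f_n = h_n f$ and invokes the $\topSOT$-continuity of $T \mapsto f_n(T)$ (citing Murphy, Thm.~4.3.2);
  \item it transfers this to norm-Borel measurability via the coincidence of the $\topSOT$- and norm-Borel structures, which is where separability of $H$ enters;
  \item it concludes with the lemma that pointwise limits of Borel maps into a separable metrisable space are Borel, using that $f_n(T) = f(T)$ for large $n$.
\end{itemize}
You instead obtain norm-continuity directly from Weierstrass approximation on $[-m,m]$ and the isometric functional calculus. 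This is more elementary: it needs no strong-continuity theorem, no comparison of $\sigma$-algebras and no limit lemma. It also gives a stronger conclusion with no separability hypothesis, since continuous maps are Borel between arbitrary topological spaces.

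Your argument in fact proves more than you claim. The remark that norm-continuity ``can fail without the boundedness restriction'' is mistaken, because continuity is local. Every $a$ lies in the norm-open set $\{b \in X \mid \|b\| < m\} \subseteq X_m$ once $m > \|a\|$, so $\hat{f}$ is norm-continuous on all of $X$, and the countable decomposition into the $X_m$ is superfluous. Only uniform continuity fails globally.

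Your observation on the codomain is right, and it exposes an imprecision in the statement itself. For infinite-dimensional $H$, $f(a)$ has finite rank only when $f(0)=0$. This does hold for $t\mapsto\sqrt{\abs{t}}$ and $t \mapsto t/(t^{2}+2^{-n})$, the functions to which the paper later applies the proposition.

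The paper's pointwise-limit machinery is what it reuses for the pseudo-inverse, which is genuinely discontinuous. Your continuity result feeds into that later step just as well.
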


  \begin{proof}
    For each $n\in\naturals$ let $h_{n} \in \Cts{\reals}$
    be any continuous map with $h_{n} \equiv 1$ on $[-n,\:n]$
    and $h_{n} \equiv 0$ on $\reals \without [-(n+1),\:(n+1)]$.
    Then each $f_{n} \coloneqq h_{n} \cdot f$ is a bounded continuous function
    and
    for each self-adjoint $T \in \BoundedOps{H}$
    the spectral theorem yields
    $
      \hat{f}_{n}(T)
      = \widehat{f_{n}\restr{\sigma(T)}}(T)
      = \widehat{f\restr{\sigma(T)}}(T)
      = \hat{f}(T)
    $
    for sufficiently large $n\in\naturals$.
    Thus
    ${
      \widehat{f}_{n}
      \underset{n}{\longrightarrow}
      \hat{f}
    }$
    pointwise.
    Since the $f_{n}$ are bounded continuous functions,
    the spectral maps
      $\widehat{f}_{n}$
    are strongly continuous
    (see \exempli
      \cite[Theorem~4.3.2]{Murphy1990}%
    )
    and thus Borel-measurable \wrt the norm topology.%
    \footnote{%
      By separability,
      the basic open sets under the norm topology
      are Borel-measurable (in fact $F_{\sigma}$)
      under the $\topSOT$-topology,
      since
      $
        \{T \in \FiniteRankOps{H}_{\text{s-a}} \mid \norm{T - S} < r\}
        =
        \bigcup_{r'\in(0,\:r) \cap \rationals}
        \bigcap_{\xi \in D}
        \{T \in \FiniteRankOps{H}_{\text{s-a}} \mid \norm{(T - S)\xi} \leq r'\norm{\xi}\}
      $
      for all $S \in \FiniteRankOps{H}_{\text{s-a}}$, $r >  0$,
      where $D \subseteq H$ is any countable dense subset of $H$.
      Conversely, since norm convergence implies strong convergence,
      strongly open sets are open under the norm topology.
      Since the Borel sets are generated from open sets,
      it follows that the Borel $\sigma$-algebra
      induced by the $\topSOT$- and norm topologies
      coincide.
    }
    Since the pointwise limit of Borel-measurable functions
    between separable metrisable spaces is Borel-measurable,%
    \footnoteref{ft:ptwise-limit-of-mb-is-mb:sig:article-graph-raj-dahya}
    \footnotetext[ft:ptwise-limit-of-mb-is-mb:sig:article-graph-raj-dahya]{%
      Let $\mathcal{W} \subseteq \FiniteRankOps{H}_{\text{s-a}}$
      and
      ${h, h_{1}, h_{2}, \ldots : \mathcal{W} \to \FiniteRankOps{H}_{\text{s-a}}}$
      be maps for which each $h_{n}$ is Borel-measurable and
      ${h_{n} \underset{n}{\longrightarrow} h}$
      pointwise.
      Since $\FiniteRankOps{H}_{\text{s-a}}$
      is a separable metrisable space under the operator norm,
      to show the Borel-measurability of $h$
      it suffices to demonstrate the measurability
      of the pre-images of basic open sets.
      Considering
      $
        V
        \coloneqq \oBall{S}{r}
        \coloneqq \{T \in \FiniteRankOps{H}_{\text{s-a}} \mid \norm{T - S} < r\}
      $
      where $S \in \FiniteRankOps{H}_{\text{s-a}}$ and $r > 0$,
      one has
      $
        h^{-1}(V)
        = \{
            T \in \mathcal{W}
            \mid
            \lim_{n}
            h_{n}(T)
            \in V
          \}
        = \{
            T \in \mathcal{W}
            \mid
            \norm{
              \lim_{n}
              h_{n}(T)
              -
              S
            }
            < r
          \}
        = \bigcup_{r' \in (0,\:r) \cap\rationals}
          \bigcup_{n \in \naturals}
          \bigcap_{k=n}^{\infty}
          \{
            T \in \mathcal{W}
            \mid
            \norm{
              h_{k}(T)
              -
              S
            }
            < r'
          \}
        = \bigcup_{r' \in (0,\:r) \cap\rationals}
          \bigcup_{n \in \naturals}
          \bigcap_{k=n}^{\infty}
          h_{k}^{-1}(\oBall{S}{r'})
      $,
      which is Borel-measurable.
      \Cf also
      \cite[Theorem~11.6]{Kechris1995BookDST}.
    }
    it follows that $\hat{f}$ is Borel-measurable.
  \end{proof}

\begin{e.g.}
\makelabel{e.g.:norm-measurability-of-sqrt:sig:article-graph-raj-dahya}
  Let $H$ be a separable Hilbert space.
  Since ${\reals \ni t \mapsto \sqrt{\abs{t}}}$
  is continuous,
  the map
  ${\FiniteRankOps{H}_{\text{s-a}} \in T \mapsto \sqrt{\abs{T}} \in \FiniteRankOps{H}^{+}}$
  and thus its restriction
  ${\FiniteRankOps{H}^{+} \in T \mapsto \sqrt{T} \in \FiniteRankOps{H}^{+}}$
  are Borel-measurable \wrt the norm topology.
\end{e.g.}



The \highlightTerm{Moore\==Penrose pseudo-inverse}
of an operator $T \in \BoundedOps{H}$,
when it exists,
is the unique bounded operator $T^{\dagger} \in \BoundedOps{H}$
for which
$P \coloneqq T^{\dagger}\:T$ and $Q \coloneqq T\:T^{\dagger}$
are self-adjoint
with
$P\:T^{\dagger} = T^{\dagger} = T^{\dagger}\:Q$
and
$T\:P = T = Q\:T$
(\cf
  \cite[\S{}1.1]{BenIsraelGreville2003Book}%
).

\begin{prop}
\makelabel{prop:existence-of-pinv:sig:article-graph-raj-dahya}
  Let $H$ be a Hilbert space.
  For each $T \in \FiniteRankOps{H}_{\text{s-a}}$,
  the pseudo-inverse $T^{\dagger}$ exists
  and satisfies
  $
    T\:T^{\dagger}
    = T^{\dagger}\:T
    = \Proj_{\quer{\ran}(T)}
  $.
  Moreover, if $H$ is separable, then
  ${\hat{h} : \FiniteRankOps{H}_{\text{s-a}} \ni T \mapsto T^{\dagger} \in \BoundedOps{H}}$
  is Borel-measurable \wrt the norm topology.
\end{prop}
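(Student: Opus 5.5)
For a self-adjoint finite rank $T$, I will write down the spectral decomposition and invert on the range. Since $T$ is self-adjoint with finite rank, $\quer{\ran}(T) = \ran(T)$ is finite-dimensional and $T$-invariant, and $T$ vanishes on $\ran(T)^{\perp} = \ker(T)$. So $T = \sum_{k=1}^{m} \lambda_{k} Q_{k}$ with distinct nonzero real eigenvalues $\lambda_{k}$ and mutually orthogonal finite rank spectral projections $Q_{k}$ satisfying $\sum_{k} Q_{k} = \Proj_{\quer{\ran}(T)}$.

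I then define $T^{\dagger} \coloneqq \sum_{k} \lambda_{k}^{-1} Q_{k}$. Equivalently, $T^{\dagger} = \hat{g}(T)$ for the Borel function $g(t) \coloneqq t^{-1}$ for $t \neq 0$ and $g(0) \coloneqq 0$. Then $T T^{\dagger} = T^{\dagger} T = \sum_{k} Q_{k} = \Proj_{\quer{\ran}(T)}$, which is self-adjoint. The identities $P T^{\dagger} = T^{\dagger} = T^{\dagger} Q$ and $T P = T = Q T$ follow at once from the orthogonality of the $Q_{k}$. Uniqueness of the Moore\==Penrose inverse is standard (\cf \cite[\S{}1.1]{BenIsraelGreville2003Book}), so this settles existence and the formula.

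For measurability I will reduce to \Cref{prop:norm-measurability-spectral-maps:sig:article-graph-raj-dahya}. The function $g$ is not continuous, so I approximate it by continuous functions. For $n \in \naturals$ let $g_{n} \in \Cts{\reals}$ agree with $t \mapsto t^{-1}$ on $\{\abs{t} \geq 1/n\}$, vanish at $0$, and be linear on $[-1/n, 1/n]$, so $g_{n}(t) = n^{2} t$ there. For each fixed $T$, the nonzero spectrum of $T$ is a finite set bounded away from $0$, and $g_{n}(0) = 0 = g(0)$. Hence for all $n$ with $1/n < \min_{k} \abs{\lambda_{k}}$ we get $\hat{g}_{n}(T) = \hat{g}(T) = T^{\dagger}$ exactly, and so $\hat{g}_{n} \to \hat{h}$ pointwise, in fact eventually constantly.

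Each $\hat{g}_{n}$ is Borel-measurable on $\FiniteRankOps{H}_{\text{s-a}}$ by \Cref{prop:norm-measurability-spectral-maps:sig:article-graph-raj-dahya}. Its values are again in $\FiniteRankOps{H}_{\text{s-a}}$, since $g_{n}(0) = 0$. The pointwise-limit argument in the footnote to that proposition then gives measurability of $\hat{h}$, because all maps take values in the separable metrisable space $\FiniteRankOps{H}_{\text{s-a}}$. Composing with the continuous inclusion into $\BoundedOps{H}$ yields the claim.

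The main point needing care is that $g_{n}$ must be continuous on all of $\reals$ while still agreeing with $g$ on the spectrum of $T$ for large $n$. This works only because finite rank forces a spectral gap around $0$, and it is exactly what fails in \Cref{rem:majorisation-fp-necessary:sig:article-graph-raj-dahya}.
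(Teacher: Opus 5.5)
Your proposal is correct and follows essentially the same route as the paper. You diagonalise $T$ on its finite-dimensional range, invert the nonzero eigenvalues, and get measurability of $T \mapsto T^{\dagger}$ as a pointwise limit of the Borel-measurable spectral maps from \Cref{prop:norm-measurability-spectral-maps:sig:article-graph-raj-dahya}. The only difference is the choice of continuous approximants. The paper uses $h_{n}(t) = t/(t^{2} + 2^{-n})$, whose spectral maps converge to $T^{\dagger}$ in norm, whereas your piecewise-linear $g_{n}$ eventually agree with $t \mapsto t^{-1}$ on the spectrum of $T$, so $\hat{g}_{n}(T)$ is eventually constant. Both families vanish at $0$, which, as you rightly note, is what keeps the maps inside $\FiniteRankOps{H}_{\text{s-a}}$.
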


  \begin{proof}
    By the spectral theorem for positive compact operators
    (\cf
      \cite[Theorem~3.3.8]{Pedersen1989analysisBook}%
    ),
    there exists an (at most countable, possibly empty) orthonormal family
    $\{x_{i}\}_{i \in J} \subseteq H$
    and
    $
      \sigma(T) \without \{0\}
      = \{t_{i}\}_{i \in J}
      \subseteq \reals \without \{0\}
    $,
    such that
    $
      T = \sum_{i \in J}
        t_{i}\:\ketbra{x_{i}}{x_{i}}
    $,
    whereby the sum is computed strongly
    and the empty sum is taken to be $\zeromatrix$.
    Since $T$ has finite rank, $\sigma(T)$ and thus $J$ are finite.
    One can now readily check that
    $
      T^{\dagger}
      = \sum_{i \in J}
        t_{i}^{-1}\:\ketbra{x_{i}}{x_{i}}
    $
    and that the claimed identity holds.

    We further observe the following:
    Let $h_{n} \in \Cts{\reals}$
    be defined via $h_{n}(t) = \frac{t}{t^{2} + 2^{-n}}$
    for $t \in \realsNonNeg$, $n \in \naturals$.
    By spectral theory,
    orthonormality of $\{x_{i}\}_{i \in J}$,
    and the finitude of $J$,
    one obtains
    $
      \norm{\hat{h}_{n}(T) - \hat{h}(T)}
      = \norm{\hat{h}_{n}(T) - T^{\dagger}}
      = \normLarge{
          \sum_{i \in J}
            (h_{n}(t_{i}) - t^{-1}_{i})\:\ketbra{x_{i}}{x_{i}}
        }
      = \max_{i \in J}
          \abs{h_{n}(t_{i}) - t^{-1}_{i}}
    $,
    which converges to $0$ as ${n \longrightarrow \infty}$.
    Observe that the construction of
    $\{h_{n}\}_{n\in\naturals} \subseteq \Cts{\reals}$
    was independent of $T$.
    Now if $H$ is separable,
    we may apply \Cref{prop:norm-measurability-spectral-maps:sig:article-graph-raj-dahya}
    to obtain the Borel-measurability of each $\hat{h}_{n}$.
    And since the pointwise limit of Borel-measurable functions
    between separable metrisable spaces is Borel-measurable,%
    \footnoteref{ft:ptwise-limit-of-mb-is-mb:sig:article-graph-raj-dahya}
    it follows that $\hat{h}$ is Borel-measurable.
  \end{proof}




\section[Choi\==Cholesky decomposition]{Choi\==Cholesky decomposition}
\label{app:alg-chol:sig:article-graph-raj-dahya}

\SetKwFunction{AlgorithmChoiChol}{ChoiChol}
\SetKwFunction{AlgorithmRes}{Res}

\firstparagraph
Let
${\Phi : L^{1}(H_{1}) \to L^{1}(H_{2})}$
be a CPCB FP\=/map,
where $H_{1}$, $H_{2}$ are Hilbert spaces,
whereby $H_{1}$ is separable
with ONB $\{\BaseVector{n}\}_{n \in \naturals}$
or $\{\BaseVector{i}\}_{i=1}^{N}$
for some $N \in \naturals$.
By the Choi\==Jamio{\l}kowski correspondence (\Cref{lemm:choi-jamiolkowski:sig:article-graph-raj-dahya}),
$\Phi$ can be identified with the values
$
  \{
    \Phi(\ElementaryMatrix{i}{j})
  \}_{i,j \in I}
  \subseteq \FiniteRankOps{H_{2}}
$.
\Cref{alg:bi-partite-cholesky:sig:article-graph-raj-dahya}
below captures the constructions in
\Cref{lemm:resolution:sig:article-graph-raj-dahya}
for the \highlightTerm{Choi\==Cholesky decomposition}
$
  \{
    L^{(\Phi)}_{i,j}
    = \hat{L}_{i,j}\:\sqrt{D}_{j,j}
    = \hat{L}^{(i)}_{i,j}\:D^{(j)}_{j,j}
  \}_{i,j \in I}
$
of $\Phi$.
As per the arguments in \Cref{lemm:resolution:sig:article-graph-raj-dahya},
we note that each $\hat{L}^{(n)}$ and $D^{(n)}$
may be obtained via the constructions in
\Cref{lemm:cholesky:existence:sig:article-graph-raj-dahya},
in particular
\eqcref{eq:cholesky-construction:1:sig:article-graph-raj-dahya}
and
\eqcref{eq:cholesky-construction:2:sig:article-graph-raj-dahya}.
Building on this, \Cref{alg:resolution:sig:article-graph-raj-dahya}
captures the construction in
\Cref{lemm:resolution:sig:article-graph-raj-dahya}
of the \highlightTerm{resolution}
$\{\zeta^{(\Phi)}_{i}\}_{i \in I}$
of $\Phi$.


\def\beweislabel{alg:bi-partite-cholesky:sig:article-graph-raj-dahya}
\begin{algorithm2e}[H]
  \caption{Choi\==Cholesky decomposition of CP FP\=/maps}
  \label{\beweislabel}
  \Inputs{
    Finitely many values of a CP FP\=/map
    $
      \{\Phi(\ElementaryMatrix{i}{j})\}_{i,j=1}^{n}
      \subseteq \FiniteRankOps{H_{2}}
    $
    for some index $n \in I$,
    satisfying in particular
    $
      \sum_{i,j=1}^{n}
      \ElementaryMatrix{i}{j} \otimes \Phi(\ElementaryMatrix{i}{j})
      \geq \zeromatrix
    $.
  }
  \Outputs{
    First $n \times n$ entries
    $\{L^{(\Phi)}_{i,j}\}_{i,j=1}^{n}$
    of the decomposition of $\Phi$.
  }
  \BlankLine
  \Fn{\AlgorithmChoiChol{$\{\Phi(\ElementaryMatrix{i}{j})\}_{i,j=1}^{n}$}}{
    \Line{
      Initialise
      $\{\hat{L}_{i,j}\}_{i,j=1}^{n} \coloneqq \{\delta_{ij}\onematrix\}_{i,j=1}^{n}$,
      $\{D_{i,i}\}_{i=1}^{n} \coloneqq \{\zeromatrix\}_{i=1}^{n}$,
      $\{L^{(\Phi)}_{i,j}\}_{i,j=1}^{n} \coloneqq \{\zeromatrix\}_{i,j=1}^{n}$.
    }
    \Line{
      Initialise
      $\{\hat{R}_{i,j}\}_{i,j=1}^{n} \coloneqq \{\delta_{ij}\onematrix\}_{i,j=1}^{n}$
      \Comment{
        will contain entries of $\hat{L}^{-1}$
      }
    }
    \ForEach{$i \in \{1,2,\ldots,n\}$}{
      \Comment{
        GOAL\textsubscript{1}:
        Update entries on row $i$ of $\hat{L}$, $D$, $L^{(\Phi)}$.
      }
      \ForEach{$j \in \{1,2,\ldots,i-1\}$}{
        \Line{
          Set
          $
            \hat{L}_{i,j}
            \coloneqq
            \sum_{k=1}^{i-1}
              \Phi(\ElementaryMatrix{i}{k})
              \:\hat{R}_{j,k}^{\ast}
              \:D_{j,j}^{\dagger}
          $
        }
        \Line{
          Set
            $
              L^{(\Phi)}_{i,j}
              \coloneqq
              \hat{L}_{i,j}\:L^{(\Phi)}_{j,j}
            $
        }
      }
      \BlankLine
      \Line{
        Set
        $
          D_{i,i}
          \coloneqq
          \Phi(\ElementaryMatrix{i}{i})
          -
          \sum_{j=1}^{i-1}
            L^{(\Phi)}_{i,j}
            \:(L^{(\Phi)}_{i,j})^{\ast}
        $
      }
      \Line{
        Set
        $L^{(\Phi)}_{i,i} \coloneqq \sqrt{D_{i,i}}$
      }
      \BlankLine
      \Comment{
        GOAL\textsubscript{2}:
        Update entries on row $i$ of $\hat{L}^{-1}$
      }
      \ForEach{$j \in \{1,2,\ldots,i-1\}$}{
        \LineNoNr{
          Set
          $
            \hat{R}_{i,j}
            \coloneqq
              -\sum_{k=j}^{i-1}
                \hat{L}_{i,k}
                \:\hat{R}_{k,j}
          $
        }
      }
    }
    \BlankLine
    \Return
      $\{L^{(\Phi)}_{i,j}\}_{i,j=1}^{n}$
  }
\end{algorithm2e}

The computation for Goal\textsubscript{2}
in \Cref{alg:bi-partite-cholesky:sig:article-graph-raj-dahya}
is justified by
\Cref{rem:compute-restrictions-of-inverse:sig:article-graph-raj-dahya}
and \eqcref{eq:lower-uni-tri-inverse:sig:article-graph-raj-dahya}.



\begin{algorithm2e}[H]
  \caption{Resolution of CP FP\=/maps}
  \label{alg:resolution:sig:article-graph-raj-dahya}
  \Inputs{
    Finitely many values of a CP FP\=/map
    $
      \{\Phi(\ElementaryMatrix{i}{j})\}_{i,j=1}^{n}
      \subseteq \FiniteRankOps{H_{2}}
    $
    for some index $n \in I$.
  }
  \Output{
    $n$-th element $\zeta^{(\Phi)}_{n} \in L^{2}(H_{1} \otimes H_{2}, H_{2})$
    of the resolution of $\Phi$.
  }
  \Fn{\AlgorithmRes{$\{\Phi(\ElementaryMatrix{i}{j})\}_{i,j=1}^{n}$}}{
    \BlankLine
    \Comment{
      Compute part of Choi-Cholesky decomposition
    }
    \Line{
      Set
      $
        \{L^{(\Phi)}_{i,j}\}_{i,j=1}^{n},
        \coloneqq
        \AlgorithmChoiChol(\{\Phi(\ElementaryMatrix{i}{j})\}_{i,j=1}^{n})
      $.
    }
    \BlankLine
    \Comment{
      Compute desired element of the resolution
    }
    \Line{
      Set
      $
        \zeta^{(\Phi)}_{n}
        \coloneqq
        \sum_{k=1}^{n}
          \bra{\BaseVector{k}}
          \otimes
          L^{(\Phi)}_{n,k}
      $
    }
    \BlankLine
    \Return
      $\zeta^{(\Phi)}_{n}$
  }
\end{algorithm2e}


Towards empirical verification,
\Cref{alg:bi-partite-cholesky:sig:article-graph-raj-dahya}
has been implemented in
\cite{Dahya2026CodebaseChoiCholesky}
in \href{https://www.python.org}{python} and \href{https://rust-lang.org}{rust}.



\documentpartnormal


\null


\paragraph{Acknowledgement.}
The author is grateful
  to Orr Shalit
    for constructive remarks on CPTP-maps,
  to Adalbert Fono
    for advice on computable analysis,
and
  to Andreas Maletti and Elias Zimmermann
    for helpful exchanges on computability.


\bibliographystyle{siam}
\def\bibname{References}
\bgroup

\egroup


\addresseshere
\end{document}
